\documentclass[10pt]{article}

\usepackage{amsmath}
\usepackage{amsfonts}
\usepackage{amsthm}
\usepackage{amssymb}

\newtheorem{Theorem}{Theorem}[section]
\newtheorem{Definition}[Theorem]{Definition}
\newtheorem{Proposition}[Theorem]{Proposition}
\newtheorem{Lemma}[Theorem]{Lemma}

\newtheorem{Remark}[Theorem]{Remark}

\numberwithin{equation}{section}

\def\bar{\overline}
\def\real{\mathbb{R}}
\def\nat{\mathbb{N}}

\def\liminfn{\liminf\limits_{n\rightarrow\infty}}
\def\limsupn{\limsup\limits_{n\rightarrow\infty}}

\newcommand{\dual}[2]{\left\langle #1 \right\rangle_{{#2}^*\times #2}}

%%%%%%%%%%%%%%%%%%%%

\begin{document}

\title{
Convergence of a Time Discretization for Nonlinear Second Order Inclusion\thanks{The research was
  supported by the Marie Curie International Research Staff
  Exchange Scheme Fellowship within the 7th European Community Framework Programme under Grant
  Agreement No. 295118,
  the International Project co-financed by the Ministry of Science and Higher Education of Republic of
  Poland under grant no. W111/7.PR/2012
  and the National Science Center of Poland under Maestro
  Advanced Project no. DEC-2012/06/A/ST1/00262.}}

\author{
 Krzysztof Bartosz$^{\,1}$, \ \
 Leszek Gasi\'nski$^{\,1}$\footnote{Corresponding author email: Leszek.Gasinski@ii.uj.edu.pl}, \ \
 Zhenhai Liu$^{\, 2}$ \ \ \\
 and Pawe{\l} Szafraniec$^{\,1}$, \\ ~ \\
{\small $^1$ Jagiellonian University, Faculty of Mathematics and Computer Science} \\
{\small ul. Lojasiewicza 6, 30348 Krakow, Poland} \\ ~ \\
{\small $^2$ College of Sciences, Guangxi University for Nationalities} \\
{\small Nanning, Guangxi 530006, Peoples Republic of China}
}

\maketitle

\thispagestyle{empty}

\

\noindent {\bf Abstract.}
  We study an abstract second order inclusion involving two nonlinear single-valued operators and a nonlinear multivalued term. Our goal is to
  establish the existence of solutions to the problem by applying numerical scheme based on time discretization. We show that the sequence of
  approximate solution converges weakly to a solution of the exact problem. We apply our abstract result to a dynamic, second order in time
  differential inclusion involving Clarke subdifferential of a locally Lipschitz, possibly nonconvex and nonsmooth potential. In two presented examples the Clarke
  subdifferential appears either in a source term or in a boundary term.

\vskip 4mm

\noindent {\bf Keywords:} time discretization; differential inclusions; nonconvex potential; weak solution; Rothe method

\vskip 4mm

\noindent {\bf 2010 Mathematics Subject Classification: 49J40, 47J20, 47J22}

\newpage

%%%%%%%%%%%%%%%%%%%%%%%%%%%%%%%%%%%
\section{Introduction}\label{Introduction}
In this paper we study the following inclusion problem
\begin{equation}\label{KB_17}
\begin{cases}
  u''(t)+A(t,u'(t))+B(t,u(t))+\gamma^*M(\gamma u' (t))\ni f(t)\\
  u(0)=u_0,\ u'(0)=v_0
\end{cases}
\end{equation}
and we deal with the existence of its solution in an appropriate function space. In the above problem, $A$ and $B$ are single-valued, nonlinear operators, $M$
is a multivalued term, $\gamma$ is a linear, continuous and compact operator and $\gamma^*$ denotes its adjoint operator. Our goal is to generalize the result
obtained in \cite{Emrichh_2010}, where the second order equation has been studied. In our case, because of the presence of multivalued term $M$, we need to
apply a technique taken from set valued analysis. Moreover, in \cite{Emrichh_2010}, the operator $A$ is assumed to be hemicontinuous and monotone with respect
to the second variable. In comparison to \cite{Emrichh_2010}, we assume that it is only pseudomonotone which allows to deal with a larger class of operators.
On the other hand, it forces to use more advanced approach. Similarly as in \cite{Emrichh_2010}, the operator $B$ is assumed to be a nonlinear perturbation of
a linear principal part $B_0$. Using the idea presented in \cite{Emrichh_2010}, we start with the following numerical scheme
\begin{eqnarray}\label{KB_18}
  &\frac{2}{\tau_{n+1}+\tau_n}\bigg(\frac{u^{n+1}-u^n}{\tau_{n+1}}-\frac{u^{n}-u^{n-1}}{\tau_{n}}\bigg)+A\bigg(t_n,\frac{u^{n+1}-u^n}{\tau_{n+1}}\bigg)\cr
  &+B(t_n,u^n)+\gamma^* M\bigg(\gamma\frac{u^{n+1}-u^n}{\tau_{n+1}}\bigg)\ni f^n,\quad n=1,...,N-1,
\end{eqnarray}
with initial condition. We obtain a solution $\{u^n\}$ applying an existence result for a corresponding elliptic inclusion in each fixed time step. To this
end, we use a surjectivity result for a pseudomonotone, coercive multivalued operator. Having the solution of the time-semidiscrete  problem (\ref{KB_18}), we
construct a sequence $u_\tau$ of piecewise constant functions in order to approximate a solution of \eqref{KB_17} and sequences $v_\tau$ and $\hat{v}_\tau$ of piecewise constant and piecewise linear functions  in order to approximate its time derivative. First,
using \textit{a priori} bounds in reflexive functional spaces, we obtain a weak limit for the approximate sequences. Then we pass to convergence analysis in
order to prove that the limit function  satisfies (\ref{KB_17}).

This kind of approach, known also as the Rothe method, has been used for solving many types of evolution partial differential equations or variational
inequalities. We refer to \cite{Roubicek2005} as for a basic handbook concerning this subject. The Rothe method for evolution inclusion has been applied first
in \cite{Kalita2013} and then developed in \cite{Bartosz_theta, bxkyz, Bartosz_Sofonea, Kalita2, Kalita3, Peng1, Peng2, Peng3}.

There are two main difficulties arising in our problem, both concern the analysis of convergence. The first issue comes from the fact that the operator $A$ is
assumed to be pseudomonotone with respect to the second variable, which is a relatively weak assumption in comparison to \cite{Emrichh_2010}. Moreover, we have
to provide an analogous property of its Nemytskii operator $\mathcal{A}$. To this end, we use Lemma \ref{kalita}, which requires to know that the considered
sequence of piecewise constant functions is bounded in space $M^{p,q}(0,T; W,W^*)$, thus, in particular that they have a bounded  total variation.

The second main difficulty appears when passing to the limit with multivalued term. In this part, we use the Aubin-Celina convergence theorem. However, to do
this, we need to have a strong convergence of the sequence $\gamma v_\tau$ in an appropriate space, where the functions are piecewise constant, and in a
consequence, their time derivatives are not regular enough to apply classical Lions-Aubin compactness results in our case. Thus, we apply more general result
of Lemma \ref{kalita_compactness}, which requires only that functions $v_\tau$ have bounded total variations, instead of bounded time derivative in a space of
type $L^q$ with respect to time. We remark that the compactness of the operator $\gamma$ is a crucial assumption, which allows to use Lemma
\ref{kalita_compactness}. In examples $\gamma$ is either the compact embedding $W_0^{1,p}(\Omega)\subset L^p(\Omega)$ or the trace operator $\gamma:
W^{1,p}(\Omega)\to L^p(\partial \Omega)$.

The rest of the paper is organized as follows. In Section \ref{Preliminaries}, we introduce basic definitions and recall some useful results. In Section
\ref{Statement}, we formulate an abstract problem and establish assumptions on its data. In Section \ref{Discrete_problem}, we state a discrete problem and
obtain \textit{a priori} estimates on its solution. In Section \ref{Convergence_of_numerical_scheme}, we study convergence of solutions of the discrete problem
to a solution of exact one. Finally, in Section \ref{Examples}, we show two examples for which our main result is applicable.

%%%%%%%%%%%%%%%%%%%%%%%%%%%%%%%%%%%

\section{Preliminaries}\label{Preliminaries}
In this section we introduce the basic definitions and recall results used in the sequel. We start with the definition of a pseudomonotone operator in both
single valued and multivalued case.

\begin{Definition}  Let $X$ be a real Banach space. A single valued operator $A\colon X\to X^*$ is called pseudomonotone, if
  for any sequence $\{v_n\}_{n=1}^\infty\subset X$ such that $v_n\to v$ weakly in $X$ and $\limsupn\langle Av_n, v_n-v\rangle\leqslant 0$ we have
  $\langle Av, v-y\rangle \leqslant \liminfn\langle Av_n, v_n-y\rangle$ for every $y\in X$.
\end{Definition}

 \begin{Definition}
 Let $X$ be a real Banach space. The multivalued operator $A\colon X\to 2^{X^*}$ is called pseudomonotone if the following conditions hold:
\begin{itemize}
  \item[1)] $A$ has values which are nonempty, weakly compact and convex,
  \item[2)] $A$ is upper semicontinuous from every finite dimensional subspace of $X$ into $X^{\ast}$ furnished with weak topology,
  \item[3)] if $\{v_n\}_{n=1}^\infty\subset X$ and $\{v_n^*\}_{n=1}^\infty\subset X^*$ are two sequences such that
  $v_n \rightarrow v$ weakly in $X$, $v_n^{\ast} \in A(v_n)$ for all $n\geqslant 1$ and
  $\limsupn \langle v_n^\ast, v_n - v \rangle \leqslant 0$,
  then for every $y \in X$ there exists $u(y) \in A(v)$ such that
  $\langle u(y), v - y \rangle \leqslant \liminfn \langle v_n^{\ast}, v_n -
  y\rangle$.
\end{itemize}
\end{Definition}
Now we recall two important results concerning properties of pseudomonotone operators.
\begin{Proposition}\label{prop:sum_pseudo}
  Assume that  $X$ is a reflexive Banach space and
  $A_1, A_2\colon X\to 2^{X^*}$ are pseudomonotone operators.
  Then  $A_1+A_2\colon X\to 2^{X^*}$
  is a pseudomonotone operator.
\end{Proposition}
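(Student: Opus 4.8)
The plan is to verify the three conditions in the definition of a multivalued pseudomonotone operator for $A:=A_1+A_2$, where $(A_1+A_2)(v):=A_1(v)+A_2(v)=\{\xi_1+\xi_2:\xi_i\in A_i(v),\ i=1,2\}$. Conditions 1) and 2) are soft, and the real content sits in condition 3).

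For 1), a sum of nonempty convex sets is nonempty and convex, and $A_1(v)+A_2(v)$ is weakly compact because it is the image of the set $A_1(v)\times A_2(v)$ — weakly compact by Tychonoff's theorem — under the addition map $X^*\times X^*\to X^*$, which is linear and continuous, hence weakly continuous. For 2), this is the standard fact that the sum of two upper semicontinuous multifunctions with compact values is again upper semicontinuous: fixing a finite dimensional subspace $V\subseteq X$, a point $v_0\in V$ and a weakly open set $U\supseteq A_1(v_0)+A_2(v_0)$, a routine compactness argument in the (Hausdorff, locally convex) space $X^*\times X^*$ produces weakly open sets $U_i\supseteq A_i(v_0)$ with $U_1+U_2\subseteq U$, and then upper semicontinuity of each $A_i$ on $V$ gives a neighbourhood $W$ of $v_0$ in $V$ with $A_i(v)\subseteq U_i$ for all $v\in W$, so that $(A_1+A_2)(v)\subseteq U$ there.

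For 3), take $v_n\to v$ weakly in $X$ and $\xi_n\in(A_1+A_2)(v_n)$ with $\limsupn\langle\xi_n,v_n-v\rangle\leqslant 0$, and decompose $\xi_n=\xi_n^1+\xi_n^2$ with $\xi_n^i\in A_i(v_n)$. The crucial step — and the one I expect to be the main obstacle — is to establish the \emph{separate} estimates $\limsupn\langle\xi_n^i,v_n-v\rangle\leqslant 0$ for $i=1,2$, since a priori only the sum of these two quantities is known to be asymptotically nonpositive. I would argue by contradiction: if, say, $\limsupn\langle\xi_n^1,v_n-v\rangle>0$, pass to a subsequence along which $\langle\xi_n^1,v_n-v\rangle$ converges to some $m_1\in(0,+\infty]$ and $\langle\xi_n,v_n-v\rangle$ converges to some $\ell\in[-\infty,0]$; then $\langle\xi_n^2,v_n-v\rangle\to\ell-m_1<0$ along this subsequence, so in particular $\limsup\langle\xi_n^2,v_n-v\rangle\leqslant 0$, and applying condition 3) for $A_2$ with the test element $y=v$ yields some $u\in A_2(v)$ with $0=\langle u,v-v\rangle\leqslant\liminf\langle\xi_n^2,v_n-v\rangle=\ell-m_1<0$, a contradiction. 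The case $i=2$ is symmetric.

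Granting the two separate estimates, the conclusion is immediate: fix $y\in X$; condition 3) for $A_i$ provides $u_i(y)\in A_i(v)$ with $\langle u_i(y),v-y\rangle\leqslant\liminfn\langle\xi_n^i,v_n-y\rangle$, and setting $u(y):=u_1(y)+u_2(y)\in(A_1+A_2)(v)$ and using that $\liminf$ is superadditive,
\[
\langle u(y),v-y\rangle\ \leqslant\ \liminfn\langle\xi_n^1,v_n-y\rangle+\liminfn\langle\xi_n^2,v_n-y\rangle\ \leqslant\ \liminfn\langle\xi_n,v_n-y\rangle ,
\]
which is exactly condition 3) for $A_1+A_2$. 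Apart from the contradiction argument for the componentwise $\limsup$ bounds, every step is bookkeeping with the definitions and elementary properties of $\limsup$ and $\liminf$.
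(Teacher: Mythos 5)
Your proof is correct. The paper states this proposition without proof, as a recalled classical result, and your argument is precisely the standard one from the literature: the only nontrivial point is the componentwise estimate $\limsup_n\langle\xi_n^i,v_n-v\rangle\leqslant 0$, which you obtain by the usual contradiction argument (testing condition 3) for the \emph{other} operator at $y=v$ along a suitable subsequence), and the remaining steps, including the extended-real bookkeeping and the fact that condition 3) itself forces the relevant $\liminf$'s to be finite from below so that superadditivity applies, are handled correctly.
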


\begin{Theorem} \label{pseudo}
  Let $X$ be a reflexive Banach space and
  let $A\colon X\to 2^{X^*}$ be a pseudomonotone and coercive operator. Then $A$ is surjective, i.e. $R(A)=X^*$.
\end{Theorem}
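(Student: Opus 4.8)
The plan is to combine a Galerkin (finite--dimensional) approximation with a set--valued surjectivity lemma in finite dimensions, and then to pass to the limit exploiting the pseudomonotonicity of $A$.

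First I would fix $f\in X^*$ and observe that it suffices to find $u\in X$ with $f\in A(u)$. Replacing $A$ by the operator $v\mapsto A(v)-f$ — which again satisfies 1)--3) and remains coercive, since the quotient $\inf_{\xi\in A(v)}\langle\xi,v\rangle/\|v\|_X$ is only shifted by the term $-\langle f,v\rangle/\|v\|_X$, bounded by $\|f\|_{X^*}$ — I reduce the problem to proving $0\in A(u)$ for some $u$. Then I choose an increasing sequence $\{X_n\}$ of finite--dimensional subspaces of $X$ with $\overline{\bigcup_n X_n}=X$ (a net of such subspaces if $X$ is not separable), write $j_n\colon X_n\hookrightarrow X$ for the embedding and $j_n^*\colon X^*\to X_n^*$ for its adjoint, and set $A_n:=j_n^*\circ A\circ j_n\colon X_n\to 2^{X_n^*}$. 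By 1) and the continuity of $j_n^*$, $A_n$ has nonempty, convex, compact values; by 2), $A_n$ is upper semicontinuous (on the finite--dimensional $X_n$ the weak and norm topologies of $X_n^*$ coincide). Coercivity of $A$ provides a radius $R>0$, \emph{independent of $n$}, such that $\langle\xi,v\rangle>0$ whenever $\|v\|_X=R$ and $\xi\in A(v)$; restricting to $v\in X_n$ this gives $\langle\eta,v\rangle>0$ for $\eta\in A_n(v)$, $\|v\|_X=R$. A standard finite--dimensional surjectivity lemma (a consequence of Kakutani's theorem / Brouwer's degree: an u.s.c. map $\Phi\colon\{\|x\|\le R\}\subset V\to 2^{V^*}$ with nonempty convex compact values and $\langle\Phi(x),x\rangle\subset[0,\infty)$ on $\|x\|=R$ has a zero in the ball) then yields $u_n\in X_n$ with $\|u_n\|_X\le R$ and $u_n^*\in A(u_n)$ such that $\langle u_n^*,v\rangle=0$ for all $v\in X_n$; in particular $\langle u_n^*,u_n\rangle=0$.

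For the passage to the limit I first recall that a pseudomonotone operator maps bounded sets into bounded sets, so $\{u_n^*\}$ is bounded in $X^*$; by reflexivity of $X$ (hence of $X^*$) I may assume, along a subsequence, $u_n\rightharpoonup u$ in $X$ and $u_n^*\rightharpoonup u^*$ in $X^*$. For fixed $v\in X_m$ and $n\ge m$ one has $\langle u_n^*,v\rangle=0$, hence $\langle u^*,v\rangle=0$, and by density $u^*=0$. Consequently $\langle u_n^*,u_n-v\rangle=-\langle u_n^*,v\rangle\to0$ for every $v\in X$, so in particular $\limsup_n\langle u_n^*,u_n-u\rangle=0\le0$. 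Condition 3) of pseudomonotonicity, applied to $\{u_n\}$ and $\{u_n^*\}$, then gives: for every $y\in X$ there is $\xi(y)\in A(u)$ with $\langle\xi(y),u-y\rangle\le\liminf_n\langle u_n^*,u_n-y\rangle=0$. To deduce $0\in A(u)$, suppose the contrary; since $A(u)$ is nonempty, convex and weakly compact, strong separation in the duality $\langle X^*,X\rangle$ (here reflexivity is used so that the separating functional can be taken in $X$) produces $z\in X$ and $c>0$ with $\langle\xi,z\rangle\ge c$ for all $\xi\in A(u)$. Choosing $y=u-z$ in the previous inequality yields $\langle\xi(y),z\rangle\le0$, a contradiction. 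Hence $0\in A(u)$, which after the reduction means $f\in R(A)$, and $f$ was arbitrary.

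The step I expect to be the main obstacle is the finite--dimensional one: verifying that the restriction $A_n$ keeps the structure needed to run a Kakutani/degree argument, and isolating the $n$--independent radius $R$ from coercivity. The other genuinely nontrivial ingredient is the fact — standard but not obvious — that pseudomonotone operators are bounded, since this is exactly what makes the auxiliary sequence $\{u_n^*\}$ weakly convergent.
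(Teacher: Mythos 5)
The paper does not actually prove this theorem: it is quoted as a known Browder--Hess type surjectivity result, so there is no in-paper argument to compare with. Your Galerkin-plus-Kakutani architecture is the standard route for it, and most of the individual steps are sound: the reduction to $0\in A(u)$, the acute-angle lemma in $X_n$ (note $j_n^*$ is weak-to-norm continuous into the finite-dimensional $X_n^*$, so $A_n$ really is u.s.c.\ with compact convex values), the $n$-independent radius $R$ from coercivity, and the final separation argument, which correctly uses reflexivity to place the separating functional in $X$.

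The genuine gap is the boundedness of $\{u_n^*\}$. You justify it by the claim that a pseudomonotone operator maps bounded sets into bounded sets, but this does not follow from conditions 1)--3) of the paper's definition. Condition 2) (upper semicontinuity from finite-dimensional subspaces into $X^*$ with the weak topology), together with the weak compactness of the values, only shows that $A$ restricted to a \emph{fixed} finite-dimensional subspace carries compact sets to weakly compact, hence bounded, sets; the bound depends on the subspace and gives no uniform control of $A(\bar{B}_R\cap X_n)$ as $n$ grows. Without a uniform bound on $\|u_n^*\|_{X^*}$ you cannot extract a weak limit, cannot identify it as $0$ via your density argument, and cannot even verify the hypothesis $\limsup_n\langle u_n^*,u_n-u\rangle\leqslant 0$ of condition 3): since $u$ need not belong to any $X_m$, one must write $\langle u_n^*,u_n-u\rangle=\langle u_n^*,u_n-y_n\rangle+\langle u_n^*,y_n-u\rangle$ with $y_n\in X_n$, and the second term is controlled only by $\|u_n^*\|_{X^*}\|y_n-u\|_X$. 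This is precisely why the classical formulations of the theorem (Brezis; Zeidler, Theorem~27.A; Roub\'{\i}\v{c}ek, who builds boundedness into the very definition of pseudomonotonicity) carry boundedness of $A$, or a quasi-boundedness lemma, as an explicit ingredient. The cleanest repair is to add the hypothesis that $A$ is bounded; this costs nothing for the paper, since the operator $Tv=\tau_{n+\frac{1}{2}}^{-1}v+A(t_n,v)+\gamma^*M(\gamma v)$ to which the theorem is applied is bounded by $H(A)$(ii) and $H(M)$(iii). If you want the statement without boundedness, you must supply the missing quasi-boundedness argument rather than cite it as standard.
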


Let $X$ be a Banach space and let $I=(0,T)$ be a time interval. We introduce the space $BV(I;X)$ of functions of bounded total variation on $I$. Let $\pi$
denote any finite partition of $I$ by a family of disjoint subintervals $\{\sigma_i = (a_i,b_i)\}$ such that $\bar{I} = \cup_{i=1}^n \bar{\sigma}_i$. Let
$\mathcal{F}$ be the family of all such partitions. Then for a function $x\colon I\to X$ we define its total variation by
\[
  \| x \|_{BV(I;X)} = \sup_{\pi \in \mathcal{F}} \bigg\{  \sum_{\sigma_i \in \pi} \|x(b_i)-x(a_i)\|_X \bigg\}.
\]
As a generalization of the above definition, for $1 \leqslant q < \infty$, we define a seminorm
\[
  \| x \|^q_{BV^q (I;X)} = \sup_{\pi \in \mathcal{F}} \bigg\{ \sum_{\sigma_i \in \pi} \|x(b_i)-x(a_i)\|_X^q \bigg\}
\]
and the space
\[
  BV^q(I;X)=\{x\colon I\to X;\ \| x \|_{BV^q (I;X)}<\infty\}.
\]
For $1 \leqslant p \leqslant \infty$, $1\leqslant q < \infty$ and Banach spaces $X$ and $Z$ such that $X \subset Z$, we introduce the following space
\[
   M^{p,q}(I;X,Z) = L^p(I;X) \cap BV^q (I;Z).
\]
Then $M^{p,q}(I; X,Z)$ is also a Banach space with the norm given by $\| \cdot \|_{L^p(I;X)} + \| \cdot \|_{BV^q(I;Z)}$.

 Finally, we recall a compactness result, which will be used in the sequel. For its proof, we refer to \cite{Kalita2013}.

\begin{Proposition}\label{kalita_compactness}
Let $1\leqslant p,q<\infty$. Let $X_1\subset X_2\subset X_3$ be real Banach spaces such that $X_1$ is reflexive, the embedding $X_1\subset X_2$ is compact and
the embedding $X_2\subset X_3$ is continuous. Then the embedding $M^{p,q}(0,T;X_1;X_3)\subset L^p(0,T;X_2)$ is compact.
\end{Proposition}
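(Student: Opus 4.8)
This is a version of the Aubin--Lions--Simon compactness lemma in which the customary bound on the distributional time derivative is replaced by the weaker bound on the $q$-total variation, so the plan is to run the classical argument: generate compactness by averaging in time, and transfer it from $X_2$ to $X_3$ by an Ehrling inequality. Let $\mathcal B\subset M^{p,q}(0,T;X_1,X_3)$ be bounded, say $\|u\|_{L^p(0,T;X_1)}+\|u\|_{BV^q(0,T;X_3)}\leqslant R$ for all $u\in\mathcal B$; I must show $\mathcal B$ is relatively compact in $L^p(0,T;X_2)$, and since this space is complete it suffices to show $\mathcal B$ is totally bounded there.

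The two ingredients I would prepare first are: (i) \emph{Ehrling's inequality} --- from compactness of $X_1\hookrightarrow X_2$ and continuity of $X_2\hookrightarrow X_3$, for every $\varepsilon>0$ there is $C_\varepsilon>0$ with $\|w\|_{X_2}\leqslant\varepsilon\|w\|_{X_1}+C_\varepsilon\|w\|_{X_3}$ on $X_1$, hence, integrating in time, $\|v\|_{L^p(0,T;X_2)}\leqslant\varepsilon\|v\|_{L^p(0,T;X_1)}+C_\varepsilon\|v\|_{L^p(0,T;X_3)}$ for $v\in L^p(0,T;X_1)$; and (ii) for the $BV^q$-representative of $u$, the nondecreasing variation function $\Phi_u(t)=\|u\|^q_{BV^q((0,t);X_3)}$ satisfies $\Phi_u(T)\leqslant R^q$ and, by superadditivity of the variation, $\|u(t)-u(s)\|^q_{X_3}\leqslant\Phi_u(t)-\Phi_u(s)$ for $0\leqslant s<t\leqslant T$.

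The core is the averaging operator: for $h>0$ let $\Pi_h u$ be constant on each $I_k=(kh,(k+1)h)\cap(0,T)$ with value the Bochner mean $c_k=|I_k|^{-1}\int_{I_k}u\in X_1$. I would then prove \textbf{(A)}: for fixed $h$, the set $\{\Pi_h u:u\in\mathcal B\}$ is relatively compact in $L^p(0,T;X_2)$ --- the means obey $\|c_k\|_{X_1}\leqslant h^{-1/p}R$, so they range in a bounded, hence (by compactness of $X_1\hookrightarrow X_2$) $X_2$-precompact, subset of $X_1$, and $\Pi_h u$ depends continuously on the resulting finite tuple of coefficients; and \textbf{(B)}: $\sup_{u\in\mathcal B}\|u-\Pi_h u\|_{L^p(0,T;X_2)}\to0$ as $h\to0^+$. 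For (B), Ehrling's inequality bounds the $X_2$-norm by $\varepsilon\|u-\Pi_h u\|_{L^p(0,T;X_1)}+C_\varepsilon\|u-\Pi_h u\|_{L^p(0,T;X_3)}$, the first term being $\leqslant2\varepsilon R$ since averaging contracts the $L^p(0,T;X_1)$-norm (Jensen); and the pointwise estimate $\|u(t)-c_k\|^q_{X_3}\leqslant\Phi_u((k{+}1)h)-\Phi_u(kh)$ for a.e.\ $t\in I_k$ gives, on summation, $\|u-\Pi_h u\|_{L^q(0,T;X_3)}\leqslant R\,h^{1/q}$. If $q\geqslant p$ this already controls the $L^p(0,T;X_3)$-norm on the bounded interval; if $q<p$ one writes $\|u-\Pi_h u\|^p_{X_3}=\|u-\Pi_h u\|^{p-q}_{X_3}\,\|u-\Pi_h u\|^q_{X_3}$, estimates the first factor by H\"older using the extra integrability $u\in L^p(0,T;X_1)\hookrightarrow L^p(0,T;X_3)$ and the second by the variation bound, arriving at $\|u-\Pi_h u\|_{L^p(0,T;X_3)}\leqslant C(R)\,h^{q/p^2}$, uniformly over $\mathcal B$. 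A standard $\varepsilon/3$-type argument then combines (A) and (B): cover the precompact set $\{\Pi_h u:u\in\mathcal B\}$ by finitely many small balls and enlarge them, using (B) to choose $\varepsilon$ and $h$ first; this gives total boundedness of $\mathcal B$ in $L^p(0,T;X_2)$.

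The step I expect to be the real obstacle is (B) in the regime $q<p$: bounded $q$-variation only yields an $O(h^{1/q})$ bound on the $L^q(0,T;X_3)$-discretization error, and this has to be upgraded to an $o(1)$ bound on the $L^p(0,T;X_3)$-error, which is possible precisely because of the extra integrability coming from $u\in L^p(0,T;X_1)$ and requires the H\"older-interpolation device just indicated; the rest of the proof is bookkeeping. If one prefers, steps (A)--(B) can be replaced by a direct verification of Simon's relative-compactness criterion in $L^p(0,T;X_2)$ --- boundedness, $X_2$-precompactness of the time-integrals $\int_{t_1}^{t_2}u$, and equismallness of the $X_2$-translates $u(\cdot+h)-u$ --- the last of which again reduces, through Ehrling's inequality and the same variation estimate, to the $q<p$ bound described above.
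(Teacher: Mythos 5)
The paper offers no proof of this proposition---it simply defers to \cite{Kalita2013}---so there is nothing internal to compare against; your argument (time-averaging onto a mesh of width $h$, Ehrling's inequality to transfer compactness from $X_3$ to $X_2$, and the $BV^q$ seminorm standing in for the time-derivative bound of the classical Aubin--Lions--Simon lemma) is exactly the natural route, and ingredients (i), (ii), step (A), and the $q\geqslant p$ half of step (B) are correct as written. (Incidentally, your proof never uses reflexivity of $X_1$; that hypothesis is not needed for this argument.)

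The one step that fails as stated is the H\"older device for $q<p$, i.e.\ precisely the step you flagged as the obstacle. Knowing only that $u-\Pi_h u$ is \emph{bounded} in $L^p(0,T;X_3)$ and \emph{small} in $L^q(0,T;X_3)$ can never yield smallness in $L^p(0,T;X_3)$: in the splitting $\int\|v\|_{X_3}^{p-q}\|v\|_{X_3}^{q}$, any H\"older pairing whose first factor is controlled by the $L^p$ bound forces the second factor to involve $\int\|v\|_{X_3}^{qa'}$ with $qa'\geqslant p$, so the $L^q$ smallness never enters; and indeed a concentrating sequence such as $v_n=n^{1/p}\chi_{(0,1/n)}$ is bounded in $L^p$, null in every $L^q$ with $q<p$, yet has constant $L^p$ norm, so the claimed bound $C(R)h^{q/p^2}$ cannot be reached by that route. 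Fortunately the repair is already contained in your own pointwise estimate: $\|u(t)-c_k\|_{X_3}\leqslant\delta_k^{1/q}$ for a.e.\ $t\in I_k$, with $\delta_k=\Phi_u((k+1)h\wedge T)-\Phi_u(kh)$, is an $L^\infty(I_k;X_3)$ bound, not merely an $L^q$ one. Hence $\int_{I_k}\|u(t)-c_k\|_{X_3}^{p}\,dt\leqslant h\,\delta_k^{p/q}$, and since $p/q\geqslant1$ gives $\sum_k\delta_k^{p/q}\leqslant\bigl(\sum_k\delta_k\bigr)^{p/q}\leqslant R^{p}$, you obtain $\|u-\Pi_h u\|_{L^p(0,T;X_3)}\leqslant R\,h^{1/p}$ uniformly on $\mathcal B$, with no interpolation at all; together with your $L^q\subset L^p$ observation for $q\geqslant p$, this closes step (B) and, with it, the proof.
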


\section{Problem statement}\label{Statement}
  In this section we formulate an abstract problem and give a list of assumptions concerning the data of the problem.
  For a Banach space $X$ by $X^*$ we denote its topological dual,
  by $\dual{\cdot,\cdot}{X^*\times X}$ the duality pairings for the pair $(X,X^*)$ and
  by $i_{XY}\colon X\to Y$ we will denote the embedding operators of $X$ into $Y$ provided that $X\subseteq Y$.

  First we introduce appropriate spaces.
  Let $(W,\|\cdot\|_W)$ be a reflexive Banach space densely and continuously embedded in a reflexive Banach space
  $(V,\|\cdot\|_V)$, and let $(V,\|\cdot\|_V)$
  be densely and continuously embedded in a Hilbert space $(H,(\cdot, \cdot), |\cdot|)$. We also assume that the embedding $W\subseteq H$ is compact. We have
\[
   W\subseteq V \subseteq H\subseteq V^* \subseteq W^*,
\]
where $V^*$ and $W^*$ denote the dual spaces to $V$ and $W$, respectively. Let $(U,\|\cdot\|_U)$ be a Banach space such that there exists a compact mapping
$\gamma\colon W\to U$. For $T>0$, $p\geqslant 2$ we define the spaces $\mathcal{W}=L^p(0,T;W)$, $\mathcal{V}=L^p(0,T;V)$, ${\mathcal{H}}=L^2(0,T;H)$,
$\mathcal{U}=L^p(0,T;U)$. We knot that their dual spaces are $\mathcal{W}^*=L^q(0,T;W^*)$, $\mathcal{V}^*=L^q(0,T;V^*)$, ${\mathcal{U}}^*=L^q(0,T;U^*)$,
respectively (where $\frac{1}{p} + \frac{1}{q} =1$). We identify the space $\mathcal{H}$ with its dual and denote by $(\cdot,\cdot)_\mathcal{H}$ the scalar
product in $\mathcal{H}$.

We are concerned with the following problem.\\

\noindent {\bf Problem {${\mathcal{P}}$}}. Find $u\in \mathcal{W}$ with $u'\in\mathcal{W}$ and $u''\in\mathcal{W}^*$ such that
\begin{eqnarray}
 && \!\!\!\!\!\!\!\!\!\!  u''(t) + A(t,u'(t)) + B(t,u(t)) + \gamma^* M(\gamma u'(t)) \ni f(t)\ \mbox{a.e.}\ {t\in (0,T)},  \label{original1} \\
 && \!\!\!\!\!\!\!\!\!\!  u(0)=u_0, \quad
u'(0)=v_0.\label{original2}
\end{eqnarray}

\noindent A solution of {\bf Problem {${\mathcal{P}}$}} will be understood in the following sense.

\begin{Definition}\label{def_solution}
The function $u\in{\mathcal{W}}$ is said to be a solution of Problem {${\mathcal{P}}$} if $u'\in\mathcal{W}$, $u''\in\mathcal{W}^*$, $u$ satisfies
\eqref{original2}, and there exists a function $\eta\in {\mathcal{U}^*}$ such that
\begin{eqnarray}
 && \!\!\!\!\!\!\!\!\!\!  u''(t) + A(t,u'(t)) + B(t,u(t)) + \gamma^*\eta(t)= f(t)\ \mbox{a.e.}\ {t\in (0,T)},  \label{original3} \\
 && \!\!\!\!\!\!\!\!\!\!  \eta(t)\in M(\gamma u'(t))\ \mbox{a.e.}\ {t\in (0,T)}.\label{original4}
\end{eqnarray}
\end{Definition}

We impose the following assumptions on the data of Problem {$\mathcal{P}$}.\\

\noindent ${\underline{H(A)}:} A\colon [0,T]\times W\to W^*$ is such that
\begin{itemize}
  \item[(i)] for all $v\in W$, the mapping $t\to A(t,v)$ is continuous,
  \item[(ii)] $\|A(t,v)\|_{W^*} \leqslant \beta_A \big(1+\|v\|_W^{p-1}\big)$ for a.e. $t\in (0,T)$, all $v\in W$ with $\beta_A >0$,
  \item[(iii)] $\langle A(t,v),v\rangle_{W^*\times W} \geqslant \mu_A \|v\|_W^p -\beta |u|^2 -\lambda$
             for all $v\in W$ with $\mu_A >0$, ${\beta,\lambda \in \mathbb{R}}$,
  \item[(iv)] $v\to A(t,v)$ is pseudomonotone for all $t\in[0,T]$.
\end{itemize}

We assume that $B\colon [0,T]\times V\to W^*$ has a decomposition $B(t,v)=B_0(v) + C(t,v)$, where\\

\noindent ${\underline{H(B_0)}:} B_0\colon \in\mathcal{L}(V,V^*)$ is symmetric and strongly positive, with constants $\mu_B,\beta_B>0$ such that
\[
  \langle B_0 v,v\rangle \geqslant \mu_B \|v\|_V^2, \qquad \|B_0 v\| \leqslant \beta_B \|v\|_V.
\]

\noindent ${\underline{H(C)}:} C\colon [0,T]\times V\to W^*$ is such that
\begin{itemize}
  \item[(i)] for all $v\in V$, the function $t\to C(t,v)$ is continuous,
  \item[(ii)] $\|C(t,v)\|_{W^*} \leqslant \beta_C(1+\|v\|_V^{\frac{2}{q}})$  for a.e. $t\in (0,T)$, all $v\in V$ with $\beta_C >0$,
  \item[(iii)] $\|C(t,v)-C(t,w)\|_{W^*}\leqslant \alpha(\max(\|v\|_V,\|w\|_V))|v-w|^{\frac{1}{q}}$
              for all $t\in [0,T]$, all $v,w\in V$, where $\alpha\colon \mathbb{R}_+ \to \mathbb{R}_+ $ is a monotonically increasing function.
\end{itemize}

\noindent ${\underline{H(M)}:} M\colon U\to 2^{U^*}$ is such that

\begin{itemize}
 \item[(i)] for all $u\in U$, $M(u)$ is a nonempty, closed and convex set,
 \item[(ii)] $M$ is upper semicontinuous in $(s\textrm{-}U\times w\textrm{-}U^*)$-topology,
 \item[(iii)] $\|\eta\|_{U^*} \leqslant c_M(1+\|w\|_U^{p-1})$ for all $w\in U$, all $\eta \in M(w)$.
\end{itemize}

\noindent ${\underline{H(f)}}$ $f\in {\mathcal{W}}^*$.\\

\noindent ${\underline{H(\gamma)}}$: $\gamma\colon  W\to U$ is linear, continuous and compact and its Nemytskii operator $\bar{\gamma}\colon M^{p,q}(0,T;W,W^*)\to L^q(0,T;U^*)$ is compact.\\

\noindent ${\underline{H_0}:} \mu_A>c_M\|\gamma\|_{\mathcal{L}(W,U)}^p$.\\

Now, we provide a result concerning pseudomonotonicity of the superposition $\gamma^* M(\gamma)$.

\begin{Lemma}\label{lemma_1}
Let the multivalued operator $M\colon U\to 2^{U^*}$ satisfy assumption $H(M)$ and the operator $\gamma\colon W\to U$ be linear, continuous and compact. Then
the operator $W\ni v\to\gamma^*M(\gamma u)\in W^*$ is pseudomonotone.
\end{Lemma}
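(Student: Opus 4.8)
The plan is to verify the three defining properties of a multivalued pseudomonotone operator for the composition $N\colon W\to 2^{W^*}$ defined by $N(v)=\gamma^*M(\gamma v)$. First I would check property 1): since $M(\gamma v)$ is nonempty, closed and convex in $U^*$ by $H(M)(i)$, and $\gamma^*\colon U^*\to W^*$ is linear and continuous (being the adjoint of the continuous $\gamma$), the image $\gamma^*M(\gamma v)$ is the continuous linear image of a convex set, hence convex and nonempty. For weak compactness I would use that $M(\gamma v)$ is bounded in $U^*$ by the growth condition $H(M)(iii)$, hence (being closed and convex) weakly compact by reflexivity-type arguments or directly bounded-closed-convex in a dual space; then $\gamma^*$ is weak-to-weak continuous, so $\gamma^*M(\gamma v)$ is weakly compact in $W^*$.

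For property 2), upper semicontinuity from a finite-dimensional subspace of $W$ into $W^*$ with the weak topology: given a finite-dimensional $F\subset W$, the map $v\mapsto\gamma v$ restricted to $F$ is continuous (strongly) into $U$; composing with $M$, which is u.s.c. in the $(s\text{-}U\times w\text{-}U^*)$-topology by $H(M)(ii)$, gives that $v\mapsto M(\gamma v)$ is u.s.c. from $F$ into $w\text{-}U^*$; finally $\gamma^*$ is continuous from $w\text{-}U^*$ to $w\text{-}W^*$, and composition of an u.s.c. multifunction with a continuous single-valued map preserves upper semicontinuity, so $v\mapsto\gamma^*M(\gamma v)$ is u.s.c. from $F$ into $w\text{-}W^*$.

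Property 3) is where the compactness of $\gamma$ is essential and where I expect the main work to lie. Suppose $v_n\rightharpoonup v$ in $W$ and $v_n^*\in\gamma^*M(\gamma v_n)$ with $\limsup_n\langle v_n^*,v_n-v\rangle_{W^*\times W}\leqslant 0$. Write $v_n^*=\gamma^*\eta_n$ with $\eta_n\in M(\gamma v_n)$. Since $\gamma$ is compact, $\gamma v_n\to\gamma v$ strongly in $U$; by $H(M)(iii)$ the sequence $\{\eta_n\}$ is bounded in the reflexive space $U^*$, so (along a subsequence) $\eta_n\rightharpoonup\eta$ weakly in $U^*$; by upper semicontinuity $H(M)(ii)$ together with the convexity and closedness of the values (a standard closedness-of-graph argument for u.s.c. maps with weakly compact convex values, using Mazur's lemma) one gets $\eta\in M(\gamma v)$. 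Then for any $y\in W$, $\langle\gamma^*\eta,v-y\rangle_{W^*\times W}=\langle\eta,\gamma v-\gamma y\rangle_{U^*\times U}=\lim_n\langle\eta_n,\gamma v_n-\gamma y\rangle_{U^*\times U}=\lim_n\langle v_n^*,v_n-y\rangle_{W^*\times W}$, where the middle equality uses $\eta_n\rightharpoonup\eta$ in $U^*$ and $\gamma v_n\to\gamma v$ strongly in $U$ (so the pairing converges — a weak-times-strong limit). Taking $u(y):=\gamma^*\eta\in\gamma^*M(\gamma v)$, which is independent of $y$, the required inequality $\langle u(y),v-y\rangle\leqslant\liminf_n\langle v_n^*,v_n-y\rangle$ holds with equality. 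The main obstacle is the rigorous justification that $\eta\in M(\gamma v)$: one must invoke that an u.s.c. multifunction with closed convex values into a space with the weak topology has weakly-sequentially-closed graph when the domain sequence converges strongly, which is where the precise form of $H(M)(ii)$ is used; alternatively one appeals to the Aubin–Cellina convergence theorem mentioned in the introduction, but a direct argument via Mazur's lemma is cleanest here.
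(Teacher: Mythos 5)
The paper itself gives no argument for this lemma: it only refers the reader to Proposition 5.6 of \cite{HMSBOOK}, so there is no in-paper proof to compare against. Your direct verification of the three conditions in the definition of a multivalued pseudomonotone operator is the standard argument that such a reference contains, and its skeleton is sound: convexity and nonemptiness of the values pass through the linear map $\gamma^*$; upper semicontinuity on finite-dimensional subspaces follows by composing $\gamma$ (continuous), $M$ (u.s.c.\ in the $(s\textrm{-}U\times w\textrm{-}U^*)$-topology) and $\gamma^*$ (weakly continuous); and in condition 3) the compactness of $\gamma$ upgrades $v_n\rightharpoonup v$ to $\gamma v_n\to\gamma v$ in $U$, the growth condition $H(M)(iii)$ bounds $\eta_n$ in $U^*$, the closed-graph property of $M$ identifies the weak limit $\eta\in M(\gamma v)$, and the strong--weak pairing gives convergence of $\langle\eta_n,\gamma v_n-\gamma y\rangle$.

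Two points need tightening. First, weak compactness of the values and the extraction of a weakly convergent subsequence from the bounded $\{\eta_n\}$ both require $U$ (hence $U^*$) to be reflexive; this holds in the paper's examples ($U=L^p$) but is not stated in $H(M)$ or $H(\gamma)$, so it should be made explicit. Second, and more substantively, your final step in condition 3) has the inequality backwards as written: after extracting the subsequence along which $\eta_n\rightharpoonup\eta$, you obtain $\langle\gamma^*\eta,v-y\rangle=\lim_k\langle v_{n_k}^*,v_{n_k}-y\rangle$, but the limit along an arbitrary convergent subsequence is only an \emph{upper} bound for $\liminf_n\langle v_n^*,v_n-y\rangle$, so equality with it does not yield the required $\langle u(y),v-y\rangle\leqslant\liminf_n\langle v_n^*,v_n-y\rangle$. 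The standard fix is: for each fixed $y$, first pass to a subsequence realizing $\liminf_n\langle v_n^*,v_n-y\rangle$, and only then extract a further subsequence with $\eta_{n_k}\rightharpoonup\eta_y\in M(\gamma v)$; the element $u(y)=\gamma^*\eta_y$ may then depend on $y$, which the definition of pseudomonotonicity explicitly permits. With these two repairs the proof is complete.
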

The proof of Lemma \ref{lemma_1} can be obtained similarly as the proof of Proposition 5.6 in \cite{HMSBOOK}. \\

We complete this section with a lemma, which will play a crucial role in the convergence of numerical scheme presented below.

\begin{Lemma}\label{kalita}
  Let $A\colon [0,T]\times W\to W^*$ be an operator satisfying hypotheses $H(A)$ and
  $\mathcal{A}\colon \mathcal{W}\to \mathcal{W}^*$ be a Nemytskii operator corresponding to $A$
  defined by $(\mathcal{A}v)(t)=A(t,v(t))$ for all $t\in[0,T]$, all $v\in \mathcal{W}$.
  Assume that $\{v_n\}\subset  \mathcal{W}$ is a sequence
  bounded in $M^{p,q}(0,T;W,W^*)$ and such that $v_n\to v$ weakly in $\mathcal{W}$ and
\[
  \limsupn\langle{\mathcal{A}v_n,v_n-v}\rangle_{\mathcal{W}^*\times \mathcal{W}}\leqslant 0.
\]
  Then $\mathcal{A}v_n\to \mathcal{A}v$ weakly in $\mathcal{W}^*$.
\end{Lemma}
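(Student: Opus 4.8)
The plan is to exploit the abstract structure of pseudomonotone-type operators on Bochner spaces, following the classical Browder--Brezis scheme adapted to the bounded-variation setting. First I would reduce everything to a local statement in time. Since $v_n \to v$ weakly in $\mathcal{W} = L^p(0,T;W)$ and $\mathcal{A} v_n$ is bounded in $\mathcal{W}^* = L^q(0,T;W^*)$ by the growth condition $H(A)(ii)$, after passing to a subsequence we may assume $\mathcal{A} v_n \to \xi$ weakly in $\mathcal{W}^*$ for some $\xi$. The goal is to identify $\xi = \mathcal{A} v$; once this is shown for an arbitrary subsequence, the Urysohn subsequence principle yields the weak convergence of the whole sequence. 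So the heart of the matter is the pointwise identification $\xi(t) = A(t,v(t))$ for a.e. $t$.

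To get that identification I would use the integral-functional version of pseudomonotonicity together with the bounded-variation hypothesis. The key is Proposition~\ref{kalita_compactness}: since $\{v_n\}$ is bounded in $M^{p,q}(0,T;W,W^*)$ and the embedding $W \subseteq H$ is compact while $H \subseteq W^*$ is continuous, the sequence $\{v_n\}$ is relatively compact in $L^p(0,T;H)$, hence (after a further subsequence) $v_n \to v$ strongly in $L^p(0,T;H)$ and $v_n(t) \to v(t)$ in $H$ for a.e. $t$. Combined with the coercivity-type estimate $H(A)(iii)$ — where the ``bad'' term is controlled by $|u|^2$ in the Hilbert norm — and the hypothesis $\limsup_n \langle \mathcal{A} v_n, v_n - v\rangle_{\mathcal{W}^* \times \mathcal{W}} \leqslant 0$, one derives that
\[
  \limsupn \int_0^T \langle A(t,v_n(t)), v_n(t) - v(t)\rangle_{W^* \times W}\, dt \leqslant 0
\]
and, crucially, that the integrand is asymptotically nonnegative in an integral sense (its negative part is dominated by $\beta |v_n(t) - v(t)|^2$, which tends to $0$ in $L^1(0,T)$). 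A standard Fatou/Egorov argument then upgrades this to $\limsup_n \langle A(t,v_n(t)), v_n(t) - v(t)\rangle_{W^* \times W} \leqslant 0$ for a.e. $t$ (along a suitable subsequence that may depend on $t$, handled via a measurable selection or a diagonal/Egorov extraction).

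With pointwise $\limsup \leqslant 0$ and $v_n(t) \to v(t)$ weakly in $W$ in hand, I would apply the pseudomonotonicity of $v \mapsto A(t,v)$ from hypothesis $H(A)(iv)$: for a.e. $t$ and every $y \in W$,
\[
  \langle A(t,v(t)), v(t) - y\rangle_{W^* \times W} \leqslant \liminfn \langle A(t,v_n(t)), v_n(t) - y\rangle_{W^* \times W}.
\]
Integrating over $(0,T)$ and using $\mathcal{A} v_n \rightharpoonup \xi$ together with $v_n \rightharpoonup v$, one obtains $\int_0^T \langle \xi(t), v(t) - y(t)\rangle\, dt \geqslant \int_0^T \langle A(t,v(t)), v(t) - y(t)\rangle\, dt$ for all $y \in \mathcal{W}$; testing with $y = v \pm \varepsilon w$ and letting $\varepsilon \to 0$ forces $\xi(t) = A(t,v(t))$ a.e. The main obstacle I expect is precisely the passage from the integral inequality $\limsup_n \int \langle A(t,v_n), v_n - v\rangle\, dt \leqslant 0$ to the a.e. pointwise statement: this requires carefully separating the coercive positive part from the $|v_n - v|^2$-controlled remainder (using $H(A)(iii)$ and the strong $L^p(0,T;H)$-convergence supplied by the bounded variation via Proposition~\ref{kalita_compactness}), and then invoking Egorov's theorem or a measurable-selection argument to make the pointwise pseudomonotonicity applicable — this is exactly where the $M^{p,q}$-boundedness hypothesis is indispensable and cannot be replaced by mere weak convergence in $\mathcal{W}$.
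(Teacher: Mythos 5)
Your overall strategy --- extract a subsequence with $\mathcal{A}v_n\to\xi$ weakly in $\mathcal{W}^*$, use the $M^{p,q}$-bound together with Proposition~\ref{kalita_compactness} to get $v_n\to v$ strongly in $L^p(0,T;H)$ and hence $v_n(t)\to v(t)$ in $H$ a.e., localize the $\limsup$-condition in time, and apply the pointwise pseudomonotonicity of $A(t,\cdot)$ to identify $\xi=\mathcal{A}v$ --- is the same as in the proof the paper defers to (Lemma~2 of \cite{Kalita3}). But the step you yourself flag as the main obstacle is exactly where your argument has a genuine gap. Writing $F_n(t)=\langle A(t,v_n(t)),v_n(t)-v(t)\rangle_{W^*\times W}$, it is \emph{not} true that the negative part of $F_n$ is dominated by $\beta|v_n(t)-v(t)|^2$. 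What $H(A)$(ii)--(iii) actually give, after Young's inequality applied to the cross term $\langle A(t,v_n(t)),v(t)\rangle$, is $F_n(t)\geqslant \tfrac{\mu_A}{2}\|v_n(t)\|_W^p-\beta|v_n(t)|^2-g(t)$ with a fixed $g\in L^1(0,T)$ depending on $v$; this makes $\{F_n^-\}$ uniformly integrable, but does not make $F_n^-\to 0$ in $L^1(0,T)$. Consequently one cannot pass from $\limsupn\int_0^T F_n\,dt\leqslant 0$ to $\limsupn F_n(t)\leqslant 0$ a.e.\ by Fatou or Egorov alone: a priori the positive and negative parts of $F_n$ could both stay bounded away from zero in $L^1$ while their difference tends to zero.

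The missing idea is a first, preparatory application of the pointwise pseudomonotonicity: for a.e.\ $t$ one proves $\liminfn F_n(t)\geqslant 0$ by contradiction. If $F_{n_k}(t)\to L<0$ along a $t$-dependent subsequence, then $\langle A(t,v_{n_k}(t)),v_{n_k}(t)\rangle\leqslant\langle A(t,v_{n_k}(t)),v(t)\rangle$ eventually, and the coercivity/growth pair $H(A)$(ii)--(iii) together with the boundedness of $|v_{n_k}(t)|$ forces $\|v_{n_k}(t)\|_W$ to be bounded; hence $v_{n_k}(t)\to v(t)$ weakly in $W$ (the limit being identified through the strong $H$-convergence), and $H(A)$(iv) with $y=v(t)$ gives $0\leqslant\liminf_k F_{n_k}(t)=L<0$, a contradiction. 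Only after this step do Fatou's lemma and the hypothesis $\limsupn\int_0^T F_n\,dt\leqslant 0$ combine to yield $F_n\to 0$ in $L^1(0,T)$, hence $F_n(t)\to 0$ a.e.\ along a further subsequence; and this simultaneously supplies the two facts you use but never establish, namely that $\|v_n(t)\|_W$ stays bounded for a.e.\ fixed $t$ (so that $v_n(t)\to v(t)$ weakly in $W$ --- this does not follow from the $L^p(0,T;W)$-bound alone) and that $\limsupn F_n(t)\leqslant 0$ a.e. With these two points repaired, your concluding identification of $\xi(t)=A(t,v(t))$ via the test functions $y=v\pm\varepsilon w$ goes through.
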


The proof of Lemma \ref{kalita} can be obtained by standard techniques, cf. Lemma 2 in \cite{Kalita3}.

\section{Discrete problem}\label{Discrete_problem}
  In this section we consider a discrete problem corresponding to Problem {$\mathcal{P}$}.

  For $N\in\nat$ we consider an arbitrary fixed time grid
\[
  0=t_0 <t_1 \ldots t_{n-1}  <t_n^N = T, \quad \tau_n = t_n-t_{n-1}\quad\textrm{for}\ n=1,\ldots,N.
\]

We define the following discretization parameters
\[
  \tau_{n+\frac{1}{2}} = \frac{\tau_n + \tau_{n+1}}{2}, \quad t_{n+\frac{1}{2}}=t_n + \frac{1}{2}\tau_{n+1}
  \quad\textrm{for}\ n=1,\ldots,N-1,
\]
\[
  \quad r_{n+1}=\frac{\tau_{n+1}}{\tau_n}
  \quad\textrm{for}\ n=1,\ldots,N-1,
\]
\[
  \gamma_n:=\max\left(0,\frac{1}{r_n}-\frac{1}{r_{n-1}}\right)\quad\textrm{for}\ n=2,\ldots,N,
\]
\[
  \tau_{max}=\max_{n=1,\ldots,N} \tau_n, \quad r_{max}:=\max_{n=2,\ldots,N}r_n, \quad r_{min}:=\max_{n=2,\ldots,N}r_n,
\]
\[
  c_\gamma:=\max_{n=3,\ldots,N} \frac{\gamma_n}{\tau_n},\quad
  \sigma(\tau)=\frac{1}{2}\sum_{j=1}^{N-1}\frac{(\tau_{j+1}-\tau_j)^2}{\tau_{j+1}+\tau_j}.
\]
  We also define  $f^n=\frac{1}{\tau_{n+\frac{1}{2}}}\int_{t_{n-\frac{1}{2}}}^{t_{n+\frac{1}{2}}}f(t)\,dt$ for $n=1,...,N-1$.\\
  Finally, in order to approximate the initial conditions, we introduce elements $u^0_\tau$, $v^0_\tau$, whose convergence to $u_0$ and $v_0$ will be specified later.\\

  The discrete problem reads as follows.\\

\noindent {\bf Problem ${\mathcal{P}}_\tau$}. Find sequences $\{u^n\}_{n=0}^N\subset W$ and $\{v^n\}_{n=0}^N\subset W$ such that
\begin{eqnarray}
 && v^n=\frac{1}{\tau_{n+1}} (u^{n+1}-u^n), \quad n=0,1,\ldots,N-1, \label{dis1}\\
 && \frac{1}{\tau_{n+\frac{1}{2}}}
     (v^n-v^{n-1}) + A(t_n,v^n) + B(t_n,u^n) + \gamma^*\eta^n = f^n,\cr
 & & \hspace*{6cm} \quad n=1,2,\ldots,N-1,  \label{discrete} \\
 && \eta^n \in M(\gamma v^n), \label{dis2}\\
 && u^0 = u^0_\tau, \quad v^0 = v^0_\tau. \label{dis3}
\end{eqnarray}

  In what follows, we formulate a theorem concerning existence of solution to Problem ${\mathcal{P}}_\tau$.

\begin{Theorem}
  Under hypotheses $H(A)$, $H(B_0)$, $H(C)$, $H(M)$, $H_0$ and $\tau_{max}<\frac{1}{\beta}$
  there exist sequences $\{u^n\}_{n=0}^N$ and $\{v^n\}_{n=0}^N$ being a solution
  to Problem ${\mathcal{P}}_\tau$.
\end{Theorem}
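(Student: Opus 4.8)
The plan is to solve the discrete system \eqref{dis1}--\eqref{dis3} inductively in $n$. Observe first that \eqref{dis1} lets us eliminate $u^{n+1}$ in favour of $v^n$: given $u^n$, we have $u^{n+1}=u^n+\tau_{n+1}v^n$, so it suffices to produce $v^n\in W$. Assuming $u^0,\dots,u^n$ and $v^0,\dots,v^{n-1}$ are already known, the equation \eqref{discrete} together with the inclusion \eqref{dis2} becomes a stationary (elliptic) inclusion for the single unknown $v:=v^n\in W$, namely
\[
  \frac{1}{\tau_{n+\frac12}}\,v + A(t_n,v) + \gamma^*M(\gamma v) \ni f^n - B(t_n,u^n) + \frac{1}{\tau_{n+\frac12}}\,v^{n-1}=:g^n\in W^*.
\]
The right-hand side $g^n$ is a fixed element of $W^*$, using $H(f)$, $H(B_0)$, $H(C)$ and the already-constructed data; note $W\subseteq H$ continuously so the term $\frac{1}{\tau_{n+\frac12}}v$ makes sense as the composition of the Riesz-type embedding $W\hookrightarrow H\cong H^*\hookrightarrow W^*$.

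Next I would verify that the operator $\mathcal{T}\colon W\to 2^{W^*}$ defined by $\mathcal{T}v = \frac{1}{\tau_{n+\frac12}}v + A(t_n,v) + \gamma^*M(\gamma v)$ is pseudomonotone and coercive, so that Theorem \ref{pseudo} applies and gives surjectivity, in particular $g^n\in R(\mathcal{T})$. Pseudomonotonicity: the linear map $v\mapsto\frac{1}{\tau_{n+\frac12}}v$ from $W$ to $W^*$ is bounded, monotone and (being linear and continuous) hemicontinuous, hence pseudomonotone; $v\mapsto A(t_n,v)$ is pseudomonotone by $H(A)(iv)$ (viewing the single-valued operator as multivalued with singleton values); and $v\mapsto\gamma^*M(\gamma v)$ is pseudomonotone by Lemma \ref{lemma_1} using $H(M)$ and $H(\gamma)$. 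Then Proposition \ref{prop:sum_pseudo} gives that the sum $\mathcal{T}$ is pseudomonotone. Coercivity: for $v^*\in\mathcal{T}v$ write $v^*=\frac{1}{\tau_{n+\frac12}}v+A(t_n,v)+\gamma^*\eta$ with $\eta\in M(\gamma v)$; using $H(A)(iii)$, $\langle\frac{1}{\tau_{n+\frac12}}v,v\rangle=\frac{1}{\tau_{n+\frac12}}|v|^2\ge0$, and the estimate
\[
  |\langle\gamma^*\eta,v\rangle|=|\langle\eta,\gamma v\rangle|\le \|\eta\|_{U^*}\|\gamma v\|_U
   \le c_M\big(1+\|\gamma v\|_U^{p-1}\big)\|\gamma\|_{\mathcal L(W,U)}\|v\|_W
\]
from $H(M)(iii)$ and Young's inequality, one obtains $\langle v^*,v\rangle\ge (\mu_A-c_M\|\gamma\|_{\mathcal L(W,U)}^p-\varepsilon)\|v\|_W^p - (\text{lower order})$; choosing $\varepsilon$ small, hypothesis $H_0$ makes the leading coefficient positive, so $\langle v^*,v\rangle/\|v\|_W\to+\infty$ as $\|v\|_W\to\infty$, which is coercivity. (The $-\beta|v|^2-\lambda$ terms from $H(A)(iii)$ are of order $\le 2\le p$ and are absorbed; this is exactly where $p\ge2$ and the condition $\tau_{\max}<1/\beta$ would be used if one instead kept the $\frac{1}{\tau_{n+\frac12}}|v|^2$ term to beat $-\beta|v|^2$, but since $\tau_{n+\frac12}$ is fixed and positive at this stage the crude bound suffices for existence at each step.)

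Having obtained $v^n\in W$ solving the elliptic inclusion, I set $u^{n+1}=u^n+\tau_{n+1}v^n$ and pick $\eta^n\in M(\gamma v^n)$ realizing the inclusion; this closes the induction. The base case is supplied by \eqref{dis3}: set $u^0=u^0_\tau$, $v^0=v^0_\tau$, and then $u^1=u^0+\tau_1 v^0$ from \eqref{dis1} with $n=0$; the recursion \eqref{discrete} only starts at $n=1$, so no elliptic solve is needed to initialize. Running the recursion for $n=1,\dots,N-1$ produces the full sequences $\{u^n\}_{n=0}^N$, $\{v^n\}_{n=0}^N$. The main obstacle is the coercivity verification: one must carefully track that the multivalued term $\gamma^*M(\gamma\cdot)$, whose growth is of order $p-1$ by $H(M)(iii)$, does not destroy the coercivity coming from $A$, and this is precisely guaranteed by the smallness condition $H_0$ together with Young's inequality; everything else (pseudomonotonicity of the sum, surjectivity) is a direct citation of Proposition \ref{prop:sum_pseudo}, Lemma \ref{lemma_1} and Theorem \ref{pseudo}.
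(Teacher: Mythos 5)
Your argument is essentially the paper's proof: the same operator $Tv=\frac{1}{\tau_{n+\frac12}}v+A(t_n,v)+\gamma^*M(\gamma v)$, pseudomonotonicity of the sum via $H(A)(iv)$, Lemma \ref{lemma_1} and Proposition \ref{prop:sum_pseudo}, coercivity from $H(A)(iii)$, $H(M)(iii)$ and $H_0$, surjectivity from Theorem \ref{pseudo}, and recovery of $u^{n}$ by the summation $u^{n+1}=u^n+\tau_{n+1}v^n$. The one point to tighten is your coercivity parenthetical: for $p=2$ the ``crude bound'' that absorbs $-\beta|v|^2$ into the $\|v\|_W^p$ term need not work when $\beta\|i_{WH}\|_{\mathcal{L}(W,H)}^2$ exceeds $\mu_A-c_M\|\gamma\|_{\mathcal{L}(W,U)}^p$, so one should argue as the paper does (and as you mention in passing): retain $\left(\frac{1}{\tau_{n+\frac12}}-\beta\right)|v|_H^2\geqslant 0$, which is precisely where the hypothesis $\tau_{max}<\frac{1}{\beta}$ is needed.
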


\begin{proof}
  We define the multivalued operator $T\colon W\to 2^{W^*}$ by
\[
  Tv = \frac{1}{\tau_{n+\frac{1}{2}}} v + A(t_n,v) + \gamma^* M(\gamma v),\ \textrm{for}\ v\in W.
\]
  First we show that $T$ is coercive. Let $v\in W$ and $z\in Tv$. Thus we have $z=\frac{1}{\tau_{n+\frac{1}{2}}} v + A(t_n,v) + \gamma^*\eta$
  with $\eta \in M(\gamma v)$. Using hypotheses $H(A)$, $H(B_0)$, $H(C)$ and $H(M)$, we estimate
\begin{eqnarray*}
 & &
   \langle z,v\rangle_{W^*\times W} = \frac{1}{\tau_{n+\frac{1}{2}}} (v,v)_H + \langle A(t_n,v),v\rangle_{W^*\times W} + \langle \eta,\gamma v\rangle_{U^*\times U}\cr
 & \geqslant &
   \frac{1}{\tau_{n+\frac{1}{2}}}|v|_H^2 + \mu_A\|v\|_W^p - \beta|v|_H^2 -\lambda -\|\eta\|_{U^*}\|\gamma v\|_U \cr
 & \geqslant &
     \bigg(\frac{1}{\tau_{n+\frac{1}{2}}}-\beta\bigg)|v|_H^2 + \mu_A\|v\|_W^p  -\lambda - c_M\left(1+\|\gamma v\|_U^{p-1}\right)\|\gamma v\|_U \cr
  & \geqslant & \bigg(\frac{1}{\tau_{n+\frac{1}{2}}}-\beta\bigg)|v|_H^2 + \mu_A\|v\|_W^p\cr
  & &  -\lambda -c_M\|\gamma\|^p_{\mathcal{L}(W,U)}\|v\|^p -c_M\|\gamma\|_{\mathcal{L}(W,U)} \|v\|_W \cr
  & \geqslant & \bigg(\frac{1}{\tau_{n+\frac{1}{2}}}-\beta\bigg)|v|_H^2 + \left(\mu_A - c_M \|\gamma\|_{\mathcal{L}(W,U)}^p \right)\|v\|_W^p\cr
  & & -\lambda -c_M\|\gamma\|_{\mathcal{L}(W,U)} \|v\|_W.
\end{eqnarray*}
  Using $H_0$ and inequality $\tau_{max}<\frac{1}{\beta}$, we see that $T$ is coercive. From $H(A)(iv)$ and Lemma \ref{lemma_1}, we conclude that
  operator $T$ is pseudomonotone as a sum of three pseudomonotone operators. This allows to use
  Theorem~\ref{pseudo} to conclude that $T$ is surjective, and as a result, we can establish the existence of $v^n$ for a given $v^0,\ldots,v^{n-1}$ in Problem ${\mathcal{P}}_\tau$.
  Moreover, using
\begin{eqnarray*}
  u^n
  & = & u^0+\sum_{j=0}^{n-1} (u^{j+1}-u^j)\cr
  & = & u^0 +\sum_{j=0}^{n-1} \tau_{j+1} v^j:=L(v^n), \quad n=0,1,\ldots, N,
\end{eqnarray*}
  we can recover the sequence $u^1, u^2,...,u^n$. This completes the proof.
\end{proof}

  The next lemma concerns \textit{a priori} estimate for solution of Problem ${\mathcal{P}}_\tau$. In what follows, we denote by $c$ a constant independent on $\tau$, which can vary from line to line. The dependence of $c$ on the other data or parameter will be specified if needed.    

\begin{Lemma}[\textit{A priori} estimate] \label{apriori}
Let hypotheses $H(A)$, $H(B_0)$, $H(C)$, $H(M)$, $H_0$ hold and the time grid satisfy the following constraint
\begin{equation}\label{eq_KB_1}
  \tau_{max} < \min\Bigg\{\frac{2\left(\mu_A-c_M\|\gamma\|_{\mathcal{L}(W,U)}^p\right)}{\beta_B \|i_{WV}\|_{\mathcal{L}(W,V)}}, \frac{1}{2\beta}\Bigg\}.
\end{equation}
Then, for $n=1,2,\ldots,N-1$, we have
\begin{eqnarray}\label{ap1}
 & & \|u^{n+1}\|_V^2 + |v^n|^2 + \sum_{j=1}^n |v^j-v^{j-1}|^2 + \sum_{j=1}^n \tau_{j+\frac{1}{2}} \|v^j\|_W^p +  \sum_{j=1}^n \tau_{j+\frac{1}{2}} \|\eta^j\|_{U^*}^q \cr
 & \leqslant & c \bigg(1+\|u^0\|_V + |v^0|^2 + \tau_1^2\|v^0\|_V + \sum_{j=1}^n \tau_{j+\frac{1}{2}} \|f^j\|_{W^*}^q \bigg),
\end{eqnarray}
where $c=c(r_{min},r_{max},c_\gamma,T)>0$. Moreover
\begin{equation}
\sum_{j=1}^n \tau_{j+\frac{1}{2}} \bigg\|\frac{1}{\tau_{j+\frac{1}{2}}} (v^j - v^{j-1})\bigg\|_{W^*}^q \leqslant c. \label{ap2}
\end{equation}
\end{Lemma}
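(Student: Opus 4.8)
The plan is to derive the estimate by testing the discrete equation \eqref{discrete} with $v^j$ and summing over $j=1,\dots,n$, exactly in the spirit of the energy method for second order evolution problems, but taking care of the nonuniform grid. First I would multiply \eqref{discrete} by $\tau_{j+\frac12}$ and apply the duality pairing with $v^j$; this turns the discrete acceleration term $\frac{1}{\tau_{j+\frac12}}(v^j-v^{j-1})$ into $(v^j-v^{j-1},v^j)_H$, which by the elementary identity $(a-b,a)=\tfrac12|a|^2-\tfrac12|b|^2+\tfrac12|a-b|^2$ telescopes upon summation and produces both $|v^n|^2$ and the sum $\sum_{j=1}^n|v^j-v^{j-1}|^2$ on the left. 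For the principal part $\langle A(t_j,v^j),v^j\rangle_{W^*\times W}$ I would use the coercivity $H(A)(iii)$ to get $\mu_A\|v^j\|_W^p-\beta|v^j|^2-\lambda$. For the multivalued term, $\langle\gamma^*\eta^j,v^j\rangle=\langle\eta^j,\gamma v^j\rangle_{U^*\times U}$ is bounded below using $H(M)(iii)$ by $-c_M\|\gamma\|_{\mathcal L(W,U)}^p\|v^j\|_W^p-c_M\|\gamma\|_{\mathcal L(W,U)}\|v^j\|_W$, and the first of these combines with the $A$-term to give the positive quantity $(\mu_A-c_M\|\gamma\|_{\mathcal L(W,U)}^p)\|v^j\|_W^p$, positive by $H_0$.

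The term involving $B$ needs the decomposition $B=B_0+C$. For $\langle B_0 u^j,v^j\rangle$ I would use $v^j=\frac{1}{\tau_{j+1}}(u^{j+1}-u^j)$ together with the symmetry and boundedness of $B_0$ from $H(B_0)$; writing $\langle B_0u^j,u^{j+1}-u^j\rangle=\tfrac12\langle B_0u^{j+1},u^{j+1}\rangle-\tfrac12\langle B_0u^j,u^j\rangle-\tfrac12\langle B_0(u^{j+1}-u^j),u^{j+1}-u^j\rangle$, the first two pieces telescope and give $\mu_B\|u^{n+1}\|_V^2$ via strong positivity, while the leftover negative term $-\tfrac12\langle B_0(u^{j+1}-u^j),u^{j+1}-u^j\rangle=-\tfrac{\tau_{j+1}^2}{2}\langle B_0v^j,v^j\rangle$ is controlled by $-\tfrac{\beta_B}{2}\tau_{j+1}^2\|v^j\|_V^2\geqslant -\tfrac{\beta_B}{2}\tau_{max}\|i_{WV}\|_{\mathcal L(W,V)}^2\tau_{j+1}\|v^j\|_W^2$; here the first constraint in \eqref{eq_KB_1} is exactly what makes this absorbable into the good $\|v^j\|_W^p$ term (recall $p\geqslant2$). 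The term $\langle C(t_j,u^j),v^j\rangle$ is estimated by $H(C)(ii)$ and Young's inequality as $\beta_C(1+\|u^j\|_V^{2/q})\|v^j\|_{W^*}$, absorbing $\|v^j\|_W$ contributions into the coercive term and leaving $\|u^j\|_V^2$ factors; the right-hand side $\langle f^j,v^j\rangle$ is handled by Young: $\tau_{j+\frac12}\|f^j\|_{W^*}\|v^j\|_W\leqslant \varepsilon\tau_{j+\frac12}\|v^j\|_W^p+c_\varepsilon\tau_{j+\frac12}\|f^j\|_{W^*}^q$.

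After collecting terms, the left side of the summed inequality carries $\mu_B\|u^{n+1}\|_V^2+\tfrac12|v^n|^2+\tfrac12\sum_{j=1}^n|v^j-v^{j-1}|^2+c_0\sum_{j=1}^n\tau_{j+\frac12}\|v^j\|_W^p$ (with $c_0>0$ after absorbing the $B_0$, $C$ and $f$ contributions using \eqref{eq_KB_1}), and the right side carries a constant times $1+\|u^0\|_V^2+|v^0|^2+\tau_1^2\|v^0\|_V^2$ plus $\sum\tau_{j+\frac12}\|f^j\|_{W^*}^q$ plus the "bad" leftover terms $\beta\sum_{j=1}^n\tau_{j+\frac12}|v^j|^2$ (from $H(A)(iii)$, noting $(v^j,v^j)_H$ appears both from the acceleration identity sign and the coercivity; the $\tau_{max}<\tfrac{1}{2\beta}$ half of \eqref{eq_KB_1} lets one absorb the $j=n$ instance into $\tfrac12|v^n|^2$) and $\sum_{j=1}^n\tau_{j+\frac12}\|u^j\|_V^2$ (from $C$). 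This is now a discrete Gronwall situation: the remaining $\sum\tau_{j+\frac12}|v^j|^2$ and $\sum\tau_{j+\frac12}\|u^j\|_V^2$ on the right are controlled by the corresponding quantities on the left at the earlier indices, so a discrete Gronwall lemma yields the bound with a constant depending on $T$ (and, through the weights $\tau_{j+\frac12}\sim\tau_j\sim\tau_{j+1}$ comparisons, on $r_{min},r_{max}$, and through $c_\gamma$ if the grid irregularity enters the Gronwall step). Once \eqref{ap1} is established, the bound on $\sum\tau_{j+\frac12}\|\eta^j\|_{U^*}^q$ follows from $H(M)(iii)$ and the $\|v^j\|_W^p$ bound, and \eqref{ap2} follows by reading \eqref{discrete} as $\frac{1}{\tau_{j+\frac12}}(v^j-v^{j-1})=f^j-A(t_j,v^j)-B(t_j,u^j)-\gamma^*\eta^j$ and estimating each right-hand term in $W^*$ by $H(A)(ii)$, $H(C)(ii)$, $\|B_0\|$, $H(M)(iii)$ and $\|\gamma^*\|=\|\gamma\|$, then taking the $q$-th power, multiplying by $\tau_{j+\frac12}$ and summing. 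The main obstacle is the bookkeeping for the nonuniform grid — matching the discrete identities to the exact weights $\tau_{j+\frac12}$, handling the mismatch between $\tau_{j+1}$ (appearing in $v^j=\frac{1}{\tau_{j+1}}(u^{j+1}-u^j)$) and $\tau_{j+\frac12}$ in the $B_0$-telescoping, and tracking how $r_{min},r_{max},c_\gamma$ enter the final constant — rather than any single analytic inequality, each of which is routine.
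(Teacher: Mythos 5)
Your proposal follows essentially the same route as the paper: testing \eqref{discrete} with $v^j$, using the identity $(a-b,a)=\tfrac12|a|^2-\tfrac12|b|^2+\tfrac12|a-b|^2$ for the acceleration, telescoping the $B_0$-term via the equivalent inner product induced by $B_0$, absorbing the leftover $-\tfrac{\tau_{j+1}^2}{2}\langle B_0v^j,v^j\rangle$ and the multivalued term into the coercive $\|v^j\|_W^p$ contribution under \eqref{eq_KB_1} and $H_0$, handling $C$ and $f$ by Young, and closing with a discrete Gronwall argument; the bound \eqref{ap2} is then read off from the equation exactly as you describe. The only content you gloss over is the precise bookkeeping of the $\tau_{j+\frac12}/\tau_{j+1}$ mismatch in the telescoped $B_0$-sum (which is where $r_{min}$, $r_{max}$ and $c_\gamma$ enter the constant in the paper), but you correctly identify this as routine grid bookkeeping rather than a missing idea.
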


\begin{proof}
  We test \eqref{discrete} by $v^n$ and calculate
\begin{eqnarray}\label{1}
  \frac{1}{\tau_{j+\frac{1}{2}}}(v^n -v^{n-1},v^n)
  & = & \frac{1}{2 \tau_{j+\frac{1}{2}}}\left(|v^n|^2-|v^{n-1}|^2 + |v^n-v^{n-1}|^2\right)\\
  \langle A(t_n,v^n),v^n\rangle_{W^*\times W}
  & \geqslant & \mu_A\|v^n\|_W^p -\beta |v^n|^2 -\lambda\\
  \langle B(t_n,u^n),v^n \rangle_{W^*\times W}
  & = & \langle B_0(u^n),v^n\rangle + \langle C(t,u^n),v^n\rangle.
\end{eqnarray}
  We introduce the inner product $\langle\cdot,\cdot\rangle_B \colon V\times V\to \mathbb{R}$ by
\[
  \langle u,v\rangle_B:= \langle B_0u,v\rangle_{V^*\times V}\quad\textrm{for}\ u,v\in V
\]
  and the corresponding norm
\[
  \|u\|_B=\sqrt{\langle u,u\rangle_B}\quad\textrm{for}\ u\in V.
\]
  Note that the norms $\|u\|_B$ and $\|u\|_V$ are equivalent since $\mu_B\|u\|_V^2 \leqslant \|u\|_B^2 \leqslant \beta_B\|v\|_V^2$ for all $u\in V$.
  We have
\begin{eqnarray}
 & & \langle B_0 u^n,v^n \rangle_{V^*\times V} = \langle B_0 Lv^n,v^n\rangle_{V^*\times V}\cr
 & = & \big\langle B_0 Lv^n,\frac{1}{\tau_{n+1}}(Lv^{n+1}-Lv^n)\big\rangle_{V^*\times V} \cr
 & = & \frac{1}{2\tau_{n+1}}\left(\|Lv^{n+1}\|_B^2-\|Lv^n\|_B^2-\tau_{n+1}^2 \|v^n\|_B^2\right) \cr
 & = & \frac{1}{2\tau_{n+1}}(\|u^{n+1}\|_B^2-\|u^n\|_B^2-\tau_{n+1}^2 \|v^n\|_B^2 ).
\end{eqnarray}
 From hypotheses $H(C)$, using Young inequality, for any fixed $\varepsilon>0$, we find that
\begin{eqnarray*}
  & &
  |\langle C(t_n,u^n),v^n\rangle_{W^*\times W}| \leqslant \|C(t_n,u^n)\|_{W^*} \|v^n\|_W\\
  &  \leqslant & \varepsilon\|v^n\|_W^p+ c(\varepsilon)\|C(t_n,u^n)\|_{W^*}^q \\
  & \leqslant & \varepsilon\|v^n\|_W^p + c(\varepsilon) (1+\|u^n\|_V^2).
\end{eqnarray*}
  We come to the multivalued term
\begin{eqnarray*}
  & & |\langle \gamma^*\eta^n,v^n\rangle_{W^*\times W}|=|\langle \eta^n,\gamma v^n\rangle_{U^*\times U}| \leqslant \|\eta^n\|_{U^*} \|\gamma v^n\|_U\\
  & \leqslant & c_M(1+\|\gamma v^n\|_U^{p-1})\|\gamma v^n\|\\
  & \leqslant & (c_M\|\gamma\|^p_{\mathcal{L}(W,U)}+\varepsilon) \|v^n\|_W^p + c(\varepsilon) c_M^q \|\gamma\|_{\mathcal{L}(W,U)}^q
\end{eqnarray*}
  and
\begin{equation}
  \label{2} \langle f^n,v^n \rangle\leqslant \varepsilon \|v^n\|_W^p + c(\varepsilon) \|f^n\|_{W^*}^q.
\end{equation}

We test \eqref{discrete} with $v^n$, apply \eqref{1}-\eqref{2}, replace $n$ with $j$ and multiply by $2\tau_{j+\frac{1}{2}}$ to obtain
\begin{eqnarray}\label{poj}
 & & |v^j|^2 - |v^{j-1}|^2 + |v^j-v^{j-1}|^2 + 2\tau_{j+\frac{1}{2}}(\mu_A-c_M\|\gamma\|_{\mathcal{L}(W,U))}^p-3\varepsilon)\|v\|_W^p \cr
 & & -2\tau_{j+\frac{1}{2}}\beta|v^j|^2 + \frac{\tau_{j+\frac{1}{2}}}{\tau_{j+1}} (\|u^{j+1}\|_B^2-\|u^j\|_B^2) -\tau_{j+1}\tau_{j+\frac{1}{2}} \|v^j\|_B^2\cr
 & & -2\tau_{j+\frac{1}{2}} c(\varepsilon)\|u^j\|_V^2 \cr
 & \leqslant & 2\lambda   \tau_{j+\frac{1}{2}}+ 2 \tau_{j+\frac{1}{2}} c(\varepsilon) +
       2\tau_{j+\frac{1}{2}} c(\varepsilon)c_M^q \|\gamma\|_{\mathcal{L}(W,U)}^q\cr
 & & + 2 \tau_{j+\frac{1}{2}} c(\varepsilon)\|f^j\|_{W^*}^q.
\end{eqnarray}
  We sum up \eqref{poj} for $j=1,\ldots,n$, to obtain
\begin{eqnarray}\label{0}
 & & |v^n|^2 + \sum_{j=1}^n |v^j-v^{j-1}|^2 + 2\sum_{j=1}^n \tau_{j+\frac{1}{2}}\left(\mu_A-c_M\|\gamma\|_{\mathcal{L}(W,U)}^p -3\varepsilon\right)\|v^j\|_W^p \cr
 & & +\frac{1}{2}\left(1+\frac{1}{r_{n+1}}\right)\|u^{n+1}\|_B^2 + \frac{1}{2}\sum_{j=2}^n\left(\frac{1}{r_j}-\frac{1}{r_{j+1}}\right) \|u^j\|_B^2 \cr
 & \leqslant &|v^0|^2 + 2\beta\sum_{j=1}^n \tau_{j+\frac{1}{2}} |v^j|^2 + \frac{1}{2}\left(1+\frac{1}{r_2}\right)\|u^1\|_B^2 + \sum_{j=1}^n \tau_{j+\frac{1}{2}}
  \tau_{j+1} \|v^j\|_B^2 \cr
 & & +2c(\varepsilon)\sum_{j=1}^n\tau_{j+\frac{1}{2}}\|u^j\|_V^2 + 2c(\varepsilon) \sum_{j=1}^n \tau_{j+\frac{1}{2}}
\|f^j\|^q_{W^*} + cT.
\end{eqnarray}
  Note that
\begin{eqnarray}\label{3}
  & & \sum_{j=1}^n \tau_{j+\frac{1}{2}} \tau_{j+1}\|v\|_B^2 \leqslant \sum_{j=1}^n \tau_{j+\frac{1}{2}} \tau_{j+1} \beta_B
       \|i_{WV}\|_{\mathcal{L}(W,V)}^2(1+\|v^j\|_W^p) \cr
  & \leqslant & \beta_B\|i_{WV}\|_{\mathcal{L}(W,V)}^2 \sum_{j=1}^n \tau_{j+\frac{1}{2}} \tau_{j+1}
       + \beta_B\|i_{WV}\|_{\mathcal{L}(W,V)}^2 \sum_{j=1}^n \tau_{j+\frac{1}{2}} \tau_{j+1} \|v^j\|_W^p \cr
  & \leqslant & cT  + \beta_B\|i_{WV}\|_{\mathcal{L}(W,V)}^2 \tau_{max} \sum_{j=1}^n \tau_{j+\frac{1}{2}} \|v^j\|_W^p
\end{eqnarray}
  and
\begin{eqnarray}\label{4}
  & & \frac{1}{2} \sum_{j=2}^n \left(\frac{1}{r_j}-\frac{1}{r_{j+1}}\right)\|u^j\|_B^2 = -\frac{1}{2}\sum_{j=2}^n
    \left(\frac{1}{r_{j+1}}-\frac{1}{r_j}\right) \|u^j\|_B^2 \cr
  & = & -\frac{1}{2}\sum_{j=2}^n \tau_{j+1} \frac{\gamma_{j+1}}{\tau_{j+1}}\|u^j\|_B^2 \geqslant
-\frac{1}{2}\sum_{j=2}^n \tau_{j+1} c_\gamma \beta_B \|u^j\|_V^2
\end{eqnarray}
  From $\frac{\tau_{j+1}}{\tau_j}=r_{j+1}$ is follows that $\tau_j\leqslant \frac{\tau_{j+1}}{r_{min}}$.
  Thus
\begin{eqnarray}\label{5}
 & & 2c(\varepsilon) \sum_{j=1}^n \tau_{j+\frac{1}{2}} \|u^j\|_V^2 = c(\varepsilon)\sum_{j=1}^n(\tau_j + \tau_{j+1})\|u^j\|_V^2\cr
 & \leqslant &
 c(\varepsilon)\sum_{j=1}^n \left(\frac{\tau_{j+1}}{r_{min}}+\tau_{j+1}\right)\|u^j\|_V^2\cr
 & = & c(\varepsilon) \left(\frac{1}{r_{min}}+1\right)\sum_{j=1}^n \tau_{j+1}\|u^j\|_V^2,
\end{eqnarray}
\begin{equation}\label{6}
  2\beta \sum_{j=1}^n \tau_{j+\frac{1}{2}}|v^j|^2 \leqslant 2\beta\tau_{max}|v^n|^2+\beta\left(\frac{1}{r_{min}+1}\right)\sum_{j=1}^{n-1} \tau_{j+1} |v^j|^2
\end{equation}
  and
\begin{equation}\label{7}
  \mu_B\frac{1}{2}\left(1+\frac{1}{r_{n+1}}\right)\|u^{n+1}\|_V^2 \leqslant \frac{1}{2}\left(1+\frac{1}{r_{n+1}}\right)\|u^{n+1}\|_B^2.
\end{equation}
  Using \eqref{3}-\eqref{7} in \eqref{0}, we get
\begin{eqnarray*}
 & & \left(1-2\beta\tau_{max}\right)|v^n|^2 + \frac{1}{2} \mu_B \left(1+\frac{1}{r_{n+1}}\right)\|u^{n+1}\|_B^2 + \sum_{j=1}^n|v^j-v^{j-1}|^2 \cr
 & & +\sum_{j=1}^n \tau_{j+\frac{1}{2}}\left[2(\mu_A-c_M\|\gamma\|_{\mathcal{L}(W,U)}^p-3\varepsilon)-\beta_B\|i_{WV}\|_{\mathcal{L}(W,V)}^2
    \tau_{max}\right]\|u^j\|^p_W\cr
 & \leqslant & |v^0|^2 + \frac{1}{2}\left(1+\frac{1}{r_2}\right)\beta_B \|\tau_1 v^0 + u^0\|_V^2\cr
 & & + \beta \left(\frac{1}{r_{min}}+1\right)\sum_{j=1}^{n-1} \tau_{j+1} |v^j|^2  + cT\cr
 & & +\left[c(\varepsilon)\left(\frac{1}{r_{min}}+1\right)+
   \frac{1}{2}c_\gamma \beta_B\right] \sum_{j=1}^n \tau_{j+1}\|u^j\|_V^2\cr
  & & + 2c(\varepsilon) \sum_{j=1}^n \tau_{j+\frac{1}{2}} \|f^j\|_{W^*}^q.
\end{eqnarray*}
  Using $H_0$ and (\ref{eq_KB_1}), we see that for $\varepsilon >0$ small enough,
  we can use the Gronwall lemma for the last inequality. This, together with
  hypothesis $H(M)(iii)$, gives (\ref{eq_KB_1}).

  As for (\ref{ap1}), we use \eqref{discrete} and get
\begin{eqnarray}\label{8}
  \bigg\|\frac{1}{\tau_{j+\frac{1}{2}}}(v^j-v^{j-1})\bigg\|_{W^*}^q
  & \leqslant & c(\|A(t_j,v^j)\|_{W^*}^q + \|B(t_j,u^j)\|_{W^*}^q\cr
  & & + \|\gamma^*\eta^j\|_{W^*}^q + \|f^j\|_{W^*}^q),
\end{eqnarray}
with a positive constant $c$. From growth conditions on $A$, $B_0$ and $C$, we estimate
\begin{eqnarray}
 & & \|A(t_j,v^j)\|_{W^*}^q \leqslant c(1+\|v^j\|_W^p), \label {9}\\
 & & \|B(t_j,v^j)\|_{W^*}^q \leqslant c(1+\|u^j\|_V^2), \label{10}\\
 & & \|\gamma^*\eta^j\|_{W^*}^q \leqslant \|\gamma\|_{\mathcal{L}(W,U)}^q \|\eta^j\|_{U^*}^q. \label{11}
\end{eqnarray}
Using \eqref{9}-\eqref{11} in \eqref{8}, multiplying \eqref{8} by $\tau_{j+{\frac{1}{2}}}$ and summing up with $j=1,\ldots,n$, we have
\begin{eqnarray}
 & & \sum_{j=1}^n \tau_{j+\frac{1}{2}} \left\|\frac{1}{\tau_{j+\frac{1}{2}}} (v^j - v^{j-1})\right\|_{W^*}^q \leqslant c\biggl(1+\sum_{j=1}^n
  \tau_{j+\frac{1}{2}}\|v^j\|_W^p \cr
 & & +\sum_{j=1}^n \tau_{j+\frac{1}{2}} \|u^j\|_V^2 + \sum_{j=1}^n \tau_{j+\frac{1}{2}}\|\eta\|_{U^*}^q + \sum_{j=1}^n
\tau_{j+\frac{1}{2}}\|f^j\|_{W^*}^q\biggr). \label{12}
\end{eqnarray}
  Finally, using \eqref{ap1}, we get \eqref{ap2} from \eqref{12}. This completes the proof of the lemma.
\end{proof}

  Now, we use the solution $\{u^n\}_{n=0}^N, \{v^n\}_{n=0}^N$ of \eqref{dis1}-\eqref{dis3} to define piecewise constant and piecewise
  linear functions whose convergence will be studied in next section.
\begin{eqnarray*}
& &
u_{\tau}(t):=
\begin{cases}
0   & \mbox{for} \quad t\in[0,t_{\frac{1}{2}}] \\
u^n & \mbox{for} \quad t\in(t_{n-\frac{1}{2}},t_{n+\frac{1}{2}}] \quad n=1,2,\ldots,N-1\\
0   & \mbox{for} \quad t\in(t_{N-\frac{1}{2}},t_N],
\end{cases}
\cr
& & v_{\tau}(t):=
\begin{cases}
v^0 & \mbox{for} \quad t\in[0,t_{\frac{1}{2}}] \\
v^n & \mbox{for} \quad t\in(t_{n-\frac{1}{2}},t_{n+\frac{1}{2}}] \quad n=1,2,\ldots,N-1\\
v^N & \mbox {for} \quad t\in(t_{N-\frac{1}{2}},t_N],
\end{cases}
\cr
& & \hat{v}_{\tau}(t):=
\begin{cases}
v^0   & \mbox{for} \quad t\in[0,t_{\frac{1}{2}}] \\
v^n +\frac{t-t_{n+\frac{1}{2}}}{\tau_{n+\frac{1}{2}}}(v^n-v^{n-1}) & \mbox{for} \quad t\in(t_{n-\frac{1}{2}},t_{n+\frac{1}{2}}]\\
    & \hspace*{0.5cm} n=1,2,\ldots,N-1\\
v^{N-1} & \mbox{for} \quad t\in(t_{N-\frac{1}{2}},t_N],
\end{cases}
\cr
& & \eta_\tau(t):=
\begin{cases}
\eta^0 & \mbox{for} \quad t\in[0,t_{\frac{1}{2}}] \\
\eta^n & \mbox{for} \quad t\in(t_{n-\frac{1}{2}},t_{n+\frac{1}{2}}] \quad n=1,2,\ldots,N-1\\
\eta^N & \mbox {for} \quad t\in(t_{N-\frac{1}{2}},t_N],
\end{cases}
\cr
& & f_\tau(t):=
\begin{cases}
0   & \mbox{for} \quad t\in[0,t_{\frac{1}{2}}] \\
f^n & \mbox{for} \quad t\in(t_{n-\frac{1}{2}},t_{n+\frac{1}{2}}] \quad n=1,2,\ldots,N-1\\
0   & \mbox {for} \quad t\in(t_{N-\frac{1}{2}},t_N].
\end{cases}
\end{eqnarray*}
  Note that the above functions depend on the parameter $N$.
  However,
  for the sake of simplicity, we omit the symbol $N$ in their notation.\\
  It is well known (see Remark 8.15 in \cite{Roubicek2005}) that
\begin{align}
  f_\tau \to f \quad \textrm{in}~\mathcal{W}^*\,\,\,\text{as}\,\,\,N\to\infty. \label{21}
\end{align}

\section{Convergence of the scheme}\label{Convergence_of_numerical_scheme}
  In this section we study the behaviour of sequences $u_\tau$, $v_\tau$, $\hat{u}_\tau$, $\eta_\tau$ and $f_\tau$  with respect to the increasing number of time
  grids $N$. In what follows, all convergences, unless it is specified differently, will be understood with respect to $N\to\infty$. In
  particular, we impose the following, additional assumptions.\\

\noindent ${\underline{H(\tau)}:}$
\begin{itemize}
\item[(1)] $\tau_{max}\to 0$, \item[(2)] $\tau_{max}\leqslant D\tau_{min}$ with a constant $D>0$ independent on $N$, \item[(3)]
$\sigma_\tau \to 0.$
\end{itemize}

\noindent ${\underline{H_1}:}$
\begin{itemize}
  \item[(1)] $u_\tau^0\to u_0$ in $V$,
  \item[(2)] $v_\tau^0\to v_0$ in $H$,
  \item[(3)] $\sup\limits_{N\in\mathbb{N}} \tau_{max} \|v_{\tau}^0\|_V^2 <\infty$.
\end{itemize}

We introduce the integral operator $K\colon \mathcal{V}\to \mathcal{V}$ defined by
\[
  (Kw)(t)\colon =\int_0^t w(s)\,ds \ \textrm{for all}\ w\in\mathcal{V}\quad\textrm{for}\ t\in [0,T].
\]

\begin{Lemma}[Convergences]\label{convergences}
  Under hypotheses $H(A)$, $H(B)$, $H(M)$, $H(\gamma)$, $H(\tau)$, $H_0$, and $H_1$, there exists $u\in C(0,T;V)$ and $v\in \mathcal{W}$ with
  $v'\in\mathcal{W}^*$ such that $u=u_0+Kv$ and for a subsequence, we have
\begin{enumerate}
  \item[(a)] $u_\tau\to u$ weakly$^*$ in  $L^\infty(0,T;V)$,
  \item[(b)] $v_\tau \to v$ weakly in $\mathcal{W}$ and weakly$^*$ in $L^\infty(0,T;H)$,
  \item[(c)] $\hat{v}_\tau \to v$ weakly in $\mathcal{W}$ and weakly$^*$ in $L^\infty(0,T;H)$,
  \item[(d)] $\hat{v}_\tau'\to v'$ weakly in $\mathcal{W}^*$,
  \item[(e)] $Kv_\tau \to Kv$ weakly$^*$ in $L^\infty(0,T;W)$,
  \item[(f)] $u_0 + Kv_\tau -u_\tau \to 0$ in $L^r(0,T;V)$ for $r\in[1,\infty)$,
  \item[(g)] $\eta_\tau \to \eta$ weakly in $\mathcal{U}^*$,
  \item[(h)] $\hat{v}_\tau \to v$ in $L^r(0,T;H)$ for $r\in[1,\infty)$,
  \item[(i)] $v_\tau \to v$ in $L^r(0,T;H)$ for $r\in[1,\infty)$,
  \item[(j)] $u_\tau \to u$ in $L^r(0,T;H)$ for $r\in[1,\infty)$,
  \item[(k)] $u_0 + Kv_\tau \to u$ w $C(0,T;H)$,
  \item[(l)] $v_\tau$ is bounded in $M^{p,q}(0,T;W;W^*)$.
\end{enumerate}
\end{Lemma}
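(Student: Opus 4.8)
The plan is to read off all the required bounds from the \textit{a priori} estimate of Lemma~\ref{apriori} together with $H(\tau)$ and $H_1$, to pass to weak limits by compactness, to identify those limits, and finally to upgrade to strong convergence. Rewriting \eqref{ap1} in terms of the interpolants gives, uniformly in $N$: $\|u_\tau\|_{L^\infty(0,T;V)}\le c$; $\|v_\tau\|_{\mathcal W}+\|v_\tau\|_{L^\infty(0,T;H)}\le c$; $\|\eta_\tau\|_{\mathcal U^*}\le c$; and, comparing neighbouring step sizes via $H(\tau)(2)$, $\|\hat v_\tau\|_{\mathcal W}+\|\hat v_\tau\|_{L^\infty(0,T;H)}\le c$. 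From \eqref{ap2} one gets $\|\hat v_\tau'\|_{\mathcal W^*}\le c$ and, since $q=\frac{p}{p-1}\in(1,2]$ and $\tau_{j+\frac12}\le\tau_{max}$, $\sum_j\|v^j-v^{j-1}\|_{W^*}^q\le\tau_{max}^{\,q-1}\sum_j\tau_{j+\frac12}^{\,1-q}\|v^j-v^{j-1}\|_{W^*}^q\le c$, so (as $v_\tau$ is piecewise constant and $q>1$) $\|v_\tau\|_{BV^q(0,T;W^*)}\le c$, which together with the $\mathcal W$-bound is exactly (l).

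Passing to a subsequence (reflexivity of $\mathcal W$ and $\mathcal W^*$, the duality $L^\infty(0,T;X)=(L^1(0,T;X^*))^*$, and Banach--Alaoglu) we obtain $u\in L^\infty(0,T;V)$, $v,w\in\mathcal W$, $\xi\in\mathcal W^*$, $\eta\in\mathcal U^*$ with (a), (b), (g), with $\hat v_\tau\to w$ weakly in $\mathcal W$ and weakly-$*$ in $L^\infty(0,T;H)$ and $\hat v_\tau'\to\xi$ weakly in $\mathcal W^*$. Since $p\ge2$ we have $\mathcal W\hookrightarrow L^2(0,T;H)$, and $\|v_\tau-\hat v_\tau\|_{L^2(0,T;H)}^2\le\tau_{max}\sum_j|v^j-v^{j-1}|_H^2\le c\,\tau_{max}\to0$ by \eqref{ap1}, so $w=v$, i.e. (c); letting $N\to\infty$ in $\int_0^T\hat v_\tau\varphi'=-\int_0^T\hat v_\tau'\varphi$ for $\varphi\in C_c^\infty(0,T)$ yields $\xi=v'\in\mathcal W^*$, i.e. (d); and $K\in\mathcal L(\mathcal W)$ with $\|Kv_\tau\|_{L^\infty(0,T;W)}\le T^{1/q}\|v_\tau\|_{\mathcal W}\le c$ gives $Kv_\tau\to Kv$ weakly-$*$ in $L^\infty(0,T;W)$, i.e. (e). For the strong convergences: by (l) and Proposition~\ref{kalita_compactness} applied with $W\subset H\subset W^*$ (first embedding compact, $W$ reflexive), $\{v_\tau\}$ is relatively compact in $L^p(0,T;H)$, its limit there being $v$, so $v_\tau\to v$ in $L^p(0,T;H)$; interpolating with the $L^\infty(0,T;H)$ bound gives (i), and then (h) follows from (i) and $\|v_\tau-\hat v_\tau\|_{L^2(0,T;H)}\to0$ interpolated in the same way. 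Finally $|(Kv_\tau-Kv)(t)|_H\le\int_0^T|v_\tau-v|_H\,ds\to0$ uniformly in $t$, so $u_0+Kv_\tau\to u_0+Kv$ in $C(0,T;H)$.

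It remains to prove the consistency estimate (f), which I regard as the heart of the matter. Expanding $u^n=u^0_\tau+\sum_{j=0}^{n-1}\tau_{j+1}v^j$ and integrating $v_\tau$ explicitly, one finds for $t\in(t_{n-\frac12},t_{n+\frac12}]$
\[
  (u_0+Kv_\tau)(t)-u_\tau(t)=(u_0-u^0_\tau)-\frac{\tau_1}{2}\,v^0+\sum_{k=1}^{n-1}\frac{\tau_k-\tau_{k+1}}{2}\,v^k+(t-t_{n-\frac12})\,v^n .
\]
Here $u_0-u^0_\tau\to0$ in $V$ by $H_1(1)$; $\|\frac{\tau_1}{2}v^0\|_V\le\frac12\sqrt{\tau_{max}}\,(\tau_{max}\|v^0_\tau\|_V^2)^{1/2}\to0$ by $H_1(3)$ and $\tau_{max}\to0$; the last term has $L^p(0,T;V)$-norm $\le c\,\tau_{max}(1+\|v_\tau\|_{\mathcal W})\to0$; and the accumulated term is controlled uniformly in $t$ by the Cauchy--Schwarz inequality,
\[
  \Big\|\sum_{k=1}^{n-1}\frac{\tau_k-\tau_{k+1}}{2}\,v^k\Big\|_V
  \le\Big(\frac12\sum_{k}\frac{(\tau_{k+1}-\tau_k)^2}{\tau_{k+1}+\tau_k}\Big)^{1/2}
     \Big(\sum_{k}(\tau_k+\tau_{k+1})\|v^k\|_V^2\Big)^{1/2}
  \le c\,\sqrt{\sigma(\tau)}\to0 ,
\]
where $\sum_k\tau_{k+\frac12}\|v^k\|_V^2\le c\sum_k\tau_{k+\frac12}(1+\|v^k\|_W^p)\le c$ by \eqref{ap1} (using $W\subset V$) and $\sigma(\tau)\to0$ by $H(\tau)(3)$. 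Hence $u_0+Kv_\tau-u_\tau\to0$ in $L^p(0,T;V)$, and being bounded in $L^\infty(0,T;V)$ it converges in $L^r(0,T;V)$ for every $r\in[1,\infty)$; this is (f). Comparing weak-$*$ limits in $L^\infty(0,T;V)$ through (a), (e), (f) gives $u=u_0+Kv$, and since $v\in\mathcal W\subset L^p(0,T;V)$ we obtain $u=u_0+Kv\in C(0,T;V)$; then (k) is the $C(0,T;H)$-convergence of the previous paragraph combined with $u=u_0+Kv$, and (j) follows from (k) and (f).

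The main obstacle is precisely estimate (f): on a nonuniform grid $u_0+Kv_\tau-u_\tau$ does not vanish, and bounding the accumulated mismatch $\sum_k\frac{\tau_k-\tau_{k+1}}{2}v^k$ uniformly in time forces one to combine the grid-regularity quantity $\sigma(\tau)\to0$ (hypothesis $H(\tau)(3)$), the weighted bound $\sum_k\tau_{k+\frac12}\|v^k\|_W^p\le c$ from \eqref{ap1}, and $H_1(3)$. A second delicate point is that the strong convergence in $H$ has to be obtained through $v_\tau$ by means of Proposition~\ref{kalita_compactness} and bound (l): $v_\tau$ has no time derivative in $L^q$, so a classical Aubin--Lions argument does not apply to it directly, and one must route through $\hat v_\tau$ only for the comparison estimate $\|v_\tau-\hat v_\tau\|_{L^2(0,T;H)}\to0$.
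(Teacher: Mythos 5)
Your overall strategy coincides with the paper's: the same \textit{a priori} bounds, the same decomposition of $u_0+Kv_\tau-u_\tau$ with the Cauchy--Schwarz/$\sigma(\tau)$ estimate for the accumulated grid mismatch, and the same identification and upgrading steps. Two harmless deviations: you obtain $(i)$ directly from Proposition~\ref{kalita_compactness} applied to $v_\tau$ (using $(l)$) and then deduce $(h)$ by comparison, whereas the paper first applies Lions--Aubin to $\hat v_\tau$ to get $(h)$ and then deduces $(i)$; and you get $(e)$ from weak continuity of the bounded linear operator $K$ on $\mathcal W$ rather than from the explicit Fubini computation. Both variants are fine.

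There is, however, one genuine flaw, in your proof of $(l)$. You assert that since $v_\tau$ is piecewise constant and $q>1$, the bound $\sum_j\|v^j-v^{j-1}\|_{W^*}^q\leqslant c$ already yields $\|v_\tau\|_{BV^q(0,T;W^*)}\leqslant c$. This is false: the $BV^q$ seminorm is a supremum over \emph{all} partitions, and for $q>1$ a coarse partition interval straddling several jumps contributes $\|v^{m_i}-v^{m_{i-1}}\|^q$, which can strictly exceed the sum of the $q$-th powers of the individual jumps it contains (already for two equal scalar jumps of size $1$ one gets $2^q>2$). The correct passage, as in \eqref{eqBS_22}, uses the convexity estimate $\|v^{m_i}-v^{m_{i-1}}\|^q\leqslant(m_i-m_{i-1})^{q-1}\sum_{k=m_{i-1}+1}^{m_i}\|v^k-v^{k-1}\|^q$, which introduces a factor of order $N^{q-1}$; this factor must then be absorbed by writing $N^{q-1}\tau_{max}^{q-1}\leqslant(DN\tau_{min})^{q-1}\leqslant(DT)^{q-1}$, i.e.\ by invoking hypothesis $H(\tau)(2)$, which you never use at this point. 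Your intermediate bound $\sum_j\|v^j-v^{j-1}\|_{W^*}^q\leqslant c\,\tau_{max}^{q-1}$ happens to retain exactly the factor $\tau_{max}^{q-1}$ needed for this absorption, so the gap is repairable with what you already have on the page, but as written the step from the sum of jump $q$-powers to the $BV^q$ seminorm is not justified.
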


\begin{proof}
  By estimates \eqref{ap1} and \eqref{ap2}, we easily get
\begin{align}
&\|u_\tau\|_{L^\infty(0,T;V)}\leq c,\label{ap3}\\
&\|v_\tau\|_{\mathcal{W}}+\|v_\tau\|_{L^\infty(0,T;H)}\leq c,\label{ap4}\\
&\|\hat{v}_\tau\|_{\mathcal{W}}+\|\hat{v}_\tau\|_{L^\infty(0,T;H)}\leq c,\label{ap5}\\
&\|\hat{v}'_\tau\|_{\mathcal{W}^*}\leq c,\label{ap6}\\
&\|\eta_\tau\|_{\mathcal{U}^*}\leq c,\label{ap7}
\end{align}
  The convergences $(a)$-$(d)$ and $(g)$ follow from \eqref{ap3}-\eqref{ap6} and \eqref{ap7}, respectively.
  However, we need to show that limits obtained in $(b)$ and $(c)$
  coincide. Note that
\begin{eqnarray}
 & & \|\hat{v}_\tau -v_\tau\|_\mathcal{H}^2 = \sum_{j=1}^{N-1} \int_{t_{j-\frac{1}{2}}}^{t_{j+\frac{1}{2}}}
   \bigg|\frac{t-t_{j+\frac{1}{2}}}{\tau_{j+\frac{1}{2}}}(v^j-v^{j-1})\bigg|^2 \,dt \cr
 & = & \sum_{j=1}^{N-1}
   \bigg|\frac{v^j-v^{j-1}}{\tau_{j+\frac{1}{2}}}\bigg|^2 \int_{t_{j-\frac{1}{2}}}^{t_{j+\frac{1}{2}}} (t-t_{j+\frac{1}{2}})^2 \,dt = \frac{1}{3}
   \sum_{j=1}^{N-1} \bigg|\frac{v^j-v^{j-1}}{\tau_{j+\frac{1}{2}}}\bigg|^2 \tau_{j+\frac{1}{2}} \cr
 & \leqslant & \frac{1}{3}\tau_{max} \sum_{j=1}^{N-1}
\left|v^j-v^{j-1}\right|^2 \to 0.\label{15}
\end{eqnarray}
  So $\hat{v}_\tau -v_\tau \to 0$ in $\mathcal{H}$. Now since $v_\tau-\hat{v}_{\tau} \to v-\hat{v}$
  weakly in $\mathcal{W}$, it follows that
  $v_\tau-\hat{v}_{\tau} \to v-\hat{v}$ weakly in $\mathcal{H}$. From the uniqueness of the limit we obtain $v=\hat{v}$.

  Now we prove $(e)$. Let $g\in L^1(0,T;W^*)$. Then
\begin{eqnarray*}
  & & \langle g,K{v}_\tau -Kv \rangle_{L^1(0,T;W^*)\times L^{\infty}(0,T;W)}\cr
  & = &\int_0^T \bigg\langle g(t),\int_0^t(v_\tau(s)-v(s))\,ds\bigg\rangle_{W^*\times W}\,dt\cr
  & = & \int_0^T \int_0^t\left\langle g(t),v_\tau(s)-v(s)\right\rangle_{W^*\times W} \,ds\,dt \cr
  & = & \int_0^T\left(\int_s^T\langle g(s),v_\tau(s)-v(s)\rangle_{W^*\times W}
       \,dt\right)\, ds \cr
  & = & \int_0^T \left\langle\int_s^T g(t)\,dt,v_\tau(s)-v(s)\right\rangle_{W^*\times W} \,ds\cr
  & = & \int_0^T \langle G(s),v_\tau(s)-v(s)\rangle_{W^*\times W} \,ds \cr
  & = & \langle G,v_\tau -v\rangle_{\mathcal{W}^*\times\mathcal{W}} \to 0,
\end{eqnarray*}
  where $G(s)=\int_s^T g(t)dt$ for $s\in [0,T]$.
  Since $G\in L^\infty(0,T;W^*)$, we have $G\in \mathcal{W}^*$.
  This proves $(e)$.

  To prove $(f)$ we first estimate integrals
\begin{eqnarray}
  \nonumber
 & & \int_0^{t_{\frac{1}{2}}} \|u_0+v^0t\|_V^2\,dt \leqslant \int_0^{t_{\frac{1}{2}}} (\|u_0\|_V + t\|v^0\|_V)^2 \,dt\\
 \nonumber
 & \leqslant & \int_0^{t_{\frac{1}{2}}} 2(\|u_0\|_V^2 + t^2\|v^0\|_V^2)\,dt \\
 \label{KB_2}
 & = & 2t_{\frac{1}{2}} \|u_0\|_V^2+\frac{2}{3}\tau_{\frac{1}{2}}^3\|v^0\|_V^2 \leqslant c\tau_{max}(\|u_0\|_V^2 + \|v^0\|_V^2).
\end{eqnarray}
  For $n=1,...,N-1$, we have
\begin{eqnarray*}
 & & I_n = \int_{t_{n-\frac{1}{2}}}^{t_{n+\frac{1}{2}}} \|u_0 + (Kv_\tau)(t)-u_\tau(t)\|_V^2 \,dt \cr
 & = & \int_{t_{n-\frac{1}{2}}}^{t_{n+\frac{1}{2}}} \bigg\|u_0 + v^0\tau_{\frac{1}{2}} + \sum_{j=1}^{n-1}\tau_{j+\frac{1}{2}} v^j + (t-t_{n-\frac{1}{2}})v^n -u^0-\sum_{j=1}^{n-1}\tau_{j+1}v^j\bigg\|_V^2 \,dt\cr
 & = & \int_{t_{n-\frac{1}{2}}}^{t_{n+\frac{1}{2}}} \bigg\|u_0-u^0 + v^0\tau_{\frac{1}{2}} +
     \sum_{j=1}^{n-1}(\tau_{j+\frac{1}{2}}-\tau_{j+1})v^j+(t-t_{n-\frac{1}{2}})v^n\bigg\|_V^2 \, dt \cr
 & \leqslant &c \int_{t_{n-\frac{1}{2}}}^{t_{n+\frac{1}{2}}}\biggl( \|u_0-u^0\|_V^2 +\tau_{\frac{1}{2}}\|v^0\|_V^2 + \biggl(\sum_{j=1}^{n-1}|\tau_{j+\frac{1}{2}}-\tau_{j+1}|\|v^j\|_V\biggr)^2\cr
 & &
 +(t-t_{n-\frac{1}{2}})^2\|v^n\bigr\|_V^2\biggr)\,dt\cr
 & \leqslant &
  c\biggl[\tau_{n+\frac{1}{2}} \|u_0-u^0\|_V^2 +  \tau_{n+\frac{1}{2}} \tau_{\frac{1}{2}}^2 \|v^0\|_V^2\cr
 & &+ \tau_{n+\frac{1}{2}}
    \bigg(\sum_{j=1}^{n-1} |\tau_{j+\frac{1}{2}}-\tau_{j+1}|\|v^j\|_V\bigg)^2+ \frac{1}{3}\tau_{n+\frac{1}{2}}\|v^n\|_V^2 \biggr].
\end{eqnarray*}
  Hence
\begin{eqnarray}\label{KB_4}
  & &
 \sum_{n=1}^{N-1} I_n
 \ \leqslant \ c\bigg[T \|u_0-u^0\|_V^2 + \tau_{max}\|v^0\|_V^2\cr
 & & + T\bigg(\sum_{j=1}^{N-1} |\tau_{j+{\frac{1}{2}}} -\tau_{j+1}|\|v^j\|_V\bigg)^2 + \tau_{max}^2\sum_{n=1}^{N-1}\tau_{n+\frac{1}{2}} \|v^n\|_V^2 \bigg].
\end{eqnarray}
  Finally, we estimate the integral
\begin{eqnarray}\label{KB_5}
  & &
   \int_{t_{N-\frac{1}{2}}}^T \bigg\|u_0+v^0\tau_{\frac{1}{2}} + \sum_{j=1}^{N-1} \tau_{j+\frac{1}{2}} v^j\bigg\|_V^2 \,dt\cr
  & \leqslant & 3\tau_{max} \biggl(\|u_0\|_V^2 + \tau_{max}\|v^0\|_V^2 + \bigg(\sum_{j=1}^{N-1} \tau_{j+\frac{1}{2}}\|v^j\|_V\biggr)^2\bigg)\cr
  & \leqslant & 3\tau_{max}\biggl(\|u_0\|_V^2 + \tau_{max}\|v^0\|_V^2 + N\sum_{j=1}^{N-1} \tau_{j+\frac{1}{2}}^2 \|v^j\|_V^2 \biggr) \cr
  & \leqslant & 3\tau_{max} \bigg(\|u_0\|_V^2 + \tau_{max}\|v^0\|_V^2 + DT \sum_{j=1}^{N-1} \tau_{j+\frac{1}{2}} \|v^j\|_V^2\bigg).
\end{eqnarray}
Now, using Cauchy-Schwartz inequality, we estimate
\begin{eqnarray}\label{KB_6}
 & & \bigg(\sum_{j=1}^{N-1} |\tau_{j+\frac{1}{2}} - \tau_{j+1} |\|v^j\|_V\bigg)^2 = \bigg(\sum_{j=1}^{N-1} \bigg|\frac{\tau_j-\tau_{j+1}}{2}
  \bigg|\|v^j\|_V\bigg)^2\cr
 & = & \bigg(\sum_{j=1}^{N-1} \frac{\tau_j-\tau_{j+1}}{2\sqrt{\tau_{j+\frac{1}{2}}}} \sqrt{\tau_{j+\frac{1}{2}}}
   \|v^j\|_V\bigg)^2\cr
 & \leqslant & \bigg(\sum_{j=1}^{N-1}\frac{(\tau_j-\tau_{j+1})^2}{4\tau_{j+\frac{1}{2}}}\bigg) \bigg(\sum_{j=1}^{N-1} \tau_{j+\frac{1}{2}} \|v^j\|_V^2\bigg)\cr
 & = & \sigma(\tau)\sum_{j=1}^{N-1} \tau_{j+\frac{1}{2}} \|v^j\|_V^2.
\end{eqnarray}
Since $p\geqslant 2$, we have $s^2\leqslant1+s^p$ for all $s\in\real$. Thus, we have
\begin{equation}\label{KB_7}
 \sum_{j=1}^{N-1} \tau_{j+\frac{1}{2}} \|v^j\|_V^2\leqslant\|i_{WV}\|_{\mathcal{L}(W,V)}\bigg(T+\sum_{j=1}^{N-1}\tau_{j+\frac{1}{2}} \|v^j\|_W^p\bigg).
\end{equation}
 Therefore, by (\ref{KB_2})-(\ref{KB_7}) and hypothesis $H(\tau)$ we obtain $u_0+Kv_\tau -u_\tau \to 0$ in $L^2(0,T;V)$. Since $v_\tau$ is bounded in
 $\mathcal{W}$ it is also bounded in $\mathcal{V}$, so $Kv_\tau$ in bounded in $L^\infty (0,T;V)$. Moreover $u_\tau$ is bounded in $L^\infty(0,T;V)$. So
 $u_0+Kv_\tau-u_\tau$ is bounded in $L^\infty (0,T;V)$. For $r>2$, we have
\begin{eqnarray*}
 & & \|u_0+Kv_\tau -u_\tau\|_{L^r(0,T;V)}^r\cr
 & = &\int_0^T \|u_0+Kv_\tau(t) -u_\tau(t)\|_V^{r-2} \|u_0+Kv_\tau(t) -u_\tau(t)\|_V^2\,dt \cr
 & \leqslant & \|u_0+Kv_\tau -u_\tau\|_{L^\infty(0,T;V)} \|u_0+Kv_\tau -u_\tau\|_{L^2(0,T;V)}^2 \to 0.
\end{eqnarray*}
  Therefore $u_0+Kv_\tau-u_\tau \to 0$ in $L^r(0,T;V)$ for all $r\in [1,\infty)$, which completes the proof of $(f)$.

  From $(a)$, $(e)$, $(f)$ and uniqueness of the weak limit in $L^2(0,T;V)$ we obtain
\begin{equation}\label{KB_9}
 u=u_0 + Kv
\end{equation}
  and, in particular, $u\in C(0,T;V)$. From $(c)$, $(d)$, compactness of embedding $W\subset H$ and the Lions-Aubin lemma, we have
\[
  \hat{v}_\tau \to v \quad \textrm{in}\ L^p(0,T;H).
\]
  Again, since $\hat{v}_\tau$ is bounded in $L^\infty (0,T;H)$ it follows that
\[
  \hat{v}_\tau \to v \quad \textrm{in}\ L^r(0,T;H), \qquad \textrm{for all}\ r\in[1;\infty),
\]
  which proves $(h)$.

  From \eqref{15} and $(h)$ we have $v_\tau \to v$ in $L^2(0,T;H)$,
  and also $v_\tau \to v$ in $L^r(0,T;H)$ with $r\in[1,\infty]$, since
  $v_\tau$ is bounded in $L^\infty(0,T;H)$.

  Thus $(i)$ holds. Now, using $(i)$, we calculate,
\begin{eqnarray}
  \nonumber
 & & \|Kv_\tau-Kv\|_{C(0,T;H)}=\max_{t\in [0,T]} \bigg\|\int_0^t v_\tau(s)\,ds - \int_0^t v(s)\,ds\bigg\|_H\\
  \nonumber
 & \leqslant & \max_{t\in [0,T]}\int_0^t \|v_\tau(s)-v(s)\|_H \, ds = \int_0^T \|v_\tau(s)-v(s)\|_H \, ds\\
  \label{KB_8}
 & = & \|v_\tau -v\|_{L^1(0,T;H)}\to 0.
\end{eqnarray}
  From (\ref{KB_9}) we have
\begin{eqnarray}
  \nonumber
 & & \|u-u_\tau\|_{L^r(0,T;H)}=\|u_0+Kv-u_\tau\|_{L^r(0,T;H)}\\
  \nonumber
 & = & \|u_0+Kv_\tau-u_\tau+Kv-Kv_\tau\|_{L^r(0,T;H)}\leqslant \|u_0+Kv_\tau-u_\tau\|_{L^r(0,T;H)}\\
  \nonumber
 & & +\|Kv-Kv_\tau\|_{L^r(0,T;H)}\leqslant\|u_0+Kv_\tau-u_\tau\|_{L^r(0,T;H)}\\
  \label{KB_10}
 & & +T^{\frac{1}{r}}\|Kv-Kv_\tau\|_{C(0,T;H)}.
\end{eqnarray}
  Combining $(f)$, (\ref{KB_8}) and (\ref{KB_10}), we obtain $(j)$.

  Moreover, by (\ref{KB_9}) we have
  $\|u_0+Kv_\tau-u\|_{C(0,T;H)}=\|Kv_\tau-Kv\|_{C(0,T;H)}$. This together with (\ref{KB_8}) gives $(k)$.

  It remains to show $(l)$. Taking into account (\ref{ap4}), it is enough to estimate the seminorm $\|v_\tau\|_{BV^q(0,T;W^*)}$. Since the function $v_\tau$ is
  piecewise constant, the seminorm will be measured by means of jumps between elements of sequence $\{v_\tau^{k}\}_{k=1}^{N}$. Namely, let
  $\{m_i\}_{i=0}^{n}\subset\{1,...,N\}$ be an increasing sequence of numbers such that $m_0=0$, $m_{n}=N$
  and
\begin{align}\label{eq82}
  \|v_\tau\|_{BV^q(0,T;W^*)}^q = \sum^n_{i=1} \| v^{m_i}_\tau - v^{m_{i-1}}_\tau\|^q_{W^*}.
\end{align}
  In what follows, we estimate
\begin{eqnarray}\label{eqBS_22}
  & &  \sum^n_{i=1} \| v^{m_i}_\tau - v^{m_{i-1}}_\tau\|^q_{W^*}\cr
  & \leqslant & \sum^n_{i=1} \bigg( (m_i - m_{i-1} )^{q-1} \sum^{m_i}_{k = m_{i-1}+1 }
  \| v^{k}_\tau - v^{k-1}_\tau\|^q_{W^*}   \bigg) \cr
  & \leqslant & \bigg( \sum^n_{i=1} (m_i - m_{i-1} )^{q-1} \bigg) \bigg( \sum^n_{i=1} \sum^{m_i}_{k = m_{i-1}+1 }  \| v^{k}_\tau -
     v^{k-1}_\tau\|^q_{W^*} \bigg) \cr
 & \leqslant & N^{q-1} \sum^N_{k=1} \| v^{k}_\tau - v^{k-1}_\tau\|^q_{W^*} = N^{q-1} \tau_{j+\frac{1}{2}}^{q} \sum^N_{k=1}
   \bigg\|\frac{v^{k}_\tau - v^{k-1}_\tau}{\tau_{j+\frac{1}{2}}} \bigg\|^q_{W^*} \cr
 & \leqslant & N^{q-1} \tau_{max}^{q-1} \tau_{j+\frac{1}{2}}\sum^N_{k=1} \bigg\|\frac{v^{k}_\tau - v^{k-1}_\tau}{\tau_{j+\frac{1}{2}}} \bigg\|^q_{W^*}\cr
 & \leqslant & N^{q-1} D^{q-1}\tau_{min}^{q-1} \tau_{j+\frac{1}{2}}\sum^N_{k=1} \bigg\| \frac{v^{k}_\tau -v^{k-1}_\tau}{\tau_{j+\frac{1}{2}}} \bigg\|^q_{W^*} \cr
 & \leqslant & \bar{C} T^{q-1} \tau_{j+\frac{1}{2}} \sum^N_{k=1} \bigg\| \frac{v^{k}_\tau - v^{k-1}_\tau}{\tau_{j+\frac{1}{2}}} \bigg\|_{W^*}^q.
\end{eqnarray}
  We now combine \eqref{ap2} with \eqref{eq82} and (\ref{eqBS_22}) to see that $\|v_\tau\|^q_{BV^q(0,T;W^*)}$ is bounded. This completes the proof of the lemma.
\end{proof}

  Now we formulate the existence theorem which is the main result of the paper.

\begin{Theorem}\label{theorem_1}
Let hypotheses $H(A)$, $H(B_0)$, $H(C)$, $H(\gamma)$, $H_0$ hold and $u_0\in V, v_0\in H, f\in L^q(0,T;W^*)$. Then Problem {$\mathcal{P}$} has a solution such
that $u\in C([0,T];V)$.
\end{Theorem}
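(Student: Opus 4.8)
The plan is to realize the solution as a limit of Rothe approximations. First I would fix, for each $N\in\nat$, a time grid satisfying $H(\tau)$ and approximations $u^0_\tau,v^0_\tau\in W$ satisfying $H_1$ (this is possible, e.g.\ with uniform grids and mollifications of $u_0\in V$, $v_0\in H$ chosen to converge slowly enough). For each $N$ the existence theorem for Problem~$\mathcal{P}_\tau$ provides $\{u^n\},\{v^n\},\{\eta^n\}$, and since $\tau_{max}\to0$ eventually satisfies \eqref{eq_KB_1}, the a priori bounds of Lemma~\ref{apriori} hold, whence Lemma~\ref{convergences} applies. Passing to the subsequence it supplies, I obtain $v\in\mathcal{W}$ with $v'\in\mathcal{W}^*$, $\eta\in\mathcal{U}^*$, and $u:=u_0+Kv\in C([0,T];V)$ with $u'=v$, $u''=v'$, together with the convergence modes $(a)$--$(l)$. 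The representation $u=u_0+Kv$ gives $u(0)=u_0$; for $u'(0)=v_0$ I would note that $(c)$ and $(d)$ give $\hat v_\tau\to v$ weakly in $\{w\in\mathcal{W}:w'\in\mathcal{W}^*\}$, which embeds continuously into $C([0,T];H)$, so evaluation at $t=0$ is weakly continuous; since $\hat v_\tau(0)=v^0_\tau\to v_0$ in $H$ by $H_1(2)$, uniqueness of the weak limit forces $v(0)=v_0$. The same argument gives $\hat v_\tau(T)\to v(T)$ weakly in $H$, which I will use below. This establishes \eqref{original2}.

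Next I would recast \eqref{discrete} in time-continuous form: with the step function $\theta_\tau(t):=t_n$ for $t\in(t_{n-\frac12},t_{n+\frac12}]$, it reads
\[
  \hat v_\tau'(t)+A(\theta_\tau(t),v_\tau(t))+B(\theta_\tau(t),u_\tau(t))+\gamma^*\eta_\tau(t)=f_\tau(t)
\]
for a.e.\ $t\in(t_{\frac12},t_{N-\frac12})$; the contributions of the two boundary strips (of total length $O(\tau_{max})$) are controlled by the bounds of Lemma~\ref{convergences} and vanish in the limit. Testing with $w\in\mathcal{W}$ and letting $N\to\infty$: the $\hat v_\tau'$ term converges by $(d)$; $\langle\gamma^*\eta_\tau,w\rangle=\langle\eta_\tau,\gamma w\rangle_{\mathcal{U}^*\times\mathcal{U}}$ converges by $(g)$; $f_\tau\to f$ in $\mathcal{W}^*$ by \eqref{21}; writing $B=B_0+C$, the autonomous linear bounded part $B_0$ passes to the limit because $u_\tau\to u$ weakly in $\mathcal{V}$ (from $(a)$), while $H(C)(iii)$ together with $u_\tau\to u$ in $L^1(0,T;H)$ (item $(j)$) and the continuity $H(C)(i)$ with $\theta_\tau\to\mathrm{id}$ give $C(\theta_\tau(\cdot),u_\tau(\cdot))\to C(\cdot,u)$ strongly in $\mathcal{W}^*$. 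By $H(A)(ii)$ the sequence $A(\theta_\tau(\cdot),v_\tau(\cdot))$ is bounded in $\mathcal{W}^*$, hence converges weakly along a further subsequence; the main point is to identify this limit as $\mathcal{A}u'=\mathcal{A}v$.

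This identification is the step I expect to be the hard part, and the plan is to obtain it from Lemma~\ref{kalita}: item $(l)$ supplies the $M^{p,q}(0,T;W,W^*)$ bound and $(b)$ the weak convergence $v_\tau\to v$ in $\mathcal{W}$, so one only needs $\limsup_N\langle\mathcal{A}v_\tau,v_\tau-v\rangle_{\mathcal{W}^*\times\mathcal{W}}\le 0$. Substituting for $A(\theta_\tau(\cdot),v_\tau(\cdot))$ from the recast scheme (and using that replacing $\theta_\tau(t)$ by $t$ does not change the weak limit, a consequence of $H(A)(i)$--$(ii)$ and $\tau_{max}\to0$), every resulting pairing except the one with $\hat v_\tau'$ is a strong--weak pairing and tends to $0$, so it suffices to show $\liminf_N\langle\hat v_\tau',v_\tau-v\rangle\ge 0$. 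I would split $\langle\hat v_\tau',v_\tau-v\rangle=\langle\hat v_\tau',v_\tau-\hat v_\tau\rangle+\langle\hat v_\tau',\hat v_\tau-v\rangle$; a direct computation, as in \eqref{15}, gives the first summand as $\tfrac12\sum_{j=1}^{N-1}|v^j-v^{j-1}|^2\ge0$, while the integration-by-parts formula valid in $C([0,T];H)$ gives the second as $\tfrac12|\hat v_\tau(T)|^2-\tfrac12|v^0_\tau|^2-\langle\hat v_\tau',v\rangle_{\mathcal{W}^*\times\mathcal{W}}$, whose liminf is at least $\tfrac12|v(T)|^2-\tfrac12|v_0|^2-\langle v',v\rangle=0$ by weak lower semicontinuity of $|\cdot|$ at $t=T$, by $H_1(2)$, and by the chain rule $\langle v',v\rangle=\tfrac12(|v(T)|^2-|v(0)|^2)$ with $v(0)=v_0$. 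Lemma~\ref{kalita} then yields $\mathcal{A}v_\tau\to\mathcal{A}v$ weakly in $\mathcal{W}^*$, and passing to the limit in the recast scheme gives $v'(t)+A(t,v(t))+B(t,u(t))+\gamma^*\eta(t)=f(t)$ in $\mathcal{W}^*$, that is \eqref{original3} with $u'=v$.

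It remains to verify \eqref{original4}. Applying $H(\gamma)$ to the bound $(l)$ yields $\gamma v_\tau\to\gamma v$ strongly in $\mathcal{U}$, hence a.e.\ in $t$ along a subsequence, while $(g)$ gives $\eta_\tau\to\eta$ weakly in $\mathcal{U}^*$ and \eqref{dis2} gives $\eta_\tau(t)\in M(\gamma v_\tau(t))$ a.e. The Aubin--Cellina type convergence theorem, whose hypotheses hold by $H(M)(i)$--$(iii)$, then gives $\eta(t)\in M(\gamma v(t))=M(\gamma u'(t))$ a.e. Collecting \eqref{original2}, \eqref{original3} and \eqref{original4}, the function $u=u_0+Kv\in C([0,T];V)$, with $u'=v\in\mathcal{W}$ and $u''=v'\in\mathcal{W}^*$, is a solution of Problem~$\mathcal{P}$ in the sense of Definition~\ref{def_solution}, which completes the argument.
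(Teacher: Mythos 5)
Your overall strategy coincides with the paper's: Rothe approximation, the \emph{a priori} bounds of Lemma~\ref{apriori}, the convergences of Lemma~\ref{convergences}, identification of the weak limit of the $A$-term via Lemma~\ref{kalita} after establishing $\limsup\langle\mathcal{A}v_\tau,v_\tau-v\rangle\leqslant 0$, and the Aubin--Cellina theorem for the inclusion. The treatment of the $\hat v_\tau'$ term (splitting off $\langle\hat v_\tau',v_\tau-\hat v_\tau\rangle=\tfrac12\sum_j|v^j-v^{j-1}|^2\geqslant 0$ and using weak lower semicontinuity of $|\cdot|$ at $t=T$ together with $v(0)=v_0$), the $\mathcal{C}$-term, the multivalued term, and the time-shift $\theta_\tau(t)\to t$ in $A$ all match the paper's argument.

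There is, however, one genuine gap: your claim that, after substituting from the scheme, ``every resulting pairing except the one with $\hat v_\tau'$ is a strong--weak pairing and tends to $0$'' fails for the $\mathcal{B}_0$-term. Lemma~\ref{convergences} gives $u_\tau\to u$ only weakly$^*$ in $L^\infty(0,T;V)$ (strong convergence holds only in $L^r(0,T;H)$, which cannot be paired against $B_0 u_\tau(t)\in V^*$), so $\mathcal{B}_0 u_\tau\to\mathcal{B}_0 u$ only \emph{weakly} in $\mathcal{V}^*$, while $v_\tau-v\to 0$ only weakly in $\mathcal{V}$; a weak--weak pairing need not vanish, and nothing in the available estimates upgrades either factor to strong convergence in the relevant space. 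What you actually need is the one-sided bound $\liminf\langle\mathcal{B}_0 u_\tau,v_\tau-v\rangle_{\mathcal{V}^*\times\mathcal{V}}\geqslant 0$, and the paper obtains it in \eqref{KB_15}--\eqref{28} by writing $u_\tau=u+(u_\tau-u_0-Kv_\tau)+K(v_\tau-v)$: the first two pieces produce pairings that do tend to $0$ (by weak convergence of $v_\tau$ and by Lemma~\ref{convergences}$(f)$), while the third is discarded with the correct sign thanks to the identity $\langle\mathcal{B}_0 Kw,w\rangle_{\mathcal{V}^*\times\mathcal{V}}=\tfrac12\|(Kw)(T)\|_B^2\geqslant 0$ of \eqref{27}, which is where the symmetry and strong positivity of $B_0$ from $H(B_0)$ enter. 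You should replace the ``strong--weak'' assertion for this term by this energy/monotonicity argument; the rest of your proof then goes through as written.
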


\begin{proof}
  We define Nemytskii operators
  $\mathcal{A}\colon \mathcal{W}\to \mathcal{W}^*$,
  $\mathcal{B}_0\colon \mathcal{V}\to \mathcal{V}^*$,
  $\mathcal{C}\colon \mathcal{V}\to \mathcal{W}^*$
  and $\bar{\gamma}\colon \mathcal{W}\to \mathcal{U}$, by
 \begin{eqnarray*}
  (\mathcal{A}v)(t)    & = & A(t,v(t))\quad\textrm{for all}\ t\in[0,T], \textrm{all}\ v\in \mathcal{W},\\
  (\mathcal{B}_0 v)(t) & = & B_0 v(t)\quad\textrm{for all}\ t\in[0,T], \ \textrm{all} \ v\in \mathcal{V},\\
  (\mathcal{C} v)(t)   & = & C(t, v(t))\quad \textrm{for all} \ t\in[0,T], \ \textrm{all} \ v\in \mathcal{V},\\
  (\bar{\gamma}v)(t)   & = & \gamma v(t)\quad\textrm{for all} \ tt\in[0,T], \ \textrm{all} \ v\in \mathcal{W}.\\
\end{eqnarray*}
Moreover, we approximate the operators $\mathcal{A}$ and $\mathcal{C}$ by their piecewise constant interpolates given by
\[
(\mathcal{A}_\tau v)(t):=
\begin{cases}
A(t_1,v(t))     & \textrm{for} \quad t\in[0,t_{\frac{1}{2}}] \\
A(t_n,v(t))     & \textrm{for} \quad t\in(t_{n-\frac{1}{2}},t_{n+\frac{1}{2}}] \quad n=1,2,\ldots,N-1\\
A(t_{N-1},v(t)) & \textrm{for} \quad t\in(t_{N-\frac{1}{2}},t_N],
\end{cases}
\]
\[
(\mathcal{C}_\tau v)(t):=
\begin{cases}
  C(t_1,v(t))     & \textrm{for} \quad t\in[0,t_{\frac{1}{2}}] \\
  C(t_n,v(t))     & \textrm{for} \quad t\in(t_{n-\frac{1}{2}},t_{n+\frac{1}{2}}] \quad n=1,2,\ldots,N-1\\
  C(t_{N-1},v(t)) & \textrm{for} \quad t\in(t_{N-\frac{1}{2}},t_N].
\end{cases}
\]

\noindent Let  $u_\tau$, $v_\tau$, $\hat{v}_\tau$, $\eta_\tau$ and $f_\tau$ be the functions defined in Section \ref{Discrete_problem}.
  Now, Problem ${\mathcal{P}}_\tau$ is equivalent to
\begin{eqnarray}
 & & \hat{v}_\tau ' + \mathcal{A}_\tau v_\tau + \mathcal{B}_0 u_\tau +\mathcal{C}_\tau u_\tau + \bar{\gamma}\eta_\tau = f_\tau\quad \textrm{in} \quad
  L^q(0,T;W^*), \label{Nem1}\\
 & & \eta_\tau(t)\in M(\gamma v_\tau(t))\quad \textrm{for a.e.} \,\,\, t\in(0,T).\label{Nem2}
\end{eqnarray}
  We will pass to the weak limit in $\mathcal{W}^*$ with \eqref{Nem1}. From Lemma~\ref{convergences}$ \,\,(d)$, we have
\begin{equation}\label{16}
  \hat{v}_\tau '\to v'\quad  \textrm{weakly in}\  \mathcal{W}^*.
\end{equation}
  Next, from Lemma~\ref{convergences}$(a)$, we obtain
\begin{equation} \label{17}
  u_\tau \to u \quad \textrm{weakly in}\ \mathcal{V}.
\end{equation}
  Thus, by continuity of $B_0$, we also have
\begin{equation} \label{17asas}
  \mathcal{B}_0 u_\tau\to \mathcal{B}_0 u \quad \textrm{weakly in}\  \mathcal{V}^*.
\end{equation}

Next we will show that $\mathcal{C}_\tau u_\tau \to \mathcal{C}u$ in $\mathcal{W}^* $. First we will show that $\mathcal{C}_\tau u\to \mathcal{C}u$ in
$\mathcal{W}^*$. We will use Lebesgue dominated convergence theorem. We show the pointwise convergence, which follows from $H(C)(i)$, namely
\[
  \|\mathcal{C}_\tau u(t)-\mathcal{C}u(t)\|_{W^*} = \|C(t_n,u(t))-C(t,u(t))\|_{W^*} \to 0\quad\textrm{for}\ t\in (0,T).
\]
  Next, we show boundedness, as follows
\begin{eqnarray*}
 & & \|\mathcal{C}_\tau u(t)-\mathcal{C} u(t)\|_{W^*}^q = \|C(t_n),u(t)-C(t,u(t))\|_{W^*}^q \cr
 & \leqslant &
     2^{q-1}\left(\|C(t_n,u(t))\|_{W^*}^q + \|C(t,u(t))\|_{W^*}^q \right) \leqslant 2^{q-1} 2\beta_C\bigg(1+\|u(t)\|_V^{\frac{2}{q}}\bigg)^q \cr
 & \leqslant & 2^{2q-1}\beta_C\left(1+\|u(t)\|_V^2\right)\leqslant c(1+\|u(t)\|_V^2).
\end{eqnarray*}
  The function $t\to c(1+\|u(t)\|_V^2)$ is integrable, because $u\in L^2(0,T;V)$. By the Lebesgue dominated convergence theorem,
  we have $\mathcal{C}_\tau u \to \mathcal{C}u$
  in $\mathcal{W}^*$. From hypotheses $H(C)(iii)$ and Lemma \ref{convergences}, we obtain
\begin{eqnarray*}
 & & \|\mathcal{C}_\tau u_\tau -\mathcal{C}_\tau u\|_{\mathcal{W}^*}^q =\int_0^T\left\|C(t_n,u_\tau(t))-C(t_n,u(t))\right\|_{W^*}^q \,dt \cr
 & \leqslant & \int_0^T\left(\alpha( \max\left\{\|u_\tau(t)\|_V,\|u(t)\|_V\right\}\right))^q |u_\tau(t)-u(t)|\,dt \cr
 & \leqslant & \alpha\left(\max\left\{\|u_\tau\|_{L^\infty(0,T;V)}^q, \|u\|_{L^\infty(0,T;V)}^q \right\}\right) \|u_\tau-u\|_{L^1(0,T;H)} \to 0.
\end{eqnarray*}
  Since $\|\mathcal{C}_\tau u_\tau-\mathcal{C}u\|_{\mathcal{W}^*}\leqslant \|\mathcal{C}_\tau u_\tau-\mathcal{C}_\tau u\|_{\mathcal{W}^*} + \|\mathcal{C}_\tau u
  -\mathcal{C}u\|_{\mathcal{W}^*} \to 0$ we get
\begin{equation}
  \mathcal{C}_\tau u_\tau \to \mathcal{C}u  \quad \textrm{in}\ \mathcal{W}^* \label{19}
\end{equation}
  By Lemma \ref{convergences}$(g)$ and the continuity of $\bar{\gamma}^*$, we infer that
\begin{eqnarray}
  \bar{\gamma}^* \eta_\tau & \to & \bar{\gamma}^*\eta \quad \textrm{weakly in}\ \mathcal{W}^*. \label{20}
\end{eqnarray}
  It remains to show that
\begin{equation}\label{KB_11}
  \mathcal{A}_\tau v_\tau \to\mathcal{A}v\ \textrm{weakly in }\ \mathcal{W}^*.
\end{equation}
  In order to prove (\ref{KB_11}), we proceed in two steps. First, we show that
\begin{equation}\label{KB_12}
  \mathcal{A}_\tau v_\tau -\mathcal{A}v_\tau\to 0\ \textrm{weakly in}\ \mathcal{W}^*.
\end{equation}
  To this end, let $w\in \mathcal{W}$. We define the  function
\[
  h_\tau(t)=\left\langle \left(\mathcal{A}_\tau
  v_\tau\right)(t)-\left(\mathcal{A}v_\tau\right)(t),w(t)\right\rangle_{W^*\times W}\quad\textrm{for}\ t\in (0,T)
\]
  and note that
  $\langle\mathcal{A}_\tau v_\tau -\mathcal{A}v_\tau,w\rangle_{\mathcal{W}^*\times \mathcal{W}}=\int_0^Th_\tau(t)\,dt$.
  Let $S\subset [0,T]$ be a set of measure zero, such that the function $w$ is well defined on
  the set $[0,T]\setminus S$. Let $t\in[0,T]\setminus S$ and $n\in\nat$ be such that $t\in [t_{n-\frac{1}{2}},t_{n+\frac{1}{2}}]$.
  We estimate
\begin{eqnarray*}
 |h_\tau(t)|
 & = & |\langle A(t_n,v_\tau(t))-A(t,v_\tau(t)),w(t)\rangle_{W^*\times W}|\\
 & \leqslant & \| A(t_n,v_\tau(t))-A(t,v_\tau(t))\|_{W^*} \|w(t)\|_W.
\end{eqnarray*}
By hypothesis $H(\tau)$, it is clear that $t_n\to t$. Thus, by hypothesis $H(A)(i)$, we have $\| A(t_n,v_\tau(t))-A(t,v_\tau(t))\|_{W^*} \to 0$, so $h_\tau
(t)\to 0$ for a.e. $t\in [0,T]$. Moreover, we have
\begin{eqnarray*}
 & & |h_\tau(t)|\leqslant \| A(t_n,v_\tau(t))-A(t,v_\tau(t))\|_{W^*} \|w(t)\|_W  \\
 & \leqslant & \big( \|A(t_n,v_\tau(t))\|_{W^*} + \|A(t,v_\tau(t))\|_{W^*}\big) \ \|w(t)\|_W\\
 & \leqslant & \big(2\beta_A + 2\beta_A\|v_\tau(t)\|_{W^*}^{p-1}\big) \|w(t)\|_W \\
 & = & 2\beta_A \|w(t)\|_W +
    2\beta_A \|v_\tau(t)\|_W^{p-1} \|w(t)\|_W.
\end{eqnarray*}
  By the H\"older inequality, the right hand side is integrable on $[0,T]$, so we can use Lebesgue dominated convergence theorem and we
  $\langle\mathcal{A}_\tau v_\tau - \mathcal{A}v_\tau,w\rangle_{\mathcal{W}^*\times\mathcal{W}}\to 0. $ Since the function $w$ is arbitrary, we obtain
 (\ref{KB_12}).

 In the second step, we calculate
\begin{eqnarray}
  \nonumber
 & & \limsup \langle \mathcal{A}v_\tau,v_\tau -v\rangle_{\mathcal{W}^*\times\mathcal{W}}\\
 \nonumber
 & \leqslant & \limsup \langle \mathcal{A}_\tau  v_\tau,v_\tau-v\rangle_{\mathcal{W}^*\times\mathcal{W}}
     + \limsup\langle\mathcal{A}v_\tau-\mathcal{A}_\tau v_\tau,v_\tau -v\rangle_{\mathcal{W}^*\times\mathcal{W}}\\
 \nonumber
 & \leqslant & \limsup\langle \mathcal{A}_\tau v_\tau,v_\tau -v\rangle_{\mathcal{W}^*\times\mathcal{W}}
       + \limsup \langle\mathcal{A} v_\tau-\mathcal{A}_\tau v_\tau,v_\tau\rangle_{\mathcal{W}^*\times\mathcal{W}}\\
  \label{222}
 & & + \limsup\langle \mathcal{A}_\tau v_\tau- \mathcal{A}v_\tau,v\rangle_{\mathcal{W}^*\times\mathcal{W}}.
\end{eqnarray}
  Using \eqref{KB_12}, we have
\begin{equation}
\limsup\langle \mathcal{A}_\tau v_\tau- \mathcal{A}v_\tau,v\rangle_{\mathcal{W}^*\times\mathcal{W}} =0. \label{23}
\end{equation}
Analogously as in the proof of \eqref{23}, we show that
\begin{equation}
\limsup \langle\mathcal{A} v_\tau-\mathcal{A}_\tau v_\tau,v_\tau\rangle_{\mathcal{W}^*\times\mathcal{W}} =0. \label{24}
\end{equation}
From \eqref{Nem1} we get
\begin{eqnarray}
  \nonumber
 & & \hspace{-1.0cm}\limsup\langle \mathcal{A}_\tau v_\tau,v_\tau -v\rangle_{\mathcal{W}^*\times\mathcal{W}} = \langle f_\tau,v_\tau -v \rangle_{\mathcal{W}^*\times\mathcal{W}} +
    (\hat{v}_\tau ',v-v_\tau)_\mathcal{H}\\
 & & \hspace{-1.0cm}+ \langle\mathcal{B}_0 u_\tau,v-v_\tau\rangle_{\mathcal{V}^*\times\mathcal{V}} - \langle\mathcal{C}_\tau
  u_\tau,v_\tau-v\rangle_{\mathcal{W}^*\times\mathcal{W}}-\langle\eta_\tau,\bar{\gamma}v_\tau-\bar{\gamma} v\rangle_{\mathcal{U}^*\times\mathcal{U}}. \label{25}
\end{eqnarray}
  From \eqref{21} and Lemma~\ref{convergences}(d), we have
\begin{equation}
  \langle f_\tau,v_\tau -v\rangle_{\mathcal{W}^*\times\mathcal{W}} \to 0.
\end{equation}
  Moreover, we have
\begin{eqnarray*}
 & & \limsup ( \hat{v}_\tau ',v-v_\tau  )_\mathcal{H} = \limsup\left( ( \hat{v}_\tau ',v )_\mathcal{H}
     - (\hat{v}_\tau ',\hat{v}_\tau  )_\mathcal{H} +  (\hat{v}_\tau',\hat{v}_\tau -v_\tau )_\mathcal{H} \right) \\
 & \leqslant & \lim (\hat{v}_\tau ',v )_\mathcal{H} -\liminf  (\hat{v}_\tau  ',\hat{v}_\tau )_\mathcal{H} + \limsup (
    \hat{v}_\tau  ',\hat{v}_\tau -v_\tau  )_\mathcal{H} =  ( v',v )_\mathcal{H} \\
 & &
  -\liminf\bigg(\frac{1}{2}|\hat{v}_\tau (T)|^2-\frac{1}{2}|\hat{v}_\tau(0)|^2\bigg) +
  \limsup\bigg(-\frac{1}{2}\sum_{j=1}^{N-1}\big(t_{j+\frac{1}{2}}-t_{j-\frac{1}{2}}\big)^2\bigg)\\
 & \leqslant & \frac{1}{2}\big(|v(T)|^2-|v(0)|^2 + \limsup |\hat{v}_\tau(0)|^2 -\liminf |\hat{v}_\tau(T)|^2\big) \\
  & \leqslant & \frac{1}{2}\big( |v(T)|^2-\liminf|\hat{v}_\tau(T)|^2 +\lim |v^0|^2 -|v(0)|^2\big).
\end{eqnarray*}
  From Lemma~\ref{convergences}(c) and (d) and from continuity of the embedding
\[
  \{v\in L^p(0,T;W)\mid v'\in L^q(0,T;W^*)\}\subset C(0,T;H),
\]
  we have $v_\tau \to v$ that $v_\tau(t)\to v(t)$ weakly in $H$.
  From hypothesis $H_1(2)$ and the uniqueness of the weak limit, we have
\begin{equation}\label{KB_13}
  v(0)=v_0\ \textrm{and}\ v^0 \to v_0\ \textrm{in}\ H.
\end{equation}
  The continuity of the norm implies $|v_\tau(0)|\to |v(0)|$.
  Moreover, $v_\tau(T)\to v(T)$ weakly in $H$ and by the weak lower semicontinuity of norm, we have
  $|v(T)|\leqslant \liminf|v_\tau(T)|$. Summarizing, we conclude that
\begin{equation}
  \limsup (\hat{v}_\tau ',v-v_\tau )_\mathcal{H} \leqslant 0. \label{26}
\end{equation}
  Since the operator $B_0\colon V\to V^*$ defines the inner product on $V$ and since $(Kw)'=w$ for all $w\in L^2(0,T;V)$,
  integrating by parts, we get
\begin{eqnarray}\label{27}
  & & \left\langle\mathcal{B}_0 Kw,w\right\rangle_{\mathcal{V}^*\times\mathcal{V}}\cr
  & = & \int_0^T\left\langle \mathcal{B}_0(Kw)(t),w(t)\right\rangle_{V^*\times V} \,dt \cr
  & = & \int_0^T \left\langle \mathcal{B}_0(Kw)(t),(Kw)'(t)\right\rangle_{V^*\times V} \,dt\cr
  & = & \frac{1}{2}\langle \mathcal{B}_0(Kw)(T),(Kw)(T)\rangle_{V^*\times V} - \frac{1}{2}\langle \mathcal{B}_0(Kw)(0),(Kw)(0)\rangle_{W^*\times W}\cr
  & = & \frac{1}{2}\|Kw(T)\|_B^2 \geqslant 0.
\end{eqnarray}
  By (\ref{KB_9}) and (\ref{27}) we have
\begin{eqnarray}
  \nonumber
 & & \dual{B_0u_\tau,v-v_\tau}{\mathcal{V}}=\dual{B_0u,v-v_\tau}{\mathcal{V}}\\
 \nonumber
 & & +\dual{B_0(u_\tau-u_0-Kv_\tau),v-v_\tau}{\mathcal{V}}-\dual{B_0K(v-v_\tau),v-v_\tau}{\mathcal{V}}\\
 \label{KB_15}
 & & \leqslant\dual{B_0u,v-v_\tau}{\mathcal{V}}+\dual{B_0(u_\tau-u_0-Kv_\tau),v-v_\tau}{\mathcal{V}}.
\end{eqnarray}
  From Lemma \ref{convergences}$(b)$, it follows that $v_\tau\to v$ weakly in $\mathcal{V}$. Thus
\[
  \dual{B_0u,v-v_\tau}{\mathcal{V}}\to 0.
\]
  Moreover, by Lemma
  \ref{convergences}$(f)$, we also have
\[
  \dual{B_0(u_\tau-u_0-Kv_\tau),v-v_\tau}{\mathcal{V}}\to 0.
\]
  Thus, it follows from (\ref{KB_15}), that
\begin{equation}\label{28}
  \limsup\langle \mathcal{B}_0 u_\tau,v-v_\tau\rangle_{\mathcal{W}^*\times\mathcal{W}} \leqslant 0.
\end{equation}
From \eqref{19} and Lemma~\ref{convergences}$(b)$, we obtain
\begin{equation}\label {29}
  \lim\langle \mathcal{C}_\tau u_\tau,v_\tau-v\rangle_{\mathcal{W}^*\times\mathcal{W}} = 0.
\end{equation}
  From Lemma \ref{convergences} $(g)$, $(l)$ and hypothesis $H(\gamma)$, passing to a subsequence if necessary, we have
\begin{equation}\label{30}
  \langle\eta_\tau,\bar{\gamma}v_\tau -\bar{\gamma} v\rangle_{\mathcal{U}^*\times \mathcal{U}} \to 0.
\end{equation}
  Applying \eqref{26}-\eqref{30} in \eqref{25} we have
\begin{equation}\label{31}
  \limsup\langle \mathcal{A}_\tau v_\tau,v_\tau-v\rangle_{\mathcal{W}^*\times\mathcal{W}} \leqslant 0.
\end{equation}
  From \eqref{23}, \eqref{24}, \eqref{31} in \eqref{222}, we get
\begin{equation}\label{32}
  \limsup\langle\mathcal{A}v_\tau,v_\tau -v\rangle_{\mathcal{W}^*\times\mathcal{W}} \leqslant 0.
\end{equation}
  Now, from Lemma~\ref{convergences}$(b)$, $(l)$, \eqref{32} and Lemma~\ref{kalita} we obtain
\begin{equation}\label{33}
  \mathcal{A}v_\tau \to \mathcal{A}v \quad \textrm{weakly in}\ \mathcal{W}^*.
\end{equation}
  By (\ref{33}) and (\ref{KB_12}) we obtain (\ref{KB_11}).
  Using \eqref{16}-\eqref{KB_11} we pass to the limit in \eqref{Nem1} and obtain
\begin{equation}\label{KB_14}
  v' + \mathcal{A}v + \mathcal{B}_0 u + \mathcal{C}u + \bar{\gamma}^*\eta = f.
\end{equation}
  Next, we pass to the limit with inclusion \eqref{Nem2}. From Lemma~\ref{convergences}$(l)$
  and hypothesis $H(\bar{\gamma})$, we have that $\bar{\gamma} v_\tau \to\bar{\gamma} v$ in $\mathcal{U}$
  and in consequence
\begin{equation}
  \bar{\gamma}v_\tau (t) \to \bar{\gamma} v(t) \quad \textrm{in}\ U, \quad \textrm{for a.e.}\ t\in[0,T]. \label{35}
\end{equation}
  From Lemma \ref{convergences} $(g)$, \eqref{35} and Aubin-Celina convergence theorem (cf. \cite{bAubinCelina1984a}), we get
\begin{equation}\label{KB_16}
  \eta(t)\in M(\bar{\gamma} v(t)) \quad \textrm{for a.e.} \quad t\in[0,T].
\end{equation}
  Moreover, by Lemma~\ref{convergences} we have $u=u_0+Kv$, thus $u(0)=u_0$ and $u'=v$. Combining it with \eqref{KB_13}, we see that $u'(0)=v_0$. This, together
  with \eqref{KB_14} and \eqref{KB_16} shows that $u$ is a solution of Problem {$\mathcal{P}$}. This completes the proof.
\end{proof}

\section{Examples}\label{Examples}
In this section we consider two problems, for which the existence result obtained in Theorem \ref{theorem_1} is applicable.

 Let $\Omega$ be an open bounded
 subset of $\mathbb{R}^N$ with a Lipschitz boundary $\partial\Omega$.
 The boundary is divided in two parts $\Gamma_1$, $\Gamma_2$ such that
$\bar{\Gamma_1}\cup\bar{\Gamma_2}={\partial\Omega}$ and the $N-1$ dimensional measure of $\Gamma_1$ is positive. We denote by $\nu$ the  outward unit vector
normal to $\partial\Omega$. Let $p\geq 2$, $T>0$ and let the functions $g\colon \real\to\real$, $j_1\colon \Gamma_2\to\real$, $j_2\colon \Omega\to\real$,
$f_1\colon [0,T]\times\Omega\to\real$ and $f_2\colon [0,T]\times\Omega\to\real$ be given. We formulate two problems.\\

\noindent {\bf Problem $P_1$}. Find $u\colon [0,T]\times\Omega\to\real$ such that
    \begin{eqnarray*}
      \left\{
      \begin{array}{l}
        u''-\alpha_1 {\rm div}\left(|\nabla u'|^{p-2}\nabla u'\right) + g(u')-\Delta u+ |u|^\delta u=f_1 \quad \textrm{in}\ \Omega\times (0,T),\\
        u=0 \quad \textrm{on}\ \Gamma_1,\\
        \frac{\partial u}{\partial \nu} + \nu\cdot\left(|\nabla u'|^{p-2} \nabla u'\right)= \eta \in \partial j_1(\gamma u') \quad \textrm{on}\ \Gamma_2\times (0,T),\\
        u(0)=u_0, \qquad u'(0)=v_0.
      \end{array}
      \right.
    \end{eqnarray*}

\noindent {\bf Problem $P_2$}. Find $u\colon [0,T]\times\Omega\to\real$ such that
    \begin{eqnarray*}
      \left\{
      \begin{array}{l}
        u''-\alpha_2 {\rm div}\left(|\nabla u'|^{p-2}\nabla u'\right) + g(u')-\Delta u+ |u|^\delta u+\gamma^*\eta=f_2 \quad \textrm{in}\ \Omega\times (0,T),\\
        \eta\in\partial j_2(u')\quad \textrm{in}\ \Omega\times (0,T),\\
        u=0 \quad \textrm{on}\ \partial\Omega\times (0,T),\\
        u(0)=u_0, \qquad u'(0)=v_0.
      \end{array}
      \right.
    \end{eqnarray*}

\noindent In the above problems $\partial j_i$ denotes the Clarke subdifferential of the function $j_i$, $i=1,2$, $\alpha_i >0$ are constants and
$\delta\leqslant 1-\frac{2}{p}$.

We impose the following assumptions on the functions $g$, $j_1$ and $j_2$.\\

\noindent ${\underline{H(g)}}$ $g\colon \real\to\real$ is such that
    \begin{itemize}
        \item[(i)] $g$ is continuous,
        \item[(ii)] $\inf\limits_{s\in\real}g(s)s>-\infty$,
        \item[(iii)] $|g(s)|\leqslant c_g(1+|s|^{p-1})$ for all $s\in\real$ with $c_g>0$.
    \end{itemize}

\medskip

\noindent $\underline{H(j_1)}$ $j_1\colon \Gamma_2\times\real\to\real$ is such that
\begin{itemize}
    \item[(i)] $j_1(\cdot,\xi)$ is measurable for all $\xi\in\real$ and $j_1(\cdot,0)\in L^1(\Gamma_2)$,
    \item[(ii)] $j_1(x,\cdot)$ is locally Lipschitz for a.e. $x\in\Gamma_2$,
    \item[(iii)] $|\eta|\leqslant c_{j_1}(1+|\xi|^{p-1})$ for all $\eta\in\partial j_1(x,\xi)$,  $x\in\Gamma_2$ with $c_{j1}>0.$
\end{itemize}

\noindent $\underline{H(j_2)}$ $j_2\colon \Omega\times\real\to\real$ is such that
\begin{itemize}
    \item[(i)] $j_2(\cdot,\xi)$ is measurable for all $\xi\in\real$ and $j_2(\cdot,0)\in L^1(\Omega)$,
    \item[(ii)] $j_2(x,\cdot)$ is locally Lipschitz for a.e. $x\in\Omega$,
    \item[(iii)] $|\eta|\leqslant c_{j_2}(1+|\xi|^{p-1})$ for all $\eta\in\partial j_2(x,\xi)$,  $x\in\Omega$ with $c_{j2}>0.$
\end{itemize}

\noindent We introduce the spaces $W_1=\{v\in W^{1,p}(\Omega),\,\,v=0\,\,\text{on}\,\,\Gamma_1\}$ and  $W_2=W_0^{1,p}(\Omega)$ equipped with the norm
\begin{align}
\|v\|_{W_1}=\|v\|_{W_2}=\left(\int_{\Omega}|\nabla v(x)|^p\,dx\right)^{\frac{1}{p}}.\nonumber
\end{align}

\noindent Moreover, we define spaces $V_1=\{v\in H^1(\Omega):\ v=0\,\,\text{on}\,\,\Gamma_1\}$,  $V_2=H_0^1(\Omega)$, $H=L^2(\Omega)$, equipped with the norm
\begin{align}
\|v\|_{V_1}=\|v\|_{V_2}=\left(\int_{\Omega}|\nabla v(x)|^2\,dx\right)^{\frac{1}{2}}.\nonumber
\end{align}

\noindent Finally, we take $U_1=L^p(\Gamma_1)$ and $U_2=L^p(\Omega)$. Next, we consider operators $A_1\colon W_1\to W_1^*$, $A_2\colon W_2\to W_2^*$,
$B_1\colon V_1\to W_1^*$ and $B_2\colon V_2\to W_2^*$  defined by
\begin{eqnarray*}
  \langle A_1u,v\rangle_{W_1^*\times W_1} & = & \alpha_1\int_{\Omega} |\nabla u|^{p-2} \nabla u \cdot\nabla v\,dx + \int_\Omega g(u)\cdot v\,dx\ \text{for all}\,\,\,u, v\in W_1,\\
  \langle A_2u,v\rangle_{W_2^*\times W_2} & = & \alpha_2\int_{\Omega} |\nabla u|^{p-2} \nabla u \cdot\nabla v\,dx + \int_\Omega g(u)\cdot v\,dx\ \text{for all}\,\,\,u, v\in W_2,\\
  \langle B_1u,v\rangle_{W^*_1\times W_1} & = & \int_\Omega \nabla u\cdot\nabla v dx + \int_\Omega|u|^\delta u\cdot v\,dx\,\,\,\text{for all}\,\,\,u\in V_1, v\in W_1,\\
  \langle B_2u,v\rangle_{W^*_2\times W_2} & = & \int_\Omega \nabla u\cdot\nabla v dx+ \int_\Omega|u|^\delta u\cdot v\,dx\,\,\,\text{for all}\,\,\,u\in V_2, v\in W_2.
\end{eqnarray*}

  We define the spaces $\mathcal{W}_1=L^p(0,T;W_1)$, $\mathcal{W}_2=L^p(0,T;W_2)$, $\mathcal{V}=L^p(0,T;V)$, ${\mathcal{H}}=L^2(0,T;H)$,
 $\mathcal{U}_1=L^p(0,T;U_1)$ and $\mathcal{U}_2=L^p(0,T;U_2)$.

  We need the following assumptions on the right hand side of Problems $P_1$ and $P_2$.\\

\noindent $\underline{H(f_1)}$: $f_1\in {\mathcal{W}}_1$.\\

\noindent $\underline{H(f_2)}$: $f_1\in {\mathcal{W}}_2$.\\

 We define the functionals $F_1\in W_1^* $ and $F_2\in W_2^*$ by
\[
  F_1(v)=\int_{\Omega}f_1\cdot v\,dx\ \text{for all}\ v\in W_1,\quad F_2(v)=\int_{\Omega}f_2\cdot v\,dx\ \text{for all}\ v\in W_2.
\]

 Now we introduce the notion of a weak solution of Problems ${P}_1$ and ${P}_2$.

\begin{Definition}\label{def_weak_1}
A function $u\in \mathcal{W}_1$ is said to be a weak solution of Problem~${P_1}$ if $u'\in\mathcal{W}_1$, $u''\in\mathcal{W}_1^*$ and satisfies
\[
  \left\{
  \begin{array}{l}
  \dual{u''(t)+A_1u'(t)+B_1u(t),v}{W_1}+\int_{\Gamma_2}\eta(x)v(x)\,d\Gamma=F_1(v)\\
  \hspace*{5cm}\text{for a.e.}\ t\in (0,T),\,\,\,\text{for all}\,\,\, v\in W_1,\\
  \eta(x)\in \partial j_1(u'(x))\,\,\,\text{for a.e.}\ x\in\Gamma_2,\\
   u(0)=u_0, \qquad u'(0)=v_0.
   \end{array}
   \right.
\]
\end{Definition}

\begin{Definition}\label{def_weak_2}
A function $u\in \mathcal{W}_2$ is said to be a weak solution of Problem~${P_2}$ if $u'\in\mathcal{W}_2$, $u''\in\mathcal{W}_2^*$ and satisfies
\[
  \left\{
  \begin{array}{l}
    \dual{u''(t)+A_2u'(t)+B_2u(t),v}{W_2}+\int_{\Omega}\eta(x)v(x)\,dx=F_2(v)\\
    \qquad\qquad\qquad\qquad\qquad\qquad\quad\text{for a.e.}\ t\in (0,T),\,\,\,\text{for all}\,\,\, v\in W_2,\\
    \eta(x)\in \partial j_2(u'(x))\,\,\,\text{for a.e.}\ x\in\Omega,\\
    u(0)=u_0, \qquad u'(0)=v_0.
   \end{array}
   \right.
\]
\end{Definition}

  We remark that the weak formulations in Definitions \ref{def_weak_1} and \ref{def_weak_2}
  are obtained from equations in Problems $P_1$ and $P_2$,
  respectively, by multiplying them by a test function $v\in W_1$ ($v\in W_2$, respectively)  and using the Green formula.

 In what follows we will deal with the existence of weak solutions of Problems~$P_1$ and $P_2$. First we define two auxiliary functionals
  $J_1\colon U_1\to\real$ and $J_2\colon U_2\to\real$ given by
\[
  J_1(v)=\int_{\Gamma_2}j_1(x,v(x))\, d\Gamma\ \textrm{for all}\ v\in U_1,
\]
\[
  J_2(v)=\int_{\Omega}j_2(x,v(x))\,dx\ \textrm{for all}\ v\in U_2.
\]

  Next, we define the multifunctions $M_1\colon U_1\to2^{U_1^*}$
  and $M_2\colon  U_2\to2^{U_2^*}$ given by $M_i(v)=\partial J_i(v)$
  for all $v\in U_i$, $i=1,2$. Finally, let
  $\gamma_1\colon W_1\to U_1$ denote the trace operator
  and $\gamma_2\colon W_2\to U_2$ the embedding operator. Now, we formulate two auxiliary problems.\\

\noindent {\bf Problem {${\mathcal{P}}_1$}}. Find $u\in \mathcal{W}_1$ with $u'\in\mathcal{W}_1$ and $u''\in\mathcal{W}_1^*$ such that
\begin{eqnarray}
 && \!\!\!\!\!\!\!\!\!\!  u''(t) + A_1(u'(t)) + B_1(u(t)) + \gamma_1^* M_1(\gamma_1 u'(t)) \ni f_1(t)\ \mbox{a.e.}\ {t\in [0,T]},\nonumber\\
 && \!\!\!\!\!\!\!\!\!\!  u(0)=u_0, \quad
u'(0)=v_0.\nonumber
\end{eqnarray}

\noindent {\bf Problem {${\mathcal{P}}_2$}} Find $u\in \mathcal{W}_2$ with $u'\in\mathcal{W}_2$ and $u''\in\mathcal{W}_2^*$ such that
\begin{eqnarray}
 && \!\!\!\!\!\!\!\!\!\!  u''(t) + A_2(u'(t)) + B_2(u(t)) + \gamma_2^* M_2(\gamma_2 u'(t)) \ni f_2(t)\ \mbox{a.e.}\ {t\in [0,T]},\nonumber\\
 && \!\!\!\!\!\!\!\!\!\!  u(0)=u_0, \quad
u'(0)=v_0.\nonumber
\end{eqnarray}

\begin{Remark}\label{Remark_1}
  By the properties of Clarke subdifferential of functionals $J_i$,
  it follows that each solution of Problem {${\mathcal{P}}_i$} is also
  a solution of Problem $P_i$, $i=1,2$.
\end{Remark}

\noindent We recall that the following Poincare inequalities hold
\begin{align}
&\int_{\Omega}|v(x)|^p dx\leq \tilde{c}_1\int_{\Omega}|\nabla v(x)|^p dx\,\,\,\text{for all}\ v\in W_1,\label{KB_19}\\
&\int_{\Omega}|v(x)|^p dx\leq \tilde{c}_2\int_{\Omega}|\nabla v(x)|^p dx\,\,\,\text{for all}\ v\in W_2\label{KB_20}
\end{align}
with $\tilde{c}_1, \tilde{c}_2>0$.
Let us define the following constants
\begin{align*}
& c_{A_1}=\max\big\{c_q\tilde{c}_1^{\frac{1}{p}}|\Omega|^{\frac{1}{q}},\alpha_1+c_g\tilde{c}_1^{\frac{1}{pq}}\big\},\\
& c_{A_2}=\max\big\{c_q\tilde{c}_2^{\frac{1}{p}}|\Omega|^{\frac{1}{q}},\alpha_1+c_g\tilde{c}_2^{\frac{1}{pq}}\big\},\\
& c_{M_1}=c_{j_1}2^{\frac{1}{p}}\max\big\{1,|\Gamma_2|^{\frac{1}{q}}\big\},\\
& c_{M_2}=c_{j_2}2^{\frac{1}{p}}\max\big\{1,|\Omega|^{\frac{1}{q}}\big\},
\end{align*}
where $|\Gamma_2|$ and $|\Omega|$ denote the surface measure of $\Gamma_2$ and the Lebesgue measure of $\Omega$, respectively. Now we formulate lemmata
containing the properties of the operators $A_1$, $A_2$, $M_1$ and $M_2$.

\begin{Lemma}\label{lemma_A1}
  If assumption $H(g)$ holds, then operator $A_1$ satisfies
\begin{itemize}
  \item[(i)] $\|A_1u\|_{W_1^*}\leq c_{A_1}(1+\|u\|_{W_1}^{p-1})$ \,\,\,for all $u\in W_1$,
  \item[(ii)] $\dual{A_1u,u}{W_1}\geq\alpha_1\|u\|_{W_1}^p+\inf_{s\in\real}g(s) s|\Omega|$ for all $u\in W_1$,
  \item[(iii)] $A_1$ is pseudomonotone.
\end{itemize}
\end{Lemma}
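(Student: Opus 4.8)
The plan is to write $A_1 = A_1^{\mathrm{p}} + N_g$, where $A_1^{\mathrm{p}}\colon W_1\to W_1^*$ is the scaled $p$-Laplacian, given by $\langle A_1^{\mathrm{p}}u,v\rangle_{W_1^*\times W_1}=\alpha_1\int_\Omega|\nabla u|^{p-2}\nabla u\cdot\nabla v\,dx$, and $N_g\colon W_1\to W_1^*$ is the Nemytskii operator of $g$, given by $\langle N_g u,v\rangle_{W_1^*\times W_1}=\int_\Omega g(u)v\,dx$, and then to verify each of (i)--(iii) for the two summands separately and combine.

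For (i), I would bound the $p$-Laplacian part by the H\"older inequality, obtaining $|\langle A_1^{\mathrm{p}}u,v\rangle_{W_1^*\times W_1}|\le\alpha_1\|\nabla u\|_{L^p(\Omega)}^{p-1}\|\nabla v\|_{L^p(\Omega)}=\alpha_1\|u\|_{W_1}^{p-1}\|v\|_{W_1}$, and the $g$-part by $|\langle N_g u,v\rangle_{W_1^*\times W_1}|\le\|g(u)\|_{L^q(\Omega)}\|v\|_{L^p(\Omega)}$; then I would estimate $\|g(u)\|_{L^q(\Omega)}$ from the growth bound $H(g)(iii)$, using an elementary inequality of the form $(1+s)^q\le c_q(1+s^q)$, the identity $(p-1)q=p$, and finally the Poincar\'e inequality \eqref{KB_19} to pass from $\|u\|_{L^p(\Omega)}$ to $\|u\|_{W_1}$; collecting the constants yields the claimed bound with constant $c_{A_1}$. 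For (ii), taking $v=u$ in the definition gives $\langle A_1 u,u\rangle_{W_1^*\times W_1}=\alpha_1\|u\|_{W_1}^p+\int_\Omega g(u(x))u(x)\,dx$, and since by $H(g)(ii)$ the number $\inf_{s\in\real}g(s)s$ is finite we have $g(u(x))u(x)\ge\inf_{s\in\real}g(s)s$ pointwise, hence $\int_\Omega g(u(x))u(x)\,dx\ge|\Omega|\inf_{s\in\real}g(s)s$, which is exactly (ii).

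For (iii), I would first recall that $A_1^{\mathrm{p}}$ is a classical example of a pseudomonotone operator: it is bounded (by the estimate in (i)), demicontinuous (by continuity of $\xi\mapsto|\xi|^{p-2}\xi$ and the dominated convergence theorem) and monotone (by the elementary inequality $(|\xi|^{p-2}\xi-|\zeta|^{p-2}\zeta)\cdot(\xi-\zeta)\ge 0$ on $\real^N$). For $N_g$, the key point is that the embedding $W_1\subset L^p(\Omega)$ is compact; thus if $u_n\to u$ weakly in $W_1$ then $u_n\to u$ strongly in $L^p(\Omega)$, and, passing to a subsequence converging a.e. and invoking $H(g)(i)$, $H(g)(iii)$ together with the dominated convergence theorem, $g(u_n)\to g(u)$ in $L^q(\Omega)$, i.e. $N_g u_n\to N_g u$ in $W_1^*$. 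Hence $N_g$ is completely continuous, and in particular pseudomonotone. Finally, Proposition~\ref{prop:sum_pseudo} gives that $A_1=A_1^{\mathrm{p}}+N_g$ is pseudomonotone. The only genuinely non-routine point is the complete continuity of $N_g$, which is precisely where the compactness of $W_1\subset L^p(\Omega)$ and the subcritical growth exponent $p-1$ in $H(g)(iii)$ are used; everything else is bookkeeping.
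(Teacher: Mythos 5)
Your proposal is correct and follows essentially the same route as the paper, whose proof simply notes that (i) follows from $H(g)(iii)$ and the Poincar\'e inequality \eqref{KB_19}, that (ii) is immediate from the definition and $H(g)(ii)$, and that (iii) is the standard pseudomonotonicity argument from Chapter 2 of \cite{Roubicek2005} (monotone, bounded, demicontinuous principal part plus a completely continuous perturbation). You have merely written out in full the details the paper delegates to that reference.
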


\begin{proof}
 Condition $(i)$ follows from $H(g)(iii)$ and \eqref{KB_19}. Condition $(ii)$
 follows directly from the definition of $A_1$ and $H(g)(ii)$.
 Finally, for the pseudomonotonicity of $A_1$, we refer to Chapter 2 of
 \cite{Roubicek2005}.
\end{proof}

\begin{Lemma}\label{lemma_A2}
  If assumption $H(g)$ holds, then operator $A_2$ satisfies
\begin{itemize}
\item[(i)] $\|A_2u\|_{W_2^*}\leq c_{A_2}(1+\|u\|_{W_2}^{p-1})$ \,\,\,for all $u\in W_2$,
\item[(ii)] $\dual{A_2u,u}{W_2}\geq\alpha_2\|u\|_{W_2}^p+\inf_{s\in\real}g(s) s|\Omega|$\,\,\, for all $u\in W_2$,
\item[(iii)] $A_2$ is pseudomonotone.
\end{itemize}
\end{Lemma}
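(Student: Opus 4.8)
The argument is the exact analogue of the proof of Lemma~\ref{lemma_A1}, with $W_1$, $\alpha_1$, $\tilde c_1$ and \eqref{KB_19} replaced throughout by $W_2$, $\alpha_2$, $\tilde c_2$ and \eqref{KB_20}. The plan is to split $A_2u$ into the quasilinear principal part $u\mapsto -\alpha_2\operatorname{div}(|\nabla u|^{p-2}\nabla u)$ and the Nemytskii term $u\mapsto g(u)$, and to verify the three conditions separately.

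For (i), testing the principal part against $v\in W_2$ and applying the H\"older inequality gives a bound by $\alpha_2\|u\|_{W_2}^{p-1}\|v\|_{W_2}$, since $|\nabla u|^{p-2}\nabla u\in L^q(\Omega;\real^N)$ with $L^q$-norm equal to $\|\nabla u\|_{L^p(\Omega)}^{p-1}$. For the lower-order term, the growth condition $H(g)(iii)$ together with $(p-1)q=p$ yields $\|g(u)\|_{L^q(\Omega)}\leq c_q(|\Omega|^{1/q}+\|u\|_{L^p(\Omega)}^{p-1})$, and then the Poincar\'e inequality \eqref{KB_20} bounds $\|u\|_{L^p(\Omega)}$ by $\tilde c_2^{1/p}\|u\|_{W_2}$; pairing with $v$ and using the embedding $W_2\subset L^p(\Omega)$ produces the remaining factors of $|\Omega|^{1/q}$ and $\tilde c_2^{1/(pq)}$. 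Adding the two contributions and estimating the sum by $c_{A_2}(1+\|u\|_{W_2}^{p-1})$, with $c_{A_2}$ the constant defined above, gives (i). For (ii), one writes $\dual{A_2u,u}{W_2}=\alpha_2\int_\Omega|\nabla u|^p\,dx+\int_\Omega g(u)u\,dx$; the first integral equals $\alpha_2\|u\|_{W_2}^p$ by the definition of the norm on $W_2$, and for the second the pointwise inequality $g(u(x))u(x)\geq\inf_{s\in\real}g(s)s$, valid and finite by $H(g)(ii)$, yields after integration the term $(\inf_{s\in\real}g(s)s)|\Omega|$, which is exactly the claimed lower bound.

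For (iii), the principal part is the standard $p$-Laplacian on $W_0^{1,p}(\Omega)$, which is bounded, monotone and hemicontinuous, hence pseudomonotone, while the perturbation $u\mapsto g(u)$ is, by the continuity and growth of $g$ and the compactness of the embedding $W_0^{1,p}(\Omega)\subset L^p(\Omega)$, a completely continuous map from $W_2$ to $W_2^*$, and therefore pseudomonotone as well; by Proposition~\ref{prop:sum_pseudo} their sum $A_2$ is pseudomonotone. As in Lemma~\ref{lemma_A1}, the verification of these standard facts can be found in Chapter~2 of \cite{Roubicek2005}. The only mildly delicate point is the bookkeeping of constants in part (i): one must check that the embedding and Poincar\'e constants combine precisely into the stated $c_{A_2}$. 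The functional-analytic content of all three parts is entirely routine, so I do not expect any genuine obstacle here.
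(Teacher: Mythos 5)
Your proof is correct and follows exactly the route the paper intends: the paper simply declares Lemma \ref{lemma_A2} analogous to Lemma \ref{lemma_A1}, whose proof obtains (i) from $H(g)(iii)$ and the Poincar\'e inequality, (ii) directly from the definition of the operator and $H(g)(ii)$, and (iii) from the standard pseudomonotonicity of the $p$-Laplacian plus a completely continuous perturbation, citing Chapter~2 of \cite{Roubicek2005}. Your write-up supplies more detail than the paper but no different ideas; note only that the paper's displayed constant $c_{A_2}$ contains $\alpha_1$ where $\alpha_2$ is clearly meant, consistent with your bookkeeping.
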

\noindent The proof of Lemma \ref{lemma_A2} is analogous to the proof of Lemma \ref{lemma_A1}.

\begin{Lemma}\label{lemma_M1}
  If assumption $H(j_1)$ holds, then operator $M_1$ satisfies
\begin{itemize}
 \item[(i)] for all $u\in U_1$, $M_1(u)$ is a nonempty, closed and convex set,
 \item[(ii)] $M_1$ is upper semicontinuous in $(s\textrm{-}U_1\times w\textrm{-}U_1^*)$-topology,
 \item[(iii)] $\|\eta\|_{U_1^*} \leqslant c_{M_1}(1+\|w\|_{U_1}^{p-1})$ for all $w\in U_1$, all $\eta \in M_1(w)$.
\end{itemize}
\end{Lemma}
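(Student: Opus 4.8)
The plan is to recognize $M_1=\partial J_1$ as the Clarke subdifferential of a locally Lipschitz functional on $U_1=L^p(\Gamma_2)$ and then to deduce (i)--(iii) from the standard calculus of the Clarke subdifferential together with the Aubin--Clarke representation theorem. The first step is to verify that $J_1$ is well defined and locally Lipschitz on $U_1$. By $H(j_1)(i)$--(ii), $j_1$ is a Carath\'eodory function, so $x\mapsto j_1(x,v(x))$ is measurable for every $v\in U_1$; applying Lebourg's mean value theorem to $j_1(x,\cdot)$ together with the growth bound $H(j_1)(iii)$ gives, for a.e. $x$ and all $\xi\in\real$,
\[
  |j_1(x,\xi)|\le|j_1(x,0)|+c_{j_1}\big(1+|\xi|^{p-1}\big)|\xi|,
\]
whence $J_1(v)\in\real$ for every $v\in U_1$ since $j_1(\cdot,0)\in L^1(\Gamma_2)$ and $v\in L^p(\Gamma_2)$. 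A second application of Lebourg's theorem on a ball $\{\|v\|_{U_1}\le R\}$ combined with the H\"older inequality produces a constant $L_R>0$ with $|J_1(v_1)-J_1(v_2)|\le L_R\|v_1-v_2\|_{U_1}$ on that ball, so $J_1$ is locally Lipschitz on $U_1$.

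With this in hand, (i) and (ii) are immediate from the general properties of the Clarke subdifferential of a locally Lipschitz function on a Banach space: $\partial J_1(u)$ is nonempty, convex and weak$^*$ compact for every $u$, and $u\mapsto\partial J_1(u)$ is upper semicontinuous from the norm topology of $U_1$ into the weak$^*$ topology of $U_1^*$. Since $1<q<\infty$, the space $U_1^*=L^q(\Gamma_2)$ is reflexive, so its weak$^*$ and weak topologies coincide and weak$^*$ compact sets are (weakly) closed; this yields (i) and (ii) in the stated form.

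For (iii) I would invoke the Aubin--Clarke representation theorem, which under $H(j_1)$ gives $\partial J_1(w)\subseteq\{\eta\in L^q(\Gamma_2):\eta(x)\in\partial j_1(x,w(x))\ \text{for a.e.}\ x\in\Gamma_2\}$. Hence for $\eta\in M_1(w)$ we have $|\eta(x)|\le c_{j_1}(1+|w(x)|^{p-1})$ a.e., and since $(p-1)q=p$ and $(a+b)^q\le2^{q-1}(a^q+b^q)$,
\[
  \|\eta\|_{U_1^*}^q=\int_{\Gamma_2}|\eta(x)|^q\,d\Gamma\le c_{j_1}^q\,2^{q-1}\big(|\Gamma_2|+\|w\|_{U_1}^p\big).
\]
Taking the $q$-th root and using $(a+b)^{1/q}\le a^{1/q}+b^{1/q}$, $2^{(q-1)/q}=2^{1/p}$ and $p/q=p-1$ gives $\|\eta\|_{U_1^*}\le c_{j_1}2^{1/p}\big(|\Gamma_2|^{1/q}+\|w\|_{U_1}^{p-1}\big)\le c_{M_1}(1+\|w\|_{U_1}^{p-1})$ with $c_{M_1}=c_{j_1}2^{1/p}\max\{1,|\Gamma_2|^{1/q}\}$, which is exactly (iii).

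The computations with Lebourg's theorem, the H\"older inequality and the bookkeeping of exponents are routine; the one point that requires care is checking that the hypotheses of the Aubin--Clarke theorem are met, so that the pointwise description of $\partial J_1$ is available --- this is precisely what the measurability and subgradient growth assumptions in $H(j_1)$ provide. Alternatively, the whole statement (including local Lipschitzness, the three structural properties, and the pointwise growth) can be quoted from the corresponding theorem on Clarke subdifferentials of integral functionals in \cite{HMSBOOK}, leaving only the verification of the constant $c_{M_1}$ as above.
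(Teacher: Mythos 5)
Your proof is correct. The paper itself gives no proof of this lemma (it is stated without argument, implicitly relying on the standard theory of Clarke subdifferentials of integral functionals as in \cite{HMSBOOK}), and your argument --- local Lipschitzness of $J_1$ via Lebourg's theorem and the growth condition, properties (i)--(ii) from the general theory of $\partial J_1$ together with reflexivity of $L^q(\Gamma_2)$, and (iii) from the Aubin--Clarke pointwise representation with the exponent bookkeeping $(p-1)q=p$ and $2^{(q-1)/q}=2^{1/p}$ --- is exactly the standard route and reproduces the paper's constant $c_{M_1}=c_{j_1}2^{1/p}\max\{1,|\Gamma_2|^{1/q}\}$.
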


\begin{Lemma}\label{lemma_M2}
  If assumption $H(j_2)$ holds, then operator $M_2$ satisfies
\begin{itemize}
 \item[(i)] for all $u\in U_2$, $M_2(u)$ is a nonempty, closed and convex set,
 \item[(ii)] $M_2$ is upper semicontinuous in $(s\textrm{-}U_2\times w\textrm{-}U_2^*)$-topology,
 \item[(iii)] $\|\eta\|_{U_2^*} \leqslant c_{M_2}(1+\|w\|_{U_2}^{p-1})$ for all $w\in U_2$, all $\eta \in M_2(w)$.
\end{itemize}
\end{Lemma}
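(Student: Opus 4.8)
The statement is the standard description of the Clarke generalized gradient of an integral functional, and the plan is to prove it along the lines of the Aubin--Clarke theorem (cf. the argument indicated for Lemma~\ref{lemma_1}). First I would check that $J_2$ is well defined and locally Lipschitz on $U_2=L^p(\Omega)$. Applying Lebourg's mean value theorem to $s\mapsto j_2(x,s)$ and using the growth bound $H(j_2)(iii)$, one gets $|j_2(x,\xi)-j_2(x,0)|\leqslant c_{j_2}(|\xi|+|\xi|^p)$ for a.e. $x\in\Omega$ and all $\xi\in\real$; since $j_2(\cdot,0)\in L^1(\Omega)$, the functional $J_2$ is finite on $L^p(\Omega)$, and the analogous estimate for $j_2(x,\xi_1)-j_2(x,\xi_2)$ yields, via H\"older's inequality, Lipschitz continuity of $J_2$ on bounded subsets of $L^p(\Omega)$. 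Consequently, by the general properties of Clarke's subdifferential, $\partial J_2(v)$ is nonempty, convex and weakly$^*$ compact in $U_2^*$ for every $v$; as $p\geqslant 2$, $L^p(\Omega)$ is reflexive, so the weak and weak$^*$ topologies on $U_2^*$ coincide and $\partial J_2(v)$ is weakly compact, in particular strongly closed. This proves~(i).

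Next I would invoke the Aubin--Clarke theorem for integral functionals, whose hypotheses are exactly $H(j_2)(i)$--$(iii)$, to obtain the inclusion $\partial J_2(v)\subseteq\{\eta\in L^q(\Omega):\ \eta(x)\in\partial j_2(x,v(x))\ \text{for a.e. }x\in\Omega\}$ for all $v\in U_2$. Property~(iii) then follows by a direct computation: for $w\in U_2$ and $\eta\in M_2(w)$ one has $|\eta(x)|\leqslant c_{j_2}(1+|w(x)|^{p-1})$ a.e., so, raising to the power $q$, integrating over $\Omega$, and using $(a+b)^q\leqslant 2^{q-1}(a^q+b^q)$ together with $q(p-1)=p$,
\[
  \|\eta\|_{U_2^*}^q\leqslant 2^{q-1}c_{j_2}^q\big(|\Omega|+\|w\|_{U_2}^p\big);
\]
taking the $q$-th root and using $(q-1)/q=1/p$ and $p/q=p-1$ yields $\|\eta\|_{U_2^*}\leqslant c_{j_2}2^{1/p}\big(|\Omega|^{1/q}+\|w\|_{U_2}^{p-1}\big)\leqslant c_{M_2}\big(1+\|w\|_{U_2}^{p-1}\big)$.

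Finally, for~(ii) I would verify that $M_2=\partial J_2$ is upper semicontinuous from $s\text{-}U_2$ into $w\text{-}U_2^*$. Since by~(iii) $M_2$ maps bounded sets into bounded sets and has weakly compact convex values in the reflexive space $U_2^*$, it suffices to show that its graph is sequentially closed in $(s\text{-}U_2)\times(w\text{-}U_2^*)$. So let $w_n\to w$ in $U_2$ and $\eta_n\to\eta$ weakly in $U_2^*$ with $\eta_n\in\partial J_2(w_n)$; then $\dual{\eta_n,z}{U_2}\leqslant J_2^0(w_n;z)$ for every $z\in U_2$, where $J_2^0(\cdot;\cdot)$ denotes Clarke's generalized directional derivative, and letting $n\to\infty$ while using the weak convergence of $\eta_n$ and the upper semicontinuity of $(v,z)\mapsto J_2^0(v;z)$ (a consequence of the local Lipschitz continuity of $J_2$) gives $\dual{\eta,z}{U_2}\leqslant J_2^0(w;z)$ for all $z$, i.e. $\eta\in\partial J_2(w)=M_2(w)$. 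The main obstacle is precisely this last step: upgrading a closed-graph plus local-boundedness property into genuine upper semicontinuity, and checking that the measurability assumptions in $H(j_2)$ legitimize the Aubin--Clarke representation used throughout; everything else is a routine computation. The proof of Lemma~\ref{lemma_M1} is the same, with $\Omega$ replaced by $\Gamma_2$ and with the embedding replaced by the trace operator.
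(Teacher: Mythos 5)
Your proof is correct: the paper states Lemma \ref{lemma_M2} (and Lemma \ref{lemma_M1}) without any proof, and your argument --- local Lipschitz continuity of $J_2$ via Lebourg's mean value theorem, the Aubin--Clarke characterization of $\partial J_2$ giving the pointwise bound and hence (iii) with exactly the constant $c_{M_2}=c_{j_2}2^{1/p}\max\{1,|\Omega|^{1/q}\}$ defined in the paper, and the closed-graph-plus-boundedness route to upper semicontinuity using the upper semicontinuity of $(v,z)\mapsto J_2^0(v;z)$ --- is precisely the standard argument the authors implicitly rely on (cf.\ their reference to Proposition 5.6 of \cite{HMSBOOK} for the analogous Lemma \ref{lemma_1}). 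No discrepancy to report.
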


 Let $\bar{\gamma}_i\colon {\mathcal{W}}_i\to{\mathcal{U}}_i$ be Nemytskii operator corresponding to $\gamma_i$ defined by $(\bar{\gamma}_iv)(t)=\gamma_iv(t)$
 for all $v\in {\mathcal{W}}_i$, $i=1,2$. The following lemmata deal with the properties of $\bar{\gamma}_1$ and $\bar{\gamma}_2$.

\begin{Lemma}\label{lemma_gamma1}
  The Nemytskii operator $\bar{\gamma}_1\colon M^{p,q}(0,T;W_1,W_1^*)\to L^q(0,T;U_1^*)$ is compact.
\end{Lemma}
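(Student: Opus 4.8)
The plan is to realize $\bar\gamma_1$ as a composition of a single compact embedding, supplied by Proposition~\ref{kalita_compactness}, with two bounded linear maps, and then invoke the fact that a compact operator composed with bounded operators is compact. The only genuinely delicate point is the choice of an auxiliary space lying between $W_1$ and $W_1^*$ on which the trace still makes sense and into which $W_1$ embeds \emph{compactly}; here the regularity gained in the trace theorem is exactly what makes things work.

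Concretely, first I would fix an exponent $s$ with $1/p<s<1$ (possible since $p\ge2$) and put $X_1:=W_1$, $X_2:=W^{s,p}(\Omega)$, $X_3:=W_1^*$. On a bounded Lipschitz domain the embedding $W^{1,p}(\Omega)\hookrightarrow W^{s,p}(\Omega)$ is compact for $s<1$, and since $W_1$ is a closed (hence reflexive) subspace of $W^{1,p}(\Omega)$, the embedding $X_1\subset X_2$ is compact; on the other hand $W^{s,p}(\Omega)\hookrightarrow L^p(\Omega)\hookrightarrow L^2(\Omega)=H\hookrightarrow W_1^*$ is a chain of continuous embeddings (using $p\ge2$, $|\Omega|<\infty$ and density of $W_1$ in $H$), so $X_2\subset X_3$ is continuous. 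Proposition~\ref{kalita_compactness} then gives that $M^{p,q}(0,T;W_1,W_1^*)\hookrightarrow L^p(0,T;W^{s,p}(\Omega))$ is compact. Next, because $s>1/p$, the trace operator maps $W^{s,p}(\Omega)$ boundedly into $U_1$, so its Nemytskii operator is a bounded linear map $L^p(0,T;W^{s,p}(\Omega))\to L^p(0,T;U_1)$. Finally, since $U_1$ is an $L^p$-space over a set of finite measure and $q\le p$, one has a continuous embedding $U_1\hookrightarrow U_1^*$, and together with the finiteness of the time interval this yields a continuous embedding $L^p(0,T;U_1)\hookrightarrow L^q(0,T;U_1^*)$. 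Composing the three maps — the compact embedding followed by the two bounded operators — shows that $\bar\gamma_1\colon M^{p,q}(0,T;W_1,W_1^*)\to L^q(0,T;U_1^*)$ is compact.

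I expect the main obstacle to be precisely the interpolation between $W_1$ and $W_1^*$: one cannot apply Proposition~\ref{kalita_compactness} with the naive choice $X_2=H=L^2(\Omega)$, since the trace is not defined on $L^2(\Omega)$, and a direct pointwise‑in‑time argument based only on strong $L^2$‑convergence would require a somewhat awkward multiplicative trace inequality. Choosing the fractional order $s\in(1/p,1)$ resolves this cleanly, as it is exactly the range in which $W_1\hookrightarrow W^{s,p}(\Omega)$ remains compact while the trace on $W^{s,p}(\Omega)$ stays bounded. The remaining ingredients — reflexivity of $W_1$, the finite‑measure $L^p\hookrightarrow L^q$ embeddings, and the boundedness of Nemytskii operators associated with bounded linear maps on Bochner spaces — are routine and require no further comment.
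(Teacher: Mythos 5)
Your proof is correct and follows essentially the same route as the paper: both apply Proposition~\ref{kalita_compactness} with an intermediate fractional Sobolev space $W^{s,p}(\Omega)$ (the paper takes $s=1-\varepsilon$ with $\varepsilon\in(0,\tfrac12)$, you take any $s\in(1/p,1)$) into which $W_1$ embeds compactly while the trace remains bounded into $L^p(\partial\Omega)$, and then compose the compact embedding with the bounded trace Nemytskii operator. Your additional remark on passing from $L^p(0,T;U_1)$ to the stated codomain $L^q(0,T;U_1^*)$ via $U_1\hookrightarrow U_1^*$ on a finite-measure boundary is a point the paper glosses over, and is a welcome clarification.
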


\begin{proof}
  Let $\varepsilon\in (0,\frac{1}{2})$. Then the embedding $i\colon W_1\to
  W^{1-\varepsilon,p}(\Omega)$ is compact.
  The trace operator $\tilde{\gamma}_1\colon W^{1-\varepsilon,p}(\Omega)\to W^{\frac{1}{2}-\varepsilon,p}(\partial\Omega)$
  is linear and continuous and, finally, the embedding
  $j\colon W^{\frac{1}{2}-\varepsilon,p}(\partial\Omega)\to L^p(\partial\Omega)=U_1$ is also linear and continuous.
  Thus $\gamma_1=j\circ\tilde{\gamma}_1\circ i$ is linear, continuous and compact. Moreover, the spaces
  $V_1\subset W^{1-\varepsilon,p}(\Omega)\subset V_1^*$ satisfy assumptions of Proposition \ref{kalita_compactness} so the embedding
  $M^{p,q}(0,T;V_1,V_1^*)\subset L^p(0,T;W^{1-\varepsilon,p}(\Omega))$ is compact.
  Since the embedding $L^p(0,T;W^{1-\varepsilon,p}(\Omega))\subset \mathcal{U}_1$
  is continuous the Nemytskii operator corresponding to $\gamma_1$ is compact.
\end{proof}

\begin{Lemma}\label{lemma_gamma2}
  The Nemytskii operator $\bar{\gamma}_2\colon M^{p,q}(0,T;W_2,W_2^*)\to L^q(0,T;U_2^*)$ is compact.
\end{Lemma}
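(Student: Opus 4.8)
The plan is to follow the scheme of the proof of Lemma~\ref{lemma_gamma1}, noting that the present case is in fact simpler: since $\gamma_2$ is the Rellich--Kondrachov embedding rather than a trace operator, no intermediate fractional Sobolev space is needed and the space $L^p(\Omega)$ itself can be used as the pivot in Proposition~\ref{kalita_compactness}. First I would record that $\gamma_2\colon W_2=W_0^{1,p}(\Omega)\to U_2=L^p(\Omega)$ is a compact embedding. This is the Rellich--Kondrachov theorem on the bounded Lipschitz domain $\Omega$, valid for every $p\in[2,\infty)$ because $p$ is strictly below the Sobolev conjugate exponent in all three regimes ($p<N$, $p=N$, $p>N$); restricting to the closed subspace $W_0^{1,p}(\Omega)$ preserves the compactness.

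Next I would check that the triple $W_2\subset L^p(\Omega)\subset W_2^*$ satisfies the hypotheses of Proposition~\ref{kalita_compactness}: the space $W_2$ is reflexive since $1<p<\infty$; the embedding $W_2\subset L^p(\Omega)$ is compact by the previous step; and the embedding $L^p(\Omega)\subset W_2^*=W^{-1,q}(\Omega)$ is continuous, which follows from the Gelfand triple $W_0^{1,p}(\Omega)\subset L^2(\Omega)\subset W^{-1,q}(\Omega)$ together with the continuous inclusion $L^p(\Omega)\subset L^2(\Omega)$ (both valid because $p\geqslant 2$ and $\Omega$ is bounded). Applying Proposition~\ref{kalita_compactness} with $X_1=W_2$, $X_2=L^p(\Omega)$, $X_3=W_2^*$ then yields that the embedding
\[
  M^{p,q}(0,T;W_2,W_2^*)\subset L^p(0,T;L^p(\Omega))=\mathcal{U}_2
\]
is compact.

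Finally I would observe that, on the finite-measure set $(0,T)\times\Omega$ and since $p\geqslant 2\geqslant q$, the embedding $\mathcal{U}_2=L^p(0,T;L^p(\Omega))\subset L^q(0,T;L^q(\Omega))=L^q(0,T;U_2^*)$ is continuous. Because $(\bar{\gamma}_2 v)(t)=\gamma_2 v(t)=v(t)$ regarded as an element of $L^p(\Omega)$, the operator $\bar{\gamma}_2$ is precisely the composition of the compact embedding $M^{p,q}(0,T;W_2,W_2^*)\hookrightarrow\mathcal{U}_2$ with this continuous embedding into $L^q(0,T;U_2^*)$; a continuous map composed with a compact map is compact, which gives the claim.

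I do not expect a genuine obstacle here. The only points that require a line of justification are the continuity of $L^p(\Omega)\subset W^{-1,q}(\Omega)$ — that is, making the choice of the pivot space $L^2(\Omega)$ explicit so that Proposition~\ref{kalita_compactness} is applicable — and the passage from $L^p$ to $L^q$ in the time variable, both of which are routine consequences of the finiteness of the measure and the standing assumption $p\geqslant 2$. Everything else is a direct invocation of Rellich--Kondrachov and Proposition~\ref{kalita_compactness}, in complete analogy with Lemma~\ref{lemma_gamma1}.
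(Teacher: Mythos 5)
Your argument is correct and follows essentially the same route as the paper, whose entire proof is a one-line invocation of Proposition~\ref{kalita_compactness}; the only cosmetic difference is that you take the pivot space $X_2=U_2=L^p(\Omega)$ and then embed $L^p$ into $L^q$ in time, whereas the paper applies the proposition directly to the triple $W_2$, $U_2^*$, $W_2^*$. Your version has the mild advantage of first producing compactness into $\mathcal{U}_2=L^p(0,T;U_2)$, which is the convergence actually used later in the proof of Theorem~\ref{theorem_1}.
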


\begin{proof}
  Use directly Proposition \ref{kalita_compactness} to the triple of spaces $W_2$, $U_2^*$ and $W_2^*$.
\end{proof}

  Now we impose additional assumptions on the constants of the problems.\\

\noindent $\underline{H_0^1}$: $\alpha_1>c_{M_1}\|\gamma_1\|^p_{\mathcal{L}(W_1,U_1)}$,\\

\noindent $\underline{H_0^2}$: $\alpha_2>c_{M_2}\|\gamma_2\|^p_{\mathcal{L}(W_2,U_2)}$.\\

   We are in a position to formulate the existence results for Problems~$P_1$ and~$P_2$.
\begin{Theorem}
  Let assumptions $H(g)$, $H(j_1)$, $H(f_1)$, $H_0^1$ ($H(j_2)$, $H(f_2)$, $H_0^2$, respectively) hold and $u_0\in V, v_0\in H$.
  Then Problem $P_1$ (Problem  $P_2$,
  respectively) admits a weak solution.
\end{Theorem}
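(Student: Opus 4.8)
The plan is to recognize each of Problems~$P_1$ and~$P_2$ as a particular instance of the abstract Problem~$\mathcal{P}$ of Section~\ref{Statement} and then to invoke Theorem~\ref{theorem_1}, concluding via Remark~\ref{Remark_1}. I describe the argument for Problem~$P_1$; Problem~$P_2$ is treated identically. For $P_1$ I would take $W=W_1$, $V=V_1$, $H=L^2(\Omega)$, $U=U_1=L^p(\Gamma_2)$, let $A=A_1$, decompose $B_1=B_0+C_1$ where $B_0$ is the bounded linear operator on $V_1$ given by $\langle B_0 u,v\rangle=\int_\Omega\nabla u\cdot\nabla v\,dx$ and $C_1$ is induced by $v\mapsto|v|^\delta v$, let $M=M_1=\partial J_1$ and $\gamma=\gamma_1$ the trace operator, and let the source term be the element of $\mathcal{W}_1^*$ induced by $f_1$ through $F_1$ (which belongs to $L^q(0,T;W_1^*)$ by the H\"older and Poincar\'e inequalities). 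The continuous and dense embeddings $W_1\subseteq V_1\subseteq H\subseteq V_1^*\subseteq W_1^*$ hold and $W_1\hookrightarrow H$ is compact by the Rellich--Kondrachov theorem, so the structural assumptions of Section~\ref{Statement} are met; moreover, since all the operators are autonomous, the continuity-in-time conditions $H(A)(i)$ and $H(C)(i)$ hold trivially.

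The next step is to verify the operator hypotheses by quoting the lemmata already established. Conditions $H(A)(ii)$ and $H(A)(iv)$ are Lemma~\ref{lemma_A1}$(i)$ and $(iii)$, and $H(A)(iii)$ follows from Lemma~\ref{lemma_A1}$(ii)$ together with $H(g)(ii)$, with $\mu_A=\alpha_1$, $\beta=0$ and $\lambda=-|\Omega|\inf_{s\in\real}g(s)s<\infty$. Hypothesis $H(B_0)$ holds for $B_0$ because it is symmetric and, by the definition of $\|\cdot\|_{V_1}$, satisfies $\langle B_0 v,v\rangle=\|v\|_{V_1}^2$ and $\|B_0 v\|_{V_1^*}\leqslant\|v\|_{V_1}$, so that $\mu_B=\beta_B=1$. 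Hypotheses $H(M)$ and $H(\gamma)$ are exactly Lemma~\ref{lemma_M1} and Lemma~\ref{lemma_gamma1}, the latter together with the elementary linearity, continuity and compactness of $\gamma_1$. Finally, $H_0$, namely $\mu_A>c_M\|\gamma\|_{\mathcal{L}(W,U)}^p$, reduces to $\alpha_1>c_{M_1}\|\gamma_1\|_{\mathcal{L}(W_1,U_1)}^p$, which is precisely assumption $H_0^1$.

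The point requiring genuine work, which I expect to be the main obstacle, is hypothesis $H(C)$ for $C_1$; this is where the restriction $\delta\leqslant1-\frac2p$ is used, the crucial consequence being $\delta+1\leqslant2-\frac2p=\frac2q$. For the growth bound $H(C)(ii)$ one estimates, for $v\in V_1$ and $w\in W_1$ with $\|w\|_{W_1}\leqslant1$, the integral $\int_\Omega|v|^{\delta+1}|w|\,dx$ by H\"older's inequality and the Sobolev embeddings available for $W_1$ and $V_1$; since the power of $v$ is $\delta+1\leqslant\frac2q$, one obtains $\|C_1 v\|_{W_1^*}\leqslant\beta_C(1+\|v\|_{V_1}^{2/q})$, it being essential that the target space is $W_1^*$ rather than $V_1^*$ so that the weaker exponent $\frac2q$ (not $p-1$) suffices. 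For the H\"older-continuity estimate $H(C)(iii)$ one combines the elementary pointwise bound $|\,|v|^\delta v-|w|^\delta w\,|\leqslant c_\delta(|v|^\delta+|w|^\delta)|v-w|$ with H\"older's inequality and the interpolation inequality $\|v-w\|_{L^r(\Omega)}\leqslant|v-w|^{1/q}\|v-w\|_{L^{r'}(\Omega)}^{1-1/q}$, choosing $r,r'$ compatibly with the admissible range of $\delta$ and bounding the last factor by $c(\|v\|_{V_1}+\|w\|_{V_1})$; this gives the required inequality with a monotonically increasing $\alpha$. The remaining computations are routine.

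With all hypotheses verified, Theorem~\ref{theorem_1} yields a solution $u\in C([0,T];V_1)$ of Problem~$\mathcal{P}_1$, and Remark~\ref{Remark_1}, which rests on the relation between the Clarke subdifferential of the integral functional $J_1$ and the pointwise inclusion $\eta(x)\in\partial j_1(x,u'(x))$, shows that $u$ is a weak solution of Problem~$P_1$ in the sense of Definition~\ref{def_weak_1}. The argument for Problem~$P_2$ is identical, using Lemmata~\ref{lemma_A2},~\ref{lemma_M2} and~\ref{lemma_gamma2} and assumption $H_0^2$ in place of their counterparts, with $\gamma_2$ the compact embedding $W_0^{1,p}(\Omega)\subset L^p(\Omega)$.
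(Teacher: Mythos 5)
Your proposal follows exactly the paper's route: identify $P_1$ and $P_2$ as instances of the abstract Problem $\mathcal{P}$, verify the hypotheses via Lemmata \ref{lemma_A1}--\ref{lemma_gamma2} and $H_0^1$, $H_0^2$, apply Theorem \ref{theorem_1}, and conclude with Remark \ref{Remark_1}. The only difference is that you sketch the verification of $H(C)$ for the term $|u|^\delta u$ explicitly, whereas the paper delegates this to the example in \cite{Emrichh_2010}; your sketch is consistent with that reference and correct.
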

\begin{proof}
We apply Theorem \ref{theorem_1} to Problems {${{\mathcal{P}}}_1$} and {${{\mathcal{P}}}_2$}. To this end we observe that Lemmata \ref{lemma_A1} and
\ref{lemma_A2} imply that operators $A_1$ and $A_2$ satisfy assumptions corresponding to $H(A)$. It is also clear that both operators $B_1$ and $B_2$ can be
represented as a sum of linear term $B_0$ and nonlinear one $C$, which satisfy assumptions corresponding to $H(B_0)$ and $H(C)$ (see example in
\cite{Emrichh_2010}). Moreover, Lemmata \ref{lemma_M1} and  \ref{lemma_M2} provide that the multivalued operators $M_1$ and $M_2$ satisfy assumptions analogous
to $H(M)$. Similarly, Lemmata \ref{lemma_gamma1} and \ref{lemma_gamma2} guaranty that assumption $H(\gamma)$ is fulfilled in case of operators $\gamma_1$ and
$\gamma_2$. Finally, assumptions $H_0^1$ and $H_0^2$ are analogous to assumption $H_0$ of Theorem \ref{theorem_1}. Including assumptions $H(f_1)$ and $H(f_2)$,
we are in a position to use Theorem \ref{theorem_1} and obtain the existence of solution to Problems {${{\mathcal{P}}}_1$} and {${{\mathcal{P}}}_2$}. From
Remark \ref{Remark_1} and Definitions \ref{def_weak_1} and \ref{def_weak_2}, we get that Problems $P_1$ and $P_2$ admit weak solutions.
\end{proof}

\end{document}